\definecolor{webgreen}{rgb}{0,.5,0}
\definecolor{webbrown}{rgb}{.6,0,0}
\newcommand{\braces}{\genfrac{\lbrace}{\rbrace}{0pt}{}}
\begin{document}

\theoremstyle{plain}
\newtheorem{theorem}{Theorem}
\newtheorem{corollary}[theorem]{Corollary}
\newtheorem{proposition}{Proposition}
\newtheorem{lemma}{Lemma}
\newtheorem{example}{Examples}
\newtheorem{remark}{Remark}

\begin{center}
\vskip 1cm
{\LARGE\bf
Applications of an identity of Bat\i r
}

\vskip 1cm

{\large
Kunle Adegoke \\
Department of Physics and Engineering Physics, \\ Obafemi Awolowo University, 220005 Ile-Ife, Nigeria \\
\href{mailto:adegoke00@gmail.com}{\tt adegoke00@gmail.com}

\vskip 0.2 in

Robert Frontczak \\
Independent Researcher, 72764 Reutlingen \\ Germany \\
\href{mailto:robert.frontczak@web.de}{\tt robert.frontczak@web.de}
}

\end{center}

\vskip .2 in

\begin{abstract}
Based on an interesting identity of Bat\i r we derive new identities for double sums involving famous number sequences.
We also prove some double sum identities for binomial transform pairs.
\end{abstract}

\noindent 2010 {\it Mathematics Subject Classification}: Primary 05A10, Secondary 11B39, 11B68, 11B83.

\noindent \emph{Keywords:} Double sum, harmonic number, Bernoulli number, Fibonacci number.

\bigskip

\section{Introduction}

As a reaction to V\u{a}lean's Master Theorem of Series, Batir~\cite{batir18} established a generalization of this theorem. 
His proof is based on the following identity which holds for any two sequences of complex numbers $(a_k)_{k\geq 1}$ and 
$(b_k)_{k\geq 1}$ \cite[Lemma 1]{batir18}:
\begin{equation}\label{Batir}
\sum_{k=1}^n a_k \sum_{j=1}^k b_j = \sum_{p=0}^{n-1}\sum_{k=1}^{n-p} a_{p+k} b_k.
\end{equation}
 
Our purpose is to derive various double summation identities based on the following identity which is a variation of \eqref{Batir}:
\begin{equation}\label{main}
\sum_{k = 1}^n \sum_{j = 0}^{k - 1} a_{n - j} b_{k - j} = \sum_{k = 1}^n a_k \sum_{j = 1}^k b_j.
\end{equation}
Throughout the paper we use the notation $H_n^{(s)}$, $O_n^{(s)}$, $B_n$, $F_n$ and $L_n$ for generalized harmonic numbers of order $s$ with 
$H_n^{(1)}=H_n$ being the harmonic numbers, generalized odd harmonic numbers of order $s$ with $O_n^{(1)}=O_n$ being the odd harmonic numbers, 
Bernoulli numbers, Fibonacci numbers and Lucas numbers, respectively. We also use $\braces nm$ to denote Stirling numbers of the second kind, $C_n$ the Catalan numbers, and $G_n$ the generalized Fibonacci numbers, the so-called gibonacci numbers.

\section{General results}

\begin{theorem}\label{grat7c2}
If $(a_k)$, $k=1,2,3,\ldots$, is a sequence of complex numbers, then
\begin{equation}\label{sk16ujw}
\sum_{k = 1}^n \sum_{j = 0}^{k - 1} \frac{a_{n - j}}{n - j} = \sum_{k = 1}^n a_k.
\end{equation}
\end{theorem}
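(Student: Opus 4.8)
The plan is to obtain \eqref{sk16ujw} as the simplest specialization of the master identity \eqref{main}. Concretely, in \eqref{main} I would take $b_j \equiv 1$ for all $j \geq 1$ and replace the sequence $(a_k)$ by $(a_k/k)$. With this choice the inner sum on the right-hand side of \eqref{main} collapses, since $\sum_{j=1}^{k} 1 = k$, so that the right-hand side becomes $\sum_{k=1}^{n} (a_k/k)\cdot k = \sum_{k=1}^{n} a_k$, which is exactly the right-hand side of \eqref{sk16ujw}. On the left-hand side the same substitution turns $a_{n-j}\,b_{k-j}$ into $a_{n-j}/(n-j)$, producing precisely the double sum displayed in \eqref{sk16ujw}.

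The one point that deserves a line of care is that every denominator $n-j$ occurring on the left is nonzero: in the double sum the index $j$ satisfies $0 \leq j \leq k-1 \leq n-1$, hence $n-j \geq 1$, so no term of the form $a_0/0$ ever appears and the substitution is legitimate.

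As an independent check it is worth recording that \eqref{sk16ujw} can also be proved directly by interchanging the order of summation, without invoking \eqref{main}. Writing $m = n-j$, the left-hand side becomes $\sum_{k=1}^{n}\sum_{m=n-k+1}^{n} a_m/m$; for a fixed $m \in \{1,\dots,n\}$ the condition $n-k+1 \le m$ forces $k \ge n-m+1$, so $a_m/m$ is counted for exactly the $m$ values $k = n-m+1,\dots,n$, and the double sum collapses to $\sum_{m=1}^{n} (a_m/m)\cdot m = \sum_{m=1}^{n} a_m$.

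There is no genuine obstacle in this proof: the statement is an immediate consequence of \eqref{main}, and the only thing to watch is the admissibility of the index ranges noted above. The interest of the theorem lies not in its proof but in the freedom to choose $(a_k)$, which will be exploited in the subsequent applications.
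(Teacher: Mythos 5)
Your proposal is correct and coincides with the paper's own proof: substitute $b_j\equiv 1$ and $a_k\mapsto a_k/k$ in \eqref{main}, so the right-hand side collapses to $\sum_{k=1}^n a_k$. The additional direct verification by interchanging the order of summation is a nice sanity check but not a different method in substance.
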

\begin{proof}
In~\eqref{main}, set $b_k=1$ and replace $a_k$ with $a_k/k$ for $k=1,2,3,\ldots$
\end{proof}

\begin{theorem}\label{grat7}
If $(a_k)$, $k=1,2,3,\ldots$, is a sequence of complex numbers, then
\begin{equation}
\sum_{k = 1}^n \sum_{j = 0}^{k - 1} (-1)^{k-j+1} a_{n - j} = \sum_{k = 0}^{\lfloor (n-1)/2 \rfloor} a_{2k+1}.
\end{equation}
\end{theorem}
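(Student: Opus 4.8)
The plan is to specialize the master identity \eqref{main}. First I would choose $b_k=(-1)^{k+1}$ for $k\ge 1$, so that $b_{k-j}=(-1)^{k-j+1}$; with this choice the left-hand side of \eqref{main} becomes exactly the double sum $\sum_{k=1}^n\sum_{j=0}^{k-1}(-1)^{k-j+1}a_{n-j}$ appearing in the statement. It then remains to simplify the right-hand side $\sum_{k=1}^n a_k\sum_{j=1}^k b_j$.

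Next I would evaluate the inner sum $\sum_{j=1}^k(-1)^{j+1}=1-1+1-\cdots$. This is a finite geometric (equivalently, telescoping) sum equal to $\tfrac12\bigl(1-(-1)^k\bigr)$, which is $1$ when $k$ is odd and $0$ when $k$ is even. Hence the right-hand side of \eqref{main} collapses to $\sum_{k\ \mathrm{odd},\ 1\le k\le n} a_k$.

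Finally I would reindex the odd values of $k$ in $\{1,\dots,n\}$ by writing $k=2m+1$ with $m=0,1,2,\dots$, so that the sum becomes $\sum_{m=0}^{M}a_{2m+1}$, where $M$ is determined by the largest odd integer not exceeding $n$. A short check in the two parities shows $M=\lfloor(n-1)/2\rfloor$ in both cases (for $n$ odd the top index is $n$, for $n$ even it is $n-1$), which matches the claimed upper limit. There is essentially no obstacle in this argument; the only point that warrants a moment's care is confirming that the upper limit $\lfloor(n-1)/2\rfloor$ is correct uniformly in the parity of $n$.
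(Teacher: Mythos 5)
Your proposal is correct and follows essentially the same route as the paper: specialize \eqref{main} with an alternating choice of $b_k$ and evaluate the inner sum $\sum_{j=1}^k b_j$ as a two-valued expression depending on the parity of $k$. The only (immaterial) difference is that you take $b_k=(-1)^{k+1}$, which matches the sign in the statement directly, whereas the paper takes $b_k=(-1)^k$ and absorbs the resulting minus sign at the end.
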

\begin{proof}
In~\eqref{main}, set $b_k=(-1)^k$ and use
$$\sum_{j=1}^k (-1)^j = \frac{1}{2} \left ((-1)^k - 1 \right ).$$
\end{proof}

\begin{proposition}
If $n$ is a non-negative integer, then
\begin{align}
\sum_{k = 1}^n \sum_{j = 0}^{k - 1} \frac{1}{(n - j)^2} = H_n, \\ 
\sum_{k = 1}^n \sum_{j = 0}^{k - 1} \frac{1}{(n - j)\left(2(n - j) - 1 \right)} = O_n.
\end{align}
\end{proposition}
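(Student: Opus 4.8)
The plan is to specialize Theorem~\ref{grat7c2}, which already delivers exactly the left-hand structure $\sum_{k=1}^n\sum_{j=0}^{k-1}\frac{a_{n-j}}{n-j}$ in terms of $\sum_{k=1}^n a_k$. So the whole task reduces to choosing the sequence $(a_k)$ whose ``telescoped'' partial sum is the target harmonic-type number, and checking that the resulting summand matches the one displayed.

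For the first identity I would take $a_k = 1/k$. Then $\dfrac{a_{n-j}}{n-j} = \dfrac{1}{(n-j)^2}$, which is precisely the summand on the left, and Theorem~\ref{grat7c2} gives the value $\sum_{k=1}^n a_k = \sum_{k=1}^n \frac1k = H_n$. For the second identity I would take $a_k = \dfrac{1}{2k-1}$, so that $\dfrac{a_{n-j}}{n-j} = \dfrac{1}{(n-j)\bigl(2(n-j)-1\bigr)}$ matches the displayed summand, and Theorem~\ref{grat7c2} now yields $\sum_{k=1}^n a_k = \sum_{k=1}^n \dfrac{1}{2k-1} = O_n$, the $n$-th odd harmonic number. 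The edge case $n=0$ is immediate since both double sums are empty and $H_0 = O_0 = 0$.

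There is essentially no obstacle here: once Theorem~\ref{grat7c2} is in hand, both assertions are one-line substitutions, and the only thing to be careful about is recording the two choices $a_k = 1/k$ and $a_k = 1/(2k-1)$ correctly and recognizing the resulting sums as $H_n$ and $O_n$ under the paper's notation.
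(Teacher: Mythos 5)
Your proposal is correct and follows exactly the paper's own argument: substitute $a_k = 1/k$ and $a_k = 1/(2k-1)$ into Theorem~\ref{grat7c2} and recognize the resulting sums as $H_n$ and $O_n$. Nothing further is needed.
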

\begin{proof}
Use $a_k=1/k$ and $a_k=(2k-1)^{-1}$ in Theorem~\ref{grat7c2}.
\end{proof}

\begin{proposition}
If $n$ is a non-negative integer, then
\begin{equation}
\sum_{k = 1}^n \sum_{j = 0}^{k - 1} (-1)^{k-j+1} (n-j) = \left ( \Big\lfloor \frac{n-1}{2}\Big\rfloor + 1 \right )^2. 
\end{equation}
\end{proposition}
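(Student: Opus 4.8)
The plan is to invoke Theorem~\ref{grat7} with the specific choice $a_k = k$ for $k = 1, 2, 3, \ldots$. With that substitution the sequence entry $a_{n-j}$ becomes $n - j$, so the left-hand side of the identity in Theorem~\ref{grat7} becomes precisely the left-hand side of the claim. Hence the double sum collapses immediately to $\sum_{k=0}^{\lfloor (n-1)/2\rfloor} a_{2k+1} = \sum_{k=0}^{\lfloor (n-1)/2\rfloor} (2k+1)$.

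The remaining step is the elementary evaluation of a sum of consecutive odd integers, namely $\sum_{k=0}^{m}(2k+1) = (m+1)^2$. Taking $m = \lfloor (n-1)/2\rfloor$ then yields $\left(\lfloor (n-1)/2\rfloor + 1\right)^2$, which is exactly the right-hand side of the proposition, completing the argument.

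I do not expect a genuine obstacle here: the hypotheses of Theorem~\ref{grat7} allow an arbitrary sequence of complex numbers, so $a_k = k$ is admissible, and the index ranges already match (the outer sum over $k$ runs from $1$ to $n$ in both places, and the inner sum over $j$ runs from $0$ to $k-1$). The only minor care needed is to confirm the sign pattern $(-1)^{k-j+1}$ is carried verbatim from Theorem~\ref{grat7}. One could instead prove the identity from scratch by interchanging the order of summation and setting $i = k - j$, reducing the inner sum to an arithmetic series, but routing through Theorem~\ref{grat7} is shorter and more in keeping with the style of the preceding propositions.
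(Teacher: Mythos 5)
Your proposal is correct and matches the paper's own proof exactly: the authors also apply Theorem~\ref{grat7} with $a_k = k$ and then evaluate $\sum_{k=0}^{m}(2k+1) = (m+1)^2$ with $m = \lfloor (n-1)/2\rfloor$. Nothing further is needed.
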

\begin{proof}
Use $a_k=k$ in Theorem~\ref{grat7} and 
$$\sum_{k=0}^n (2k+1) = (n+1)^2.$$
\end{proof}

\begin{proposition}
If $n$ is a non-negative integer, then
\begin{equation}\label{oddhar_eq1}
\sum_{k = 1}^n \sum_{j = 0}^{k - 1} (-1)^{k-j+1}\frac{1}{n-j} = O_{\lfloor (n-1)/2 \rfloor}. 
\end{equation}
\end{proposition}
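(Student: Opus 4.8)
The plan is to obtain \eqref{oddhar_eq1} as the special case $a_k=1/k$ of Theorem~\ref{grat7}. First I would substitute $a_k=1/k$, $k=1,2,3,\ldots$, into the identity of that theorem; then its left-hand side becomes literally the left-hand side of \eqref{oddhar_eq1}, so the whole problem collapses to evaluating the right-hand side
$$\sum_{k=1}^n\sum_{j=0}^{k-1}(-1)^{k-j+1}\frac{1}{n-j}=\sum_{k=0}^{\lfloor(n-1)/2\rfloor}\frac{1}{2k+1}.$$

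The second step is to recognize the single sum on the right as a partial sum of the reciprocals of the odd positive integers. Writing $m=\lfloor(n-1)/2\rfloor$ and reindexing by $i=k+1$,
$$\sum_{k=0}^{m}\frac{1}{2k+1}=\sum_{i=1}^{m+1}\frac{1}{2i-1}=O_{m+1},$$
by the definition of the odd harmonic numbers, and $m+1=\lfloor(n-1)/2\rfloor+1=\lceil n/2\rceil$. This expresses the double sum as an odd harmonic number, which is the assertion.

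Since the argument is just a one-substitution specialization of Theorem~\ref{grat7} followed by a reindexing, I do not expect any genuine difficulty. The only place an error could sneak in — and hence the step I would double-check — is the bookkeeping for the upper limit of the single sum and the resulting subscript on $O$; evaluating both sides directly at $n=1,2,3$ settles it.
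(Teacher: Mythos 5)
Your route is exactly the paper's: specialize Theorem~\ref{grat7} at $a_k=1/k$, then reindex the resulting partial sum of odd reciprocals, and your reindexing $\sum_{k=0}^{m}\frac{1}{2k+1}=\sum_{i=1}^{m+1}\frac{1}{2i-1}=O_{m+1}$ is correct.

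However, what you obtain, $O_{\lfloor(n-1)/2\rfloor+1}=O_{\lceil n/2\rceil}$, is \emph{not} the right-hand side printed in \eqref{oddhar_eq1}, which reads $O_{\lfloor(n-1)/2\rfloor}$. With the convention $O_m=\sum_{i=1}^{m}1/(2i-1)$ that the paper uses consistently elsewhere (e.g.\ in the proposition evaluating to $O_n$ and in the corollary with $O_{(n+1)/2}$), the printed statement is off by one: at $n=1$ the double sum equals $1$ while $O_{\lfloor 0/2\rfloor}=O_0=0$, and at $n=3$ the sum is $4/3=O_2$, not $O_1=1$. So your computation yields the corrected identity, and the closing claim ``which is the assertion'' glosses over the discrepancy; the proof should end by recording that the subscript in \eqref{oddhar_eq1} must be $\lfloor(n-1)/2\rfloor+1=\lceil n/2\rceil$. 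This is precisely the bookkeeping point you flagged for double-checking, and the numerical check at $n=1,2,3$ you proposed does expose it -- it shows the error lies in the paper's stated subscript, not in your derivation.
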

\begin{proof}
Use $a_k=1/k$ in Theorem~\ref{grat7}.
\end{proof}

\begin{proposition}
If $n$ is a non-negative integer, then
\begin{equation}
\sum_{k = 1}^n \sum_{j = 0}^{k - 1} (-1)^{k-j+1} F_{n-j} = F_{2(\lfloor (n-1)/2 \rfloor + 1)} 
\end{equation}
and
\begin{equation}
\sum_{k = 1}^n \sum_{j = 0}^{k - 1} (-1)^{k-j+1} L_{n-j} = L_{2(\lfloor (n-1)/2 \rfloor + 1)} - 2.
\end{equation}
\end{proposition}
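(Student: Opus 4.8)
The plan is to apply Theorem~\ref{grat7} directly, once with $a_k=F_k$ and once with $a_k=L_k$, which reduces each double sum to a single sum of odd-indexed Fibonacci (resp.\ Lucas) numbers; the remaining single sums are then evaluated by a short telescoping argument.

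First I would set $a_k=F_k$ in Theorem~\ref{grat7}. This immediately identifies the left-hand side of the first identity with $\sum_{k=0}^{\lfloor(n-1)/2\rfloor}F_{2k+1}$. Writing $m=\lfloor(n-1)/2\rfloor$, it remains to show $\sum_{k=0}^{m}F_{2k+1}=F_{2m+2}$. For this I would rewrite each summand via the Fibonacci recurrence as $F_{2k+1}=F_{2k+2}-F_{2k}$, so that the sum telescopes to $F_{2m+2}-F_0=F_{2m+2}$; substituting $m=\lfloor(n-1)/2\rfloor$ back in then gives the claimed $F_{2(\lfloor(n-1)/2\rfloor+1)}$.

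The Lucas case is entirely parallel: setting $a_k=L_k$ in Theorem~\ref{grat7} reduces the left-hand side to $\sum_{k=0}^{m}L_{2k+1}$ with the same $m=\lfloor(n-1)/2\rfloor$, and writing $L_{2k+1}=L_{2k+2}-L_{2k}$ makes the sum telescope to $L_{2m+2}-L_0$. Here one uses $L_0=2$, which produces the correction term and yields $L_{2(\lfloor(n-1)/2\rfloor+1)}-2$.

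I do not anticipate a genuine obstacle; the two identities are essentially instances of Theorem~\ref{grat7} combined with the classical sums of odd-indexed Fibonacci and Lucas numbers. The only points requiring a little care are the bookkeeping of the floor-function summation limit $\lfloor(n-1)/2\rfloor$ and remembering the initial value $L_0=2$ when telescoping the Lucas sum.
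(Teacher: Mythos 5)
Your proposal is correct and matches the paper's proof: both apply Theorem~\ref{grat7} with $a_k=F_k$ and $a_k=L_k$ and then use the classical evaluations $\sum_{k=0}^{m}F_{2k+1}=F_{2(m+1)}$ and $\sum_{k=0}^{m}L_{2k+1}=L_{2(m+1)}-2$. The only difference is that you supply a short telescoping derivation of these two standard sums (using $F_0=0$ and $L_0=2$), which the paper simply quotes as known.
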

\begin{proof}
Use $a_k=F_k$ and $a_k=L_k$ in Theorem~\ref{grat7} and combine with
$$\sum_{k=0}^n F_{2k+1} = F_{2(n+1)}\qquad\text{and}\qquad \sum_{k=0}^n L_{2k+1} = L_{2(n+1)}-2.$$
\end{proof}

\begin{proposition}
Let $f(k)$ be any well defined function. If $n$ is a non-negative integer, then
\begin{equation}
\sum_{k = 1}^n \sum_{j = 0}^{k - 1} (-1)^{k-j} f(n-j) B_{n-j} = \frac{f(1)}{2}. 
\end{equation}
\end{proposition}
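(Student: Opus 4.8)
The plan is to deduce this immediately from Theorem~\ref{grat7}, the only substantive input being the classical vanishing of the odd-indexed Bernoulli numbers. First I would set $a_k = f(k)\,B_k$ in Theorem~\ref{grat7}; since $f$ is an arbitrary well-defined function, $(a_k)_{k\ge 1}$ is an arbitrary sequence of complex numbers, so the theorem applies verbatim and gives
$$\sum_{k=1}^n \sum_{j=0}^{k-1} (-1)^{k-j+1} f(n-j)\,B_{n-j} = \sum_{k=0}^{\lfloor (n-1)/2\rfloor} f(2k+1)\,B_{2k+1}.$$

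Next I would invoke the standard fact that $B_{2k+1}=0$ for every integer $k\ge 1$, together with $B_1=-\tfrac12$. Since $n\ge 1$, the summation range on the right always contains the index $k=0$, and every term with $k\ge 1$ vanishes, so the right-hand side collapses to the single surviving term $f(1)\,B_1 = -\tfrac{f(1)}{2}$. Finally I would reconcile the sign: the statement carries the factor $(-1)^{k-j}$ while Theorem~\ref{grat7} produces $(-1)^{k-j+1}$, so multiplying the displayed identity by $-1$ yields
$$\sum_{k=1}^n \sum_{j=0}^{k-1} (-1)^{k-j} f(n-j)\,B_{n-j} = \frac{f(1)}{2},$$
which is exactly the claim.

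There is essentially no genuine obstacle in this argument; it is a one-line specialization. The only points that demand a little care are fixing the Bernoulli convention ($B_1=-\tfrac12$, not $+\tfrac12$, which is what makes the right-hand side come out as $+\tfrac{f(1)}{2}$) and noting that one needs $n\ge 1$ so that the index $k=0$ — equivalently the summand $f(1)B_1$ — actually appears in the collapsed sum; for $n=0$ the left-hand side is an empty sum and the identity degenerates.
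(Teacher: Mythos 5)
Your proposal is correct and follows exactly the paper's argument: specialize Theorem~\ref{grat7} to $a_k=f(k)B_k$ and use $B_{2k+1}=0$ for $k\ge 1$ so that only the term $f(1)B_1=-f(1)/2$ survives, then flip the sign. Your extra remarks on the convention $B_1=-\tfrac12$ and on the degenerate case $n=0$ are sensible bookkeeping but do not change the route.
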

In particular,
\begin{equation}
\sum_{k = 1}^n \sum_{j = 0}^{k - 1} (-1)^{k-j} B_{n-j}^2 = -\frac{1}{4}. 
\end{equation}
\begin{proof}
Use $a_k=f(k)B_k$ in Theorem~\ref{grat7} and the fact that $B_{2k+1}=0$ for $k\geq 1$.
\end{proof}
Our next result gives a double sum definition of Bernoulli numbers, expressed in terms of the Stirling numbers of the second kind.
\begin{proposition}
If $n$ is a positive integer, then
\begin{equation}
\sum_{k = 1}^n {\sum_{j = 0}^{k - 1} {( - 1)^{n - j} \frac{{\left( {n - j - 1} \right)!}}{{n - j + 1}}\braces{{ n}}{{n - j}}} }  = B_n .
\end{equation}
\end{proposition}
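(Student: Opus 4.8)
The plan is to collapse the double sum to a single sum by a direct application of Theorem~\ref{grat7c2}, and then recognize that single sum as a classical closed form for the Bernoulli numbers. Fix the positive integer $n$ and define the sequence $a_k = (-1)^k \dfrac{k!}{k+1}\braces{n}{k}$ for $k=1,2,3,\ldots$. The appearance of $n$ inside $a_k$ is harmless: in Theorem~\ref{grat7c2} the sequence $(a_k)$ is arbitrary and $n$ is merely the (fixed) upper limit, so we are free to let $a_k$ depend on it.

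First I would verify the bookkeeping identity
$$\frac{a_{n-j}}{n-j} = (-1)^{n-j}\,\frac{(n-j)!}{(n-j)(n-j+1)}\braces{n}{n-j} = (-1)^{n-j}\,\frac{(n-j-1)!}{n-j+1}\braces{n}{n-j},$$
which is exactly the summand on the left-hand side of the proposition. Plugging this choice of $(a_k)$ into Theorem~\ref{grat7c2} then gives immediately
$$\sum_{k = 1}^n \sum_{j = 0}^{k - 1} (-1)^{n - j}\,\frac{(n - j - 1)!}{n - j + 1}\braces{n}{n - j} = \sum_{k=1}^n a_k = \sum_{k=1}^n \frac{(-1)^k k!}{k+1}\braces{n}{k}.$$

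It then remains to show that $\sum_{k=1}^n \dfrac{(-1)^k k!}{k+1}\braces{n}{k} = B_n$. This is the well-known Stirling-number representation of the Bernoulli numbers, $B_n = \sum_{k=0}^{n} \dfrac{(-1)^k k!}{k+1}\braces{n}{k}$; since $\braces{n}{0}=0$ for $n\ge 1$, the $k=0$ term vanishes and the two sums agree. If one wishes to include a derivation rather than a citation, it follows in a couple of lines from the exponential generating function $\sum_{n\ge k}\braces{n}{k}\frac{t^n}{n!}=\frac{(e^t-1)^k}{k!}$ together with $\sum_{k\ge 0}\frac{u^k}{k+1} = -\frac{\ln(1-u)}{u}$ evaluated at $u=1-e^t$, which reproduces $\frac{t}{e^t-1}$. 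I do not expect any genuine obstacle here: the combinatorial rearrangement is routine once Theorem~\ref{grat7c2} is invoked, and the only substantive ingredient is recalling this classical formula for $B_n$.
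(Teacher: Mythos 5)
Your proposal is correct and follows essentially the same route as the paper: take $a_k = (-1)^k \frac{k!}{k+1}\braces{n}{k}$ in Theorem~\ref{grat7c2} and invoke the classical Stirling-number formula $B_n=\sum_{k=1}^n \frac{(-1)^k k!}{k+1}\braces{n}{k}$ (which the paper cites rather than rederives). Your extra remarks about the dependence of $a_k$ on $n$ and the generating-function derivation are fine but not needed.
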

\begin{proof}
Use the well-known result (see~\cite{guo15} for annotated references):
\begin{equation}
B_n  = \sum_{k = 1}^n {\frac{{( - 1)^k k!}}{{k + 1}}\braces{{ n}}{k}} 
\end{equation}
in Theorem~\ref{grat7c2}.
\end{proof}
\begin{theorem}
If $(a_k)$, $k=1,2,3,\ldots$, is a sequence of complex numbers, $r$ is a complex number that is not a negative integer, then
\begin{equation}\label{n1e84dx}
\sum_{k = 1}^n \sum_{j = 0}^{k - 1} \frac{a_{n - j}}{k - j + r} = \sum_{k = 1}^n a_k H_{k + r} - H_r \sum_{k = 1}^n a_k.
\end{equation}
In particular, we have the following double sum identities
\begin{align}
\sum_{k = 1}^n \sum_{j = 0}^{k - 1} \frac{a_{n - j}}{k - j} &= \sum_{k = 1}^n {a_k H_k} \label{rzv2imh},\\ 
\sum_{k = 1}^n \sum_{j = 0}^{k - 1} \frac{a_{n - j}}{2k - 2j - 1} &= \sum_{k = 1}^n {a_k O_k } \label{ml4vyu4} .
\end{align}
\end{theorem}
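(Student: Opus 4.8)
The plan is to specialize the master identity \eqref{main} by a suitable choice of the auxiliary sequence $(b_k)$, in the same spirit as the proof of Theorem~\ref{grat7c2}. Concretely, I would set $b_k=\dfrac{1}{k+r}$ for $k\ge 1$; this is legitimate precisely because $r$ is assumed not to be a negative integer, so no denominator vanishes. With this choice the left-hand side of \eqref{main} becomes $\sum_{k=1}^n\sum_{j=0}^{k-1} a_{n-j}\,b_{k-j}=\sum_{k=1}^n\sum_{j=0}^{k-1}\dfrac{a_{n-j}}{(k-j)+r}$, which is exactly the left-hand side of \eqref{n1e84dx}.

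Next I would evaluate the inner sum on the right-hand side of \eqref{main}, namely $\sum_{j=1}^k b_j=\sum_{j=1}^k\dfrac{1}{j+r}$. The key (and essentially the only) fact needed is the standard evaluation
$$\sum_{j=1}^k \frac{1}{j+r}=H_{k+r}-H_r,$$
where $H_x$ denotes the generalized harmonic number $H_x=\psi(x+1)+\gamma=\sum_{m\ge 1}\bigl(\tfrac1m-\tfrac1{m+x}\bigr)$, which reduces to the ordinary $H_k$ when $r=0$. Inserting this into the right-hand side of \eqref{main} gives $\sum_{k=1}^n a_k\,(H_{k+r}-H_r)$, and splitting the sum yields $\sum_{k=1}^n a_k H_{k+r}-H_r\sum_{k=1}^n a_k$, which is the right-hand side of \eqref{n1e84dx}.

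The two displayed special cases then follow by a judicious choice of $r$. Taking $r=0$ and using $H_0=0$ gives \eqref{rzv2imh} at once. Taking $r=-\tfrac12$ turns $\dfrac{1}{(k-j)+r}$ into $\dfrac{2}{2(k-j)-1}$, and since $H_{k-1/2}-H_{-1/2}=\sum_{j=1}^k\dfrac{2}{2j-1}=2O_k$, the resulting identity is \eqref{ml4vyu4} after cancelling the common factor $2$ from both sides (equivalently, after rescaling the sequence $a_k$).

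I do not anticipate a serious obstacle; the only point that deserves a word of care is the meaning of $H_{k+r}$ for non-integral $r$ together with the justification of $\sum_{j=1}^k\tfrac{1}{j+r}=H_{k+r}-H_r$, which I would either cite or dispatch by a one-line induction on $k$ using the recurrence $H_x=H_{x-1}+\tfrac1x$. Everything else is a direct substitution into \eqref{main}.
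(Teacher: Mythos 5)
Your proposal is correct and follows exactly the paper's route: substitute $b_k=1/(k+r)$ into \eqref{main}, evaluate $\sum_{j=1}^k\frac{1}{j+r}=H_{k+r}-H_r$, and then specialize to $r=0$ and $r=-1/2$ for \eqref{rzv2imh} and \eqref{ml4vyu4}. Your extra remarks on the meaning of $H_{k+r}$ for non-integral $r$ and on the cancellation of the factor $2$ in the $r=-1/2$ case are just the details the paper leaves implicit.
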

\begin{proof}
Use $b_j=1/(j+r)$ in~\eqref{main}. Setting $r=0$ in~\eqref{n1e84dx} gives~\eqref{rzv2imh} while $r=-1/2$ produces~\eqref{ml4vyu4}.
\end{proof}

\begin{proposition}
If $n$ is a positive integer, then
\begin{equation}
\sum_{k = 1}^n {\sum_{j = 0}^{k - 1} {\frac{{( - 1)^{n - j} \left( {n - j} \right)!}}{{\left( {n - j + 1} \right)\left( {k - j} \right)}}\braces{{ n}}{{n - j}}} }  =  - \frac{n}{2}B_{n - 1} .
\end{equation}
\end{proposition}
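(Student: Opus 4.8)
The plan is to recognize the double sum on the left as a special case of identity~\eqref{rzv2imh}. With $n$ fixed, put $a_m=\dfrac{(-1)^m m!}{m+1}\braces{n}{m}$ for $m=1,2,\dots,n$; then the left-hand side of the statement is precisely $\sum_{k=1}^{n}\sum_{j=0}^{k-1}\dfrac{a_{n-j}}{k-j}$, so~\eqref{rzv2imh} collapses it to the single sum
\[
\sum_{k=1}^{n}a_k H_k=\sum_{k=1}^{n}\frac{(-1)^k k!}{k+1}\braces{n}{k}H_k .
\]
Hence it remains to prove the auxiliary identity
\[
S_n:=\sum_{k=1}^{n}\frac{(-1)^k k!}{k+1}\braces{n}{k}H_k=-\frac{n}{2}B_{n-1}\qquad(n\ge 1).
\]

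To establish this I would argue with exponential generating functions. Multiplying $S_n$ by $x^n/n!$, summing over $n\ge 1$, interchanging the two summations and using $\sum_{n\ge k}\braces{n}{k}x^n/n!=(e^x-1)^k/k!$ gives
\[
\sum_{n\ge 1}S_n\frac{x^n}{n!}=\sum_{k\ge 1}\frac{(-1)^k H_k}{k+1}(e^x-1)^k=\sum_{k\ge 1}\frac{H_k}{k+1}u^k,\qquad u:=1-e^x .
\]
The key step is to evaluate the ordinary generating function $\sum_{k\ge 1}\frac{H_k}{k+1}u^k$ in closed form. Starting from the classical $\sum_{k\ge 1}H_k t^k=-\ln(1-t)/(1-t)$ and integrating in $t$ from $0$ to $u$ yields $\sum_{k\ge 1}\frac{H_k}{k+1}u^{k+1}=\tfrac12\ln^2(1-u)$, so that $\sum_{k\ge 1}\frac{H_k}{k+1}u^k=\dfrac{\ln^2(1-u)}{2u}$.

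Finally I would substitute back $u=1-e^x$: since $1-u=e^x$ we get $\ln^2(1-u)=x^2$, whence
\[
\sum_{n\ge 1}S_n\frac{x^n}{n!}=\frac{x^2}{2(1-e^x)}=-\frac{x}{2}\cdot\frac{x}{e^x-1}=-\frac12\sum_{m\ge 0}B_m\frac{x^{m+1}}{m!}.
\]
Comparing the coefficients of $x^n/n!$ on both sides gives $S_n=-\tfrac{n}{2}B_{n-1}$, which together with the reduction in the first paragraph proves the proposition. The only point that needs care is the formal bookkeeping in the generating-function manipulation — the legitimacy of the interchange of summations and of the substitution $u=1-e^x$, the latter being harmless because $u$ has vanishing constant term; beyond that the argument is a routine combination of~\eqref{rzv2imh} with standard power-series identities for harmonic and Bernoulli numbers. (Alternatively, the auxiliary identity $S_n=-\tfrac n2B_{n-1}$ can be proved by induction on $n$ using the recurrence $\braces{n+1}{k}=k\braces{n}{k}+\braces{n}{k-1}$, but the generating-function route is shorter.)
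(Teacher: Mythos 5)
Your argument is correct, and its first half is exactly the paper's proof: you substitute $a_k=\frac{(-1)^k k!}{k+1}\braces{n}{k}$ into \eqref{rzv2imh} and reduce the double sum to $S_n=\sum_{k=1}^n\frac{(-1)^k k!}{k+1}\braces{n}{k}H_k$. The difference lies in how that single sum is handled: the paper simply quotes Kellner's identity $S_n=-\frac n2 B_{n-1}$ from \cite{kellner14}, whereas you prove it from scratch by exponential generating functions, using $\sum_{n\ge k}\braces{n}{k}x^n/n!=(e^x-1)^k/k!$, the evaluation $\sum_{k\ge1}\frac{H_k}{k+1}u^k=\frac{\ln^2(1-u)}{2u}$ (obtained by integrating $\sum_{k\ge1}H_kt^k=-\ln(1-t)/(1-t)$), and the substitution $u=1-e^x$, which collapses everything to $-\frac x2\cdot\frac{x}{e^x-1}$; comparing coefficients of $x^n/n!$ then gives $S_n=-\frac n2 B_{n-1}$. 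I checked the manipulations (including small cases $n=1,2,3$ with $B_1=-\tfrac12$) and they are sound; the interchange of summations and the composition with $u=1-e^x$ are legitimate as formal power series since $u$ has no constant term. What your route buys is self-containedness — the proposition no longer rests on an external citation — at the cost of a page of generating-function bookkeeping; the paper's route is shorter but leaves the key Bernoulli--Stirling--harmonic identity as a black box.
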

\begin{proof}
Use
\begin{equation}
a_k  = \frac{{( - 1)^k k!}}{{k + 1}}\braces{{ n}}{k}
\end{equation}
in~\eqref{rzv2imh}, noting that~\cite{kellner14}:
\begin{equation}
\sum_{k = 1}^n {\frac{{( - 1)^k k!}}{{k + 1}}\braces{{ n}}{k}H_k}  =  - \frac{n}{2}B_{n - 1} .
\end{equation}
\end{proof}
\begin{proposition}
If $n$ is a non-negative integer, $x$ is a complex number with $x\neq 1$, and  $r$ is a complex number that is not a negative integer, then 
\begin{equation}\label{har_pol_id1}
\sum_{k = 1}^n \sum_{j = 0}^{k - 1} \frac{x^{n-j}}{k - j + r} 
= \frac{1}{1-x} \left (\sum_{k=1}^n \frac{x^k}{k+r} - x^{n+1}(H_{n+r} - H_r) \right ).
\end{equation}
In particular,
\begin{equation}\label{r=0}
\sum_{k = 1}^n \sum_{j = 0}^{k - 1} \frac{x^{n-j}}{k - j} = \frac{1}{1-x} \left ( \sum_{k=1}^n \frac{x^k}{k} - x^{n+1} H_n \right )
\end{equation}
and
\begin{equation}
\sum_{k = 1}^n \sum_{j = 0}^{k - 1} \frac{x^{n-j}}{2k - 2j - 1} = \frac{1}{1-x} \left ( \sum_{k=1}^n \frac{x^k}{2k-1} - x^{n+1} O_n \right ).
\end{equation}
\end{proposition}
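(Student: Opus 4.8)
The plan is to specialize identity~\eqref{n1e84dx} to the geometric sequence $a_k = x^k$, $k = 1, 2, 3, \ldots$, and then evaluate the single sum that appears on its right-hand side. With this choice the left-hand side of~\eqref{n1e84dx} is precisely the double sum on the left of~\eqref{har_pol_id1}, while the right-hand side collapses to
\[
\sum_{k=1}^n x^k H_{k+r} - H_r \sum_{k=1}^n x^k = \sum_{k=1}^n x^k\bigl(H_{k+r} - H_r\bigr).
\]
Thus the proposition is reduced to proving
\[
\sum_{k=1}^n x^k\bigl(H_{k+r} - H_r\bigr) = \frac{1}{1-x}\left(\sum_{k=1}^n \frac{x^k}{k+r} - x^{n+1}\bigl(H_{n+r} - H_r\bigr)\right).
\]

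To establish this reduced identity I would use the identity $H_{k+r} - H_r = \sum_{m=1}^k \frac{1}{m+r}$, valid for every positive integer $k$, substitute it into the left-hand side, and interchange the two finite summations:
\[
\sum_{k=1}^n x^k \sum_{m=1}^k \frac{1}{m+r} = \sum_{m=1}^n \frac{1}{m+r} \sum_{k=m}^n x^k.
\]
Since $x \neq 1$, the inner sum is the finite geometric series $\sum_{k=m}^n x^k = \dfrac{x^m - x^{n+1}}{1-x}$; inserting this and splitting off the term carrying $x^{n+1}$ gives $\dfrac{1}{1-x}\left(\sum_{m=1}^n \dfrac{x^m}{m+r} - x^{n+1}\sum_{m=1}^n \dfrac{1}{m+r}\right)$, and using $\sum_{m=1}^n \frac{1}{m+r} = H_{n+r} - H_r$ once more yields exactly the right-hand side of~\eqref{har_pol_id1}.

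The two special cases then drop out by choosing $r$ suitably. Taking $r = 0$ and using $H_{n} - H_{0} = H_n$ gives~\eqref{r=0} verbatim. Taking $r = -1/2$ and noting $\frac{1}{k-j-1/2} = \frac{2}{2k-2j-1}$, $\frac{1}{k-1/2} = \frac{2}{2k-1}$, and $H_{n-1/2} - H_{-1/2} = \sum_{m=1}^n \frac{2}{2m-1} = 2O_n$, a common factor $2$ cancels from both sides and the last displayed identity follows. I do not expect a genuine obstacle: the only points needing care are the correct handling of $H_{k+r}$ for non-integral $r$ through the sum $\sum_{m=1}^k (m+r)^{-1}$, and the observation that the hypothesis on $r$ prevents any denominator $m+r$ with $1 \le m \le n$ from vanishing. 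Everything else is the interchange of summation together with the geometric series formula.
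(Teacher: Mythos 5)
Your proposal is correct and takes essentially the same approach as the paper: both specialize \eqref{n1e84dx} to $a_k=x^k$ and then reduce $\sum_{k=1}^n x^k\bigl(H_{k+r}-H_r\bigr)$ to $\frac{1}{1-x}\bigl(\sum_{k=1}^n \frac{x^k}{k+r}-x^{n+1}(H_{n+r}-H_r)\bigr)$, before setting $r=0$ and $r=-1/2$ for the special cases. The only difference is in the bookkeeping of that reduction: you expand $H_{k+r}-H_r=\sum_{m=1}^k(m+r)^{-1}$, interchange the two finite sums and evaluate a geometric series, while the paper gets the same identity by summation by parts applied to $\sum_{k=1}^n x^k/(k+r)$ and then solves for $\sum_{k=1}^n x^k H_{k+r}$ --- equivalent manipulations leading to the identical conclusion.
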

\begin{proof}
Work with $a_k=x^k, x\neq 1,$ in \eqref{n1e84dx}. Then 
\begin{align*}
\sum_{k = 1}^n \sum_{j = 0}^{k - 1} \frac{x^{n - j}}{k - j + r} &= \sum_{k = 1}^n x^k H_{k + r} - H_r \sum_{k=1}^n x^k \\
&= \sum_{k = 1}^n x^k H_{k + r} - H_r \frac{x}{1-x} + H_r \frac{x^{n+1}}{1-x}.
\end{align*}
Applying summation by parts we get
\begin{align*}
\sum_{k = 1}^n \frac{x^k}{k + r} &= \sum_{k=1}^n x^k (H_{k+r} - H_{k-1+r}) \\
&= x^{n+1} H_{n+r} - x H_r - \sum_{k=1}^n H_{k+r}\left (x^{k+1} - x^k \right ) \\
&= x^{n+1} H_{n+r} - x H_r + (1-x) \sum_{k=1}^n x^k H_{k+r}.
\end{align*}
Gathering terms and simplifying proves the main statement. The particular cases are obtained for $r=0$ and $r=-1/2$, respectively.
\end{proof}

\begin{remark}
The case $x=1$ can be treated with l'Hospital's rule. However, this case is not interesting in the sense that it is not a double sum 
as we have for $r\geq 0$ an integer
$$\sum_{k = 1}^n \sum_{j = 0}^{k - 1} \frac{1}{k-j+r} = \sum_{k = 1}^n \sum_{j=1}^{k} \frac{1}{j+r} = \sum_{k = 1}^n (H_{k+r}-H_r).$$
The evaluation of the last sum is elementary.
\end{remark}

\begin{corollary}
If $n$ is a non-negative integer and $r$ is a complex number that is not a negative integer, then 
\begin{equation}
\sum_{k = 1}^n \sum_{j = 0}^{k - 1} \frac{G_{n-j}}{k - j + r} = G_{n+2}(H_{n+r} - H_r) - \sum_{k=1}^n \frac{G_{k+1}}{k+r}.
\end{equation}
In particular,
\begin{equation}
\sum_{k = 1}^n \sum_{j = 0}^{k - 1} \frac{G_{n-j}}{k - j} = G_{n+2}H_{n} - \sum_{k=1}^n \frac{G_{k+1}}{k}
\end{equation}
and
\begin{equation}
\sum_{k = 1}^n \sum_{j = 0}^{k - 1} \frac{G_{n-j}}{2k - 2j - 1} = G_{n+2}O_n - \sum_{k=1}^n \frac{G_{k+1}}{2k-1}.
\end{equation}
\end{corollary}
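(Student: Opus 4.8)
The plan is to specialize the general identity \eqref{n1e84dx} to $a_k=G_k$, which instantly gives
\begin{equation*}
\sum_{k = 1}^n \sum_{j = 0}^{k - 1} \frac{G_{n-j}}{k - j + r} = \sum_{k = 1}^n G_k H_{k + r} - H_r \sum_{k = 1}^n G_k ,
\end{equation*}
so the whole problem reduces to evaluating the two ordinary sums on the right. The second is immediate from the telescoping gibonacci identity $\sum_{k=1}^n G_k = G_{n+2}-G_2$. The real work is the weighted sum $\sum_{k=1}^n G_k H_{k+r}$, and here I would mimic the summation-by-parts manoeuvre already used in the proof of \eqref{har_pol_id1}.

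Concretely, write $G_k = S_k - S_{k-1}$ with $S_k = \sum_{i=1}^k G_i = G_{k+2}-G_2$ and $S_0=0$, and apply Abel summation against $H_{k+r}$, using $H_{k+1+r}-H_{k+r} = 1/(k+1+r)$:
\begin{equation*}
\sum_{k=1}^n G_k H_{k+r} = (G_{n+2}-G_2)H_{n+r} - \sum_{k=1}^{n-1} \frac{G_{k+2}-G_2}{k+1+r}.
\end{equation*}
Reindexing the last sum (set $k\mapsto k-1$) turns it into $\sum_{k=2}^{n}\frac{G_{k+1}-G_2}{k+r}$; splitting off the $G_2$-part and recognising $\sum_{k=1}^n \frac{1}{k+r} = H_{n+r}-H_r$, the spurious $G_2/(1+r)$ boundary terms cancel and one is left with
\begin{equation*}
\sum_{k=1}^n G_k H_{k+r} = G_{n+2}H_{n+r} - G_2 H_r - \sum_{k=1}^n \frac{G_{k+1}}{k+r}.
\end{equation*}

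Substituting this and $\sum_{k=1}^n G_k = G_{n+2}-G_2$ back into the specialized \eqref{n1e84dx}, the $G_2 H_r$ terms cancel against $H_r(G_{n+2}-G_2)$ and the claimed identity $\sum_{k}\sum_{j}\frac{G_{n-j}}{k-j+r}=G_{n+2}(H_{n+r}-H_r)-\sum_{k=1}^n \frac{G_{k+1}}{k+r}$ drops out. The two displayed particular cases then follow exactly as before: $r=0$ is direct since $H_r=0$, while for the $2k-2j-1$ denominator one takes $r=-1/2$, uses $H_{k-1/2}-H_{-1/2}=2O_k$, and divides the whole identity through by $2$.

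The only delicate point — and the step I would double-check carefully — is the bookkeeping in the Abel summation: getting the endpoint term $S_nH_{n+r}$ and the index shift right, and verifying that all the $G_2$-dependent boundary contributions (the $G_2/(1+r)$ pieces and the $G_2H_r$ piece) really do cancel. Everything else is routine algebra, and structurally the argument is parallel to the proof of Proposition~\eqref{har_pol_id1}, with $\sum x^k=(x-x^{n+1})/(1-x)$ replaced by $\sum G_k=G_{n+2}-G_2$.
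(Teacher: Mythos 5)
Your argument is correct, but it is not the route the paper takes. The paper's proof simply specializes the already-established identity \eqref{har_pol_id1} at $x=\alpha$ and $x=\beta$ (noting $1-\alpha=\beta$, $1-\beta=\alpha$, so the prefactor $1/(1-x)$ and the $x^{n+1}$ turn the geometric data into $\alpha^{n+2}$, $\beta^{n+2}$) and then combines the two evaluations linearly via the Binet form $G_m=A\alpha^m+B\beta^m$; the summation by parts has in effect already been done once and for all in the proof of \eqref{har_pol_id1}. You instead go back to \eqref{n1e84dx} with $a_k=G_k$ and redo the Abel summation directly on the gibonacci side, using the partial sums $S_k=\sum_{i=1}^kG_i=G_{k+2}-G_2$ and $H_{k+1+r}-H_{k+r}=1/(k+1+r)$; I checked your bookkeeping (the endpoint term $(G_{n+2}-G_2)H_{n+r}$, the reindexing, and the cancellation of the $G_2/(1+r)$ and $G_2H_r$ boundary pieces) and it is right, giving $\sum_{k=1}^nG_kH_{k+r}=G_{n+2}H_{n+r}-G_2H_r-\sum_{k=1}^nG_{k+1}/(k+r)$, from which the corollary follows; the specializations $r=0$ and $r=-1/2$ (with $H_{n-1/2}-H_{-1/2}=2O_n$ and division by $2$) are handled correctly as well. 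Your derivation is more self-contained and arguably more robust: it never invokes the Binet decomposition or irrational values of $x$, only the recurrence through $\sum_{k=1}^nG_k=G_{n+2}-G_2$, so it would transfer verbatim to other sequences with a closed-form partial sum. What the paper's route buys is economy: once \eqref{har_pol_id1} is available, the corollary is a two-line specialization, whereas you repeat an Abel-summation computation that is structurally the same as the one already carried out in the proof of \eqref{har_pol_id1}.
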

\begin{proof}
Set $x=\alpha$ and $x=\beta$ with $\alpha=(1+\sqrt{5})/2$ and $\beta=-1/\alpha$ in \eqref{har_pol_id1}, simplify, and combine according to the Binet forms for Fibonacci (Lucas) numbers.
\end{proof}

\begin{corollary}
If $n$ is a non-negative integer, then
\begin{equation}
\sum_{k = 1}^n \sum_{j = 0}^{k - 1} \frac{(-1)^j}{k - j}
=\begin{cases}
 \dfrac{1}{2} H_{n/2},& \text{if $n$ is even;}  \\ 
 O_{(n + 1)/2},& \text{if $n$ is odd.}  \\ 
 \end{cases}
\end{equation}
\end{corollary}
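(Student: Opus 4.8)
The plan is to specialize equation~\eqref{r=0} to $x=-1$ and then disentangle the signs and the resulting partial sum. Since $(-1)^{n-j}=(-1)^n(-1)^j$, putting $x=-1$ in~\eqref{r=0} and multiplying through by $(-1)^n$ yields
\[
\sum_{k=1}^n \sum_{j=0}^{k-1}\frac{(-1)^j}{k-j}
=(-1)^n\cdot\frac{1}{2}\left(\sum_{k=1}^n\frac{(-1)^k}{k}-(-1)^{n+1}H_n\right)
=\frac{(-1)^n}{2}\sum_{k=1}^n\frac{(-1)^k}{k}+\frac{1}{2}H_n,
\]
where we used $(-1)^n(-1)^{n+1}=-1$. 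Thus the whole problem reduces to evaluating the alternating harmonic partial sum.

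Next I would record the elementary identity $\sum_{k=1}^n\frac{(-1)^{k-1}}{k}=H_n-H_{\lfloor n/2\rfloor}$, proved, e.g., by pairing consecutive terms or by a one-line induction on $n$; equivalently $\sum_{k=1}^n\frac{(-1)^k}{k}=H_{\lfloor n/2\rfloor}-H_n$. Substituting into the previous display gives
\[
\sum_{k=1}^n\sum_{j=0}^{k-1}\frac{(-1)^j}{k-j}=\frac{(-1)^n}{2}\bigl(H_{\lfloor n/2\rfloor}-H_n\bigr)+\frac{1}{2}H_n.
\]

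Finally I would split according to the parity of $n$. If $n=2m$ is even, then $(-1)^n=1$ and $\lfloor n/2\rfloor=m$, so the right-hand side collapses to $\tfrac12 H_m=\tfrac12 H_{n/2}$. If $n=2m-1$ is odd, then $(-1)^n=-1$ and $\lfloor n/2\rfloor=m-1$, so the right-hand side becomes $H_{2m-1}-\tfrac12 H_{m-1}$; to identify this with $O_m=O_{(n+1)/2}$ I would invoke the standard relation $O_m=H_{2m}-\tfrac12 H_m$ together with the observation that $H_{2m}-H_{2m-1}=\tfrac12\bigl(H_m-H_{m-1}\bigr)=\tfrac1{2m}$, whence $H_{2m}-\tfrac12 H_m=H_{2m-1}-\tfrac12 H_{m-1}$.

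There is no genuine obstacle here: the argument is entirely bookkeeping of signs under the substitution $x=-1$, plus two classical closed forms, one for $\sum(-1)^{k-1}/k$ and one for $O_m$. The only point that requires care is keeping the floor functions consistent between the two parity cases.
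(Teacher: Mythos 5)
Your proposal is correct and follows essentially the same route as the paper: substitute $x=-1$ in \eqref{r=0}, evaluate the alternating harmonic partial sum via $\sum_{k=1}^n(-1)^k/k=H_{\lfloor n/2\rfloor}-H_n$, and split by parity. You merely make explicit the $(-1)^n$ sign bookkeeping and the final identification $H_{2m-1}-\tfrac12 H_{m-1}=O_m$, which the paper's proof leaves implicit.
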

\begin{proof}
When $x=-1$ in \eqref{r=0} , then
\begin{align*}
\sum_{k = 1}^n \sum_{j = 0}^{k - 1} \frac{(-1)^{n-j}}{k - j} &= \frac{1}{2}\left (\sum_{k=1}^n \frac{(-1)^k}{k} + (-1)^{n} H_n \right ) \\
&= \frac{1}{2}\left (- H_n + H_{\lfloor n/2 \rfloor} + (-1)^{n} H_n \right ) \\
&= \begin{cases}
 \dfrac{1}{2} H_{\lfloor n/2 \rfloor},& \text{if $n$ is even;}  \\ 
 -\left ( H_n - \dfrac{1}{2} H_{\lfloor n/2 \rfloor}\right ),& \text{if $n$ is odd.}  \\ 
 \end{cases}
\end{align*}
\end{proof}

\begin{proposition}
If $n$ is a non-negative integer and $p$ and $x$ are complex numbers, then 
\begin{equation}\label{har_pol_id2}
\sum_{k = 1}^n \sum_{j = 0}^{k - 1} \frac{x^{n-j}}{k - j}\left (\frac{1}{n+p-j}-\frac{x}{n+1+p-j}\right )
= \sum_{k=1}^n \frac{x^k}{k(k+p)} - H_n \frac{x^{n+1}}{n+1+p}.
\end{equation}
\end{proposition}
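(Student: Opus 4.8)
The plan is to recognize the left-hand side of~\eqref{har_pol_id2} as a special case of the already-established identity~\eqref{rzv2imh}, and then to evaluate the resulting single sum by summation by parts. First I would set
\[
a_m = x^m\left(\frac{1}{m+p} - \frac{x}{m+1+p}\right),\qquad m\ge 1
\]
(tacitly assuming $p\notin\{-1,-2,\ldots,-(n+1)\}$, so that no denominator vanishes), so that $a_{n-j} = x^{n-j}\left(\frac{1}{n+p-j}-\frac{x}{n+1+p-j}\right)$ is precisely the quantity multiplying $1/(k-j)$ on the left of~\eqref{har_pol_id2}. Applying~\eqref{rzv2imh} then replaces the left-hand side by $\sum_{k=1}^n a_k H_k$, and it remains to prove that
\[
\sum_{k=1}^n x^k\left(\frac{1}{k+p} - \frac{x}{k+1+p}\right)H_k = \sum_{k=1}^n \frac{x^k}{k(k+p)} - H_n\,\frac{x^{n+1}}{n+1+p}.
\]

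The key observation is that, writing $c_k = x^k/(k+p)$, one has the telescoping-type factorization $a_k = c_k - c_{k+1}$, since $x^k\cdot\frac{x}{k+1+p} = \frac{x^{k+1}}{k+1+p} = c_{k+1}$. Thus the sum to be evaluated is $\sum_{k=1}^n (c_k-c_{k+1})H_k$, whose partial sums $\sum_{j=1}^k (c_j-c_{j+1}) = c_1-c_{k+1}$ telescope. Abel summation, together with $H_{k+1}-H_k = 1/(k+1)$, gives
\[
\sum_{k=1}^n (c_k-c_{k+1})H_k = (c_1-c_{n+1})H_n - \sum_{k=1}^{n-1}\frac{c_1-c_{k+1}}{k+1}.
\]
I would then compute $\sum_{k=1}^{n-1}\frac{c_1}{k+1} = c_1(H_n-1)$ and, after the index shift $m=k+1$, $\sum_{k=1}^{n-1}\frac{c_{k+1}}{k+1} = \sum_{m=1}^{n}\frac{c_m}{m} - c_1$. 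Substituting these back, all $c_1H_n$ and $c_1$ terms cancel and one is left with $\sum_{m=1}^n \frac{c_m}{m} - c_{n+1}H_n$; since $c_m/m = x^m/(m(m+p))$ and $c_{n+1} = x^{n+1}/(n+1+p)$, this is exactly the right-hand side.

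There is no genuinely difficult step here: once the factorization $a_k = c_k-c_{k+1}$ is spotted, the argument is a short summation-by-parts computation. The only points requiring a little care are the boundary terms in the Abel summation and the implicit restriction on $p$ needed to keep every denominator nonzero; it is also worth checking the degenerate cases $n=0$ and $n=1$ directly, where the empty and one-term sums are readily seen to match.
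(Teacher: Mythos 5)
Your argument is correct, and it checks out in detail: with $a_m=x^m\bigl(\tfrac{1}{m+p}-\tfrac{x}{m+1+p}\bigr)=c_m-c_{m+1}$, $c_m=x^m/(m+p)$, identity \eqref{rzv2imh} reduces the claim to $\sum_{k=1}^n(c_k-c_{k+1})H_k=\sum_{k=1}^n c_k/k-c_{n+1}H_n$, and your Abel-summation bookkeeping (boundary terms and the index shift) is right. However, this is a genuinely different route from the paper's. The paper obtains \eqref{har_pol_id2} from the preceding special case \eqref{r=0}: it multiplies \eqref{r=0} by $(1-x)x^{p-1}$ and integrates in $x$, which converts $x^{n-j+p-1}(1-x)$ into $\tfrac{x^{n-j+p}}{n-j+p}-\tfrac{x^{n-j+p+1}}{n+1+p-j}$ and the right-hand side into $\sum_k \tfrac{x^{k+p}}{k(k+p)}-\tfrac{x^{n+p+1}}{n+1+p}H_n$; dividing by $x^p$ gives the claim. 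That derivation is a one-line consequence of \eqref{r=0} and explains the shape of the identity as the ``integrated companion'' of \eqref{r=0}, but it silently carries the restriction $x\neq 1$ from \eqref{r=0} (removed only via the factor $1-x$ and continuity) and leaves the term-by-term integration with the factor $x^{p-1}$ for complex $p$ unjustified. Your proof is purely finite and algebraic, goes back to the general identity \eqref{rzv2imh} with a telescoping choice of $a_k$, works uniformly in $x$ (including $x=1$), and makes explicit the genuinely necessary hypothesis $p\notin\{-1,-2,\ldots,-(n+1)\}$, which the paper's statement (``$p$ and $x$ are complex numbers'') glosses over. In short: same conclusion, different and somewhat more self-contained mechanism; the paper's is shorter given \eqref{r=0}, yours avoids the analytic detour.
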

\begin{proof}
Multiply \eqref{r=0} by $(1-x)x^{p-1}, p\in\mathbb C,$ and integrate. 
\end{proof}

\begin{corollary}
If $n$ is a non-negative integer and $p$ is a complex number that is not a non-positive integer, then
\begin{equation}\label{puucoix}
\sum_{k = 1}^n \sum_{j = 0}^{k - 1} \frac{1}{(k - j)(n+p-j)(n+1+p-j)} = \frac{1}{p}\left (H_n + H_{p} - H_{n+p}\right ) - \frac{H_n}{n+1+p}.
\end{equation}
In particular,
\begin{equation}\label{whpvty2}
\sum_{k = 1}^n \sum_{j = 0}^{k - 1} \frac{1}{(k - j)(n+1-j)(n+2-j)} = \frac{n}{n+1} - \frac{H_n}{n+2}
\end{equation}
and
\begin{equation}
\sum_{k = 1}^n \sum_{j = 0}^{k - 1} \frac{1}{(k - j)(2(n - j) - 1)(2(n - j) + 1)} = O_n - \frac{n + 1}{2n + 1} H_n.
\end{equation}
\end{corollary}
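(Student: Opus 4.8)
The plan is to obtain all three identities by specializing the preceding proposition. Concretely, I would set $x=1$ in~\eqref{har_pol_id2}. This substitution is legitimate: for a fixed $n$ and a fixed admissible $p$, both sides of~\eqref{har_pol_id2} are polynomials in $x$ (the left side is a finite sum of monomials $x^{n-j}$ and $x^{n-j+1}$ with coefficients rational in $p$, and the right side is visibly a polynomial in $x$), so an identity valid for generic $x$ persists at $x=1$; equivalently, one may carry out the integration in the proof of~\eqref{har_pol_id2} and then let $x\to 1$.

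With $x=1$ the inner bracket on the left telescopes, since $\frac{1}{n+p-j}-\frac{1}{n+1+p-j}=\frac{1}{(n+p-j)(n+1+p-j)}$, and the left-hand side of~\eqref{har_pol_id2} becomes precisely the triple-product double sum appearing in~\eqref{puucoix}. On the right-hand side, the partial-fraction decomposition $\frac{1}{k(k+p)}=\frac1p\bigl(\frac1k-\frac1{k+p}\bigr)$ gives $\sum_{k=1}^n\frac{1}{k(k+p)}=\frac1p\bigl(H_n-(H_{n+p}-H_p)\bigr)$, while the remaining term is just $-\frac{H_n}{n+1+p}$. Combining these proves~\eqref{puucoix}.

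The two particular cases then follow by choosing $p$. For $p=1$ one uses $H_1=1$ and $H_{n+1}=H_n+\frac{1}{n+1}$; after the cancellation of the $H_n$ terms the right side of~\eqref{puucoix} collapses to $1-\frac{1}{n+1}-\frac{H_n}{n+2}=\frac{n}{n+1}-\frac{H_n}{n+2}$, which is~\eqref{whpvty2}. For $p=-1/2$ we have $n+p-j=(2(n-j)-1)/2$ and $n+1+p-j=(2(n-j)+1)/2$, so every summand on the left acquires a factor $4$; on the right, $\frac{H_n}{n+1+p}=\frac{2H_n}{2n+1}$, and $H_{n+p}-H_p=H_{n-1/2}-H_{-1/2}=\sum_{k=1}^n\frac{1}{k-1/2}=2O_n$, whence $\frac1p(H_n+H_p-H_{n+p})=-2H_n+4O_n$. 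Dividing the resulting identity by $4$ yields $O_n-\frac{H_n}{2}-\frac{H_n}{2(2n+1)}=O_n-\frac{(n+1)H_n}{2n+1}$, the third displayed identity. Every computation here is elementary; the only points that deserve a little care are the justification that $x=1$ may be inserted in~\eqref{har_pol_id2} and the bookkeeping with the half-integer harmonic numbers $H_{-1/2}$ and $H_{n-1/2}$ in the last case, for which one invokes the standard relation $\sum_{k=1}^n\frac{1}{k-1/2}=2\sum_{k=1}^n\frac{1}{2k-1}=2O_n$. I do not foresee any genuine obstacle.
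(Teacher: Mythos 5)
Your proposal is correct and follows essentially the same route as the paper: set $x=1$ in~\eqref{har_pol_id2}, evaluate $\sum_{k=1}^n \frac{1}{k(k+p)}$ by partial fractions, and then specialize to $p=1$ and $p=-1/2$. Your extra remarks (the polynomial-in-$x$ justification for $x=1$ and the explicit factor-of-$4$ bookkeeping with $H_{n-1/2}-H_{-1/2}=2O_n$ in the $p=-1/2$ case) are sound details the paper leaves implicit.
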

\begin{proof}
Set $x=1$ in \eqref{har_pol_id2} and calculate
\begin{equation}
\sum_{k=1}^n \frac{1}{k(k+p)} = \frac{1}{p}\sum_{k=1}^n \left ( \frac{1}{k} - \frac{1}{k+p} \right ) = \frac{1}{p} \left (H_n + H_{p} - H_{n+p}\right ).
\end{equation}
Evaluate~\eqref{puucoix} at $p=1$ and $p=-1/2$, respectively.
\end{proof}

\begin{proposition}
If $r$ is a non-negative integer, then
\begin{equation}\label{d5kgtna}
\sum_{m = 1}^r {\sum_{n = 1}^{m - 1} {\sum_{k = 1}^{n - 1} {\sum_{j = 0}^{k - 1} {\frac{1}{{\left( {k - j} \right)\left( {n - j} \right)\left( {n - j + 1} \right)}}} } } }  = r  - H_r - \frac{{r + 1}}{2}\left( {H_r^2  - H_r^{(2)} } \right) .
\end{equation}

\end{proposition}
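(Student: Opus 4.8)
The plan is to collapse the quadruple sum from the inside out, starting from the already–proved evaluation~\eqref{whpvty2}.

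\textbf{Step 1: remove the $k$- and $j$-summations.} For a fixed $n$, the inner double sum over $k$ and $j$ is exactly the left-hand side of~\eqref{whpvty2} with $n$ replaced by $n-1$, because $(n-j)(n-j+1)=\bigl((n-1)+1-j\bigr)\bigl((n-1)+2-j\bigr)$ and the summation range $1\le k\le n-1$ matches. Hence
\[
\sum_{k=1}^{n-1}\sum_{j=0}^{k-1}\frac{1}{(k-j)(n-j)(n-j+1)}=\frac{n-1}{n}-\frac{H_{n-1}}{n+1}=1-\frac1n-\frac{H_{n-1}}{n+1},
\]
so the quadruple sum equals $\displaystyle\sum_{m=1}^{r}\sum_{n=1}^{m-1}\Bigl(1-\tfrac1n-\tfrac{H_{n-1}}{n+1}\Bigr)$.

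\textbf{Step 2: evaluate the inner $n$-sum.} I would next find a closed form for $S(m):=\sum_{n=1}^{m-1}\bigl(1-\tfrac1n-\tfrac{H_{n-1}}{n+1}\bigr)$. The first two terms contribute $(m-1)-H_{m-1}$. For the last term, the substitution $H_{n-1}=H_{n+1}-\tfrac1n-\tfrac1{n+1}$ followed by reindexing gives
\[
\sum_{n=1}^{m-1}\frac{H_{n-1}}{n+1}=\sum_{j=2}^{m}\frac{H_j}{j}-\sum_{n=1}^{m-1}\frac{1}{n(n+1)}-\sum_{n=1}^{m-1}\frac{1}{(n+1)^2},
\]
where the last two sums telescope to $1-\tfrac1m$ and $H_m^{(2)}-1$, and the first is evaluated by the classical Euler identity $\sum_{j=1}^{m}H_j/j=\tfrac12\bigl(H_m^2+H_m^{(2)}\bigr)$. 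Gathering terms yields the compact expression $S(m)=m-H_m-\tfrac12\bigl(H_m^2-H_m^{(2)}\bigr)$.

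\textbf{Step 3: evaluate the outer $m$-sum.} It remains to sum $S(m)$ over $1\le m\le r$, which needs the standard closed forms
\[
\sum_{m=1}^{r}H_m=(r+1)H_r-r,\qquad\sum_{m=1}^{r}H_m^{(2)}=(r+1)H_r^{(2)}-H_r,
\]
\[
\sum_{m=1}^{r}H_m^2=(r+1)H_r^2-(2r+1)H_r+2r,
\]
the third of which one obtains by Abel summation together, once more, with $\sum_{j=1}^{m}H_j/j=\tfrac12\bigl(H_m^2+H_m^{(2)}\bigr)$. Substituting these into $\sum_{m=1}^{r}S(m)$ and collecting the coefficients of $1$, $H_r$, and $H_r^2-H_r^{(2)}$ produces the right-hand side of~\eqref{d5kgtna}.

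\textbf{Anticipated main obstacle.} There is no conceptual difficulty once~\eqref{whpvty2} is invoked; the work lies entirely in the bookkeeping with the shifted harmonic numbers $H_{n-1}$ and $H_{m-1}$, and especially in establishing the quadratic identity $\sum_{m=1}^{r}H_m^2=(r+1)H_r^2-(2r+1)H_r+2r$. Once that, the telescoping pieces, and the single weight-two Euler sum $\sum H_j/j$ are in hand, the stated formula follows by routine simplification.
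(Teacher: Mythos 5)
Your route is the same as the paper's: collapse the inner double sum via \eqref{whpvty2} with $n\mapsto n-1$, evaluate the $n$-sum to get $S(m)=m-H_m-\tfrac12\bigl(H_m^2-H_m^{(2)}\bigr)$ (this is exactly the paper's intermediate identity \eqref{de1xari}, and your Euler-sum bookkeeping for it is correct and more detailed than the paper's), and then sum over $m$ with the three standard closed forms. Steps 1 and 2 are sound.

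The gap is in Step 3, precisely in the "routine simplification" you did not carry out. Since $\sum_{m=1}^r m=\tfrac{r(r+1)}{2}$, substituting the three closed forms gives
\[
\sum_{m=1}^{r}S(m)=\frac{r(r+1)}{2}-H_r-\frac{r+1}{2}\left(H_r^2-H_r^{(2)}\right),
\]
not the right-hand side of \eqref{d5kgtna}, whose leading term is $r$. Indeed \eqref{d5kgtna} as printed is false: for $r=3$ the quadruple sum has the single nonzero term $\tfrac{1}{1\cdot2\cdot3}=\tfrac16$, while the stated right-hand side is $3-\tfrac{11}{6}-2\cdot 2=-\tfrac{17}{6}$, which is even negative although the left side is a sum of positive terms; the corrected value $\tfrac{3\cdot 4}{2}-\tfrac{11}{6}-4=\tfrac16$ matches. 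So your claim that collecting coefficients "produces the right-hand side of \eqref{d5kgtna}" cannot be true as stated; the leading term must be $\binom{r+1}{2}$ rather than $r$. This is an error you share with the paper (its own proof, executed faithfully, also yields $\tfrac{r(r+1)}{2}$), but a completed Step 3 or a check at a small value of $r$ would have exposed it, and a proof that asserts agreement with an identity that fails numerically has a genuine hole.
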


\begin{proof}
The sums
\begin{align}
\sum_{m = 1}^r {H_m }  &= \left( {r + 1} \right)H_r  - r,\label{awtwabh}\\
\sum_{m=1}^r H_m^2 &= (r+1)H_r^2 - (2r+1)H_r + 2r\label{zjnw2dj},
\end{align}
and
\begin{equation}\label{nfqpja6}
\sum_{m = 1}^r {H_m^{(2)} }  = \left( {r + 1} \right)H_r^{(2)}  - H_r
\end{equation}
can be derived easily. Write $n-1$ for $n$ in~\eqref{whpvty2} and sum to obtain
\begin{equation}\label{de1xari}
\sum_{n = 1}^{m - 1} {\sum_{k = 1}^{n - 1} {\sum_{j = 0}^{k - 1} {\frac{1}{{\left( {k - j} \right)\left( {n - j} \right)\left( {n - j + 1} \right)}}} } }  = m - H_m  - \frac{1}{2}\left( {H_m^2  - H_m^{(2)} } \right).
\end{equation}
Summing~\eqref{de1xari} over $m$ using~\eqref{awtwabh},~\eqref{zjnw2dj} and~\eqref{nfqpja6} produces~\eqref{d5kgtna}.

\end{proof}
\begin{proposition}
If $n$ is a non-negative integer and $p$ is a positive integer, then
\begin{align}
\sum_{k = 1}^n \sum_{j = 0}^{k - 1} \frac{1}{(k - j)(n - j + p)} &= \frac{1}{2}\left( H_n^2 - H_n^{(2)} \right) - \frac{1}{2}\left( H_{p - 1}^2  + H_{p - 1}^{(2)} \right) \nonumber\\
&\qquad + H_{p - 1} \left( H_{p + n - 1} - H_n \right) + H_n \left( H_{p + n} - H_n \right) \nonumber\\
&\qquad\qquad - \sum_{k = 1}^{p - 1} \frac{H_{k - 1}}{n + k}.
\end{align}
In particular,
\begin{equation}
\sum_{k = 1}^n \sum_{j = 0}^{k - 1} \frac{1}{(k - j)(n + 1 - j)} = \frac{1}{2}\left( H_n^2 - H_n^{(2)} \right) + \frac{H_n}{n + 1}.
\end{equation}
\end{proposition}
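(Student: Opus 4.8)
The plan is to collapse the double sum to a single harmonic sum and then evaluate that sum by a recursion in $p$. Setting $a_k = 1/(k+p)$ in \eqref{rzv2imh} (the denominators $k+p$ are nonzero because $p$ is a positive integer) turns the left-hand side into
\[
 S_p := \sum_{k=1}^{n}\frac{H_k}{k+p},
\]
so it suffices to prove that $S_p$ equals the asserted expression. For the base case $p=1$ I would write $H_k/(k+1) = H_{k+1}/(k+1) - 1/(k+1)^2$, shift the index, and use the well-known evaluations $\sum_{m=1}^{N} H_m/m = \tfrac12\big(H_N^2 + H_N^{(2)}\big)$ and $\sum_{m=1}^N 1/m^2 = H_N^{(2)}$; after replacing $H_{n+1}$ by $H_n + 1/(n+1)$ this gives $S_1 = \tfrac12\big(H_n^2 - H_n^{(2)}\big) + H_n/(n+1)$, which is exactly the particular case stated in the proposition.

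For the recursion, note that $S_p - S_{p+1} = \sum_{k=1}^{n} H_k/\big((k+p)(k+p+1)\big)$, and that applying \eqref{rzv2imh} with $a_k = 1/\big((k+p)(k+p+1)\big)$ identifies this single sum with the double sum on the left-hand side of \eqref{puucoix}. Hence, for every integer $p \ge 1$,
\[
 S_p - S_{p+1} = \frac1p\big(H_n + H_p - H_{n+p}\big) - \frac{H_n}{n+1+p},
\]
and telescoping from $p$ down to $1$ yields
\[
 S_p = S_1 - \sum_{q=1}^{p-1}\Big(\tfrac1q\big(H_n + H_q - H_{n+q}\big) - \tfrac{H_n}{n+1+q}\Big).
\]

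It then remains to simplify. I would expand the finite sum using $\sum_{q=1}^{p-1}1/q = H_{p-1}$, $\sum_{q=1}^{p-1}H_q/q = \tfrac12\big(H_{p-1}^2 + H_{p-1}^{(2)}\big)$ and $\sum_{q=1}^{p-1}1/(n+1+q) = H_{n+p} - H_{n+1}$, and then absorb the leftover $H_n/(n+1)$ from $S_1$ into $-H_n(H_{n+p} - H_{n+1})$ via $H_{n+1} = H_n + 1/(n+1)$ to obtain the term $H_n(H_{n+p} - H_n)$. The one step that is not purely mechanical is the evaluation of $\sum_{q=1}^{p-1}H_{n+q}/q$; here I would apply summation by parts with $1/q = H_q - H_{q-1}$, which gives $\sum_{q=1}^{p-1}H_{n+q}/q = H_{n+p-1}H_{p-1} - \sum_{q=1}^{p-2}H_q/(n+q+1)$, and then reindex $k = q+1$ (using $H_0 = 0$ to restore the $k=1$ term) to rewrite the residual sum as $\sum_{k=1}^{p-1}H_{k-1}/(n+k)$. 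Combining $-H_nH_{p-1} + H_{n+p-1}H_{p-1} = H_{p-1}(H_{n+p-1} - H_n)$ and collecting all pieces produces precisely the claimed identity; specializing to $p=1$ (so that $H_{p-1} = 0$ and the finite sum is empty) recovers the particular case. The main obstacle is simply the bookkeeping in this last stage — keeping the several harmonic-number products and the two index shifts consistent so that everything collapses to the stated closed form.
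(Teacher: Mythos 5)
Your proposal is correct, and its first step is exactly the paper's entire proof: substitute $a_k=1/(k+p)$ into \eqref{rzv2imh}, reducing the double sum to $S_p=\sum_{k=1}^{n}H_k/(k+p)$. Where you go further is that the paper stops there, implicitly taking the closed form of $S_p$ as known, whereas you derive it from the paper's own earlier results: the $p=1$ case via $H_k/(k+1)=H_{k+1}/(k+1)-1/(k+1)^2$ and $\sum_{m\le N}H_m/m=\tfrac12\bigl(H_N^2+H_N^{(2)}\bigr)$, and the general case by the telescoping recursion $S_p-S_{p+1}=\sum_{k=1}^{n}H_k/\bigl((k+p)(k+p+1)\bigr)$, whose value you read off from \eqref{puucoix} (legitimately non-circular, since that corollary is proved earlier from \eqref{r=0} and \eqref{har_pol_id2}), followed by an Abel-summation evaluation of $\sum_{q=1}^{p-1}H_{n+q}/q=H_{p-1}H_{n+p-1}-\sum_{k=1}^{p-1}H_{k-1}/(n+k)$. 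I checked the bookkeeping: the pieces recombine as $H_n\bigl(H_{n+p}-H_{n+1}+\tfrac1{n+1}\bigr)=H_n\bigl(H_{n+p}-H_n\bigr)$ and $H_{p-1}\bigl(H_{n+p-1}-H_n\bigr)$, giving precisely the stated right-hand side, and the $p=1$ specialization is consistent. So your argument buys a self-contained proof within the paper's framework, at the cost of the extra recursion and index-shift work; the paper's version is shorter but leaves the evaluation of $\sum_{k=1}^n H_k/(k+p)$ unjustified on the page.
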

\begin{proof}
Work with $a_k=1/(k+p),p\geq 1,$ in \eqref{rzv2imh}. The particular case follows by setting $p=1$.
\end{proof}

\begin{proposition}
If $n$ is a non-negative integer, then
\begin{equation}
\sum_{k = 1}^n \sum_{j = 0}^{k - 1} \frac{H_{n-j}}{k-j} = (n+1)H_n^2 - (2n+1)H_n + 2n,
\end{equation}
\begin{equation}
\sum_{k = 1}^n \sum_{j = 0}^{k - 1} \frac{H_{n-j}^2}{k-j} = (n+1)H_n^3 - \frac{3}{2}(2n+1)H_n^2 + 3(2n+1)H_n + \frac{1}{2}H_n^{(2)} - 6n,
\end{equation}
and
\begin{equation}
\sum_{k = 1}^n \sum_{j = 0}^{k - 1} \frac{H_{n-j}^{(2)}}{k-j} = (n+1)H_n H_n^{(2)} - \frac{1}{2}(2n+1)H_n^{(2)} + H_n - \frac{1}{2}H_n^2.
\end{equation}
\end{proposition}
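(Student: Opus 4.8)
The plan is to collapse all three double sums to ordinary single sums by means of identity~\eqref{rzv2imh}, and then to evaluate those single sums by telescoping. Choosing $a_k=H_k$, $a_k=H_k^2$ and $a_k=H_k^{(2)}$ in~\eqref{rzv2imh} turns the three left-hand sides into
\[
\sum_{k=1}^n H_k^2,\qquad \sum_{k=1}^n H_k^3,\qquad \sum_{k=1}^n H_k H_k^{(2)},
\]
respectively. The first of these is exactly~\eqref{zjnw2dj}, so the first identity is immediate, and it remains to find closed forms for the other two single sums.

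For $\sum_{k=1}^n H_k^3$ I would telescope the sequence $f(k)=(k+1)H_k^3$: writing $H_{k-1}=H_k-1/k$ and expanding the cube,
\[
(k+1)H_k^3-kH_{k-1}^3=H_k^3+3H_k^2-\frac{3H_k}{k}+\frac{1}{k^2},
\]
so that summing from $k=1$ to $n$ (with $f(0)=0$ because $H_0=0$) gives
\[
(n+1)H_n^3=\sum_{k=1}^n H_k^3+3\sum_{k=1}^n H_k^2-3\sum_{k=1}^n\frac{H_k}{k}+H_n^{(2)}.
\]
In the same way, telescoping $g(k)=(k+1)H_kH_k^{(2)}$ and using $H_{k-1}^{(2)}=H_k^{(2)}-1/k^2$ produces
\[
(n+1)H_nH_n^{(2)}=\sum_{k=1}^n H_kH_k^{(2)}+\sum_{k=1}^n\frac{H_k}{k}+\sum_{k=1}^n H_k^{(2)}-H_n^{(2)}.
\]
Solving these two relations for $\sum_{k=1}^n H_k^3$ and $\sum_{k=1}^n H_kH_k^{(2)}$ then requires only three auxiliary evaluations: $\sum_{k=1}^n H_k^2$ from~\eqref{zjnw2dj}, $\sum_{k=1}^n H_k^{(2)}$ from~\eqref{nfqpja6}, and the classical identity $\sum_{k=1}^n H_k/k=\tfrac12\bigl(H_n^2+H_n^{(2)}\bigr)$, which itself drops out of telescoping $\tfrac12 H_k^2$ since $\tfrac12 H_k^2-\tfrac12 H_{k-1}^2=H_k/k-\tfrac1{2k^2}$.

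Substituting these auxiliary sums into the two displayed relations and collecting terms --- gathering the coefficients of $H_n^3,H_n^2,H_n,H_n^{(2)}$ in the first case and of $H_nH_n^{(2)},H_n^2,H_n,H_n^{(2)}$ in the second --- yields the two stated right-hand sides; together with the first identity this completes the proof. I do not expect a genuine obstacle here: the one place where care is needed is the bookkeeping in the binomial-type expansions of $(H_k-1/k)^3$ and $(H_k-1/k)\bigl(H_k^{(2)}-1/k^2\bigr)$, where a term is easily lost, and in keeping straight which auxiliary sum contributes the trailing constants ($2n$, $-6n$, and so on) that survive on the right.
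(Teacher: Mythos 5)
Your proposal is correct, and its core is the same as the paper's: apply~\eqref{rzv2imh} with $a_k=H_k$, $a_k=H_k^2$ and $a_k=H_k^{(2)}$ to reduce the three double sums to $\sum_{k=1}^n H_k^2$, $\sum_{k=1}^n H_k^3$ and $\sum_{k=1}^n H_kH_k^{(2)}$. The only divergence is in the second step: the paper simply quotes the closed forms of the latter two sums from Jin and Sun \cite{JinSun} (and uses~\eqref{zjnw2dj} for the first), whereas you derive them from scratch by telescoping $(k+1)H_k^3$ and $(k+1)H_kH_k^{(2)}$ together with the elementary evaluations~\eqref{zjnw2dj},~\eqref{nfqpja6} and $\sum_{k=1}^n H_k/k=\tfrac12\bigl(H_n^2+H_n^{(2)}\bigr)$. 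I verified your telescoping identities and the final bookkeeping (e.g.\ $-3(n+1)+\tfrac32=-\tfrac32(2n+1)$ and $-\tfrac12-(n+1)+1=-\tfrac12(2n+1)$), and they do reproduce the stated right-hand sides; so your argument buys self-containedness at the cost of a page of routine computation, while the paper's citation keeps the proof to two lines.
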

\begin{proof}
Work with $a_k=H_k$, $a_k=H_k^2$, and $a_k=H_k^{(2)}$, in turn, in~\eqref{rzv2imh}. The sums
$$\sum_{k=1}^n H_k^3 = (n+1)H_n^3 - \frac{3}{2}(2n+1)H_n^2 + 3(2n+1)H_n + \frac{1}{2}H_n^{(2)} - 6n$$
and
$$\sum_{k=1}^n H_k H_k^{(2)} = (n+1)H_n H_n^{(2)} - \frac{1}{2}(2n+1)H_n^{(2)} + H_n - \frac{1}{2}H_n^2$$
come from the paper \cite{JinSun}.
\end{proof}

\section{Miscellaneous results involving binomial coefficients}

\begin{proposition}
If $n$ is a non-negative integer, then
\begin{equation}
\sum_{k = 1}^n {\sum_{j = \left\lfloor {n/2} \right\rfloor }^{k - 1} {\frac{1}{{n - j}}\binom{{j}}{{n - j}}} }  = F_{n + 1}  - 1.
\end{equation}
\end{proposition}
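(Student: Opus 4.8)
The plan is to massage the summand into the shape handled by Theorem~\ref{grat7c2}. Writing $m=n-j$, the term $\frac{1}{n-j}\binom{j}{n-j}$ becomes $\frac{1}{m}\binom{n-m}{m}$, so with $n$ held fixed it is natural to take $a_m=\binom{n-m}{m}$ (using the convention $\binom{r}{s}=0$ whenever $r<s$ or $r<0$). Then the inner summand on the left is precisely $a_{n-j}/(n-j)$, matching the inner summand of~\eqref{sk16ujw}.

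The only discrepancy is the lower limit of the inner sum: it is $\lfloor n/2\rfloor$ here, versus $0$ in~\eqref{sk16ujw}. First I would check that this is harmless. Whenever $j<n/2$ one has $n-j>j$, hence $\binom{j}{n-j}=0$; in particular every term with $0\le j\le\lfloor n/2\rfloor-1$ is zero, and the term $j=\lfloor n/2\rfloor$ is also zero when $n$ is odd. (No division by zero occurs, since $j\le k-1\le n-1$ forces $n-j\ge1$.) Consequently the inner sum is unchanged if its lower limit is dropped to $0$, and Theorem~\ref{grat7c2} applies verbatim to give
\[
\sum_{k=1}^n\sum_{j=\lfloor n/2\rfloor}^{k-1}\frac{1}{n-j}\binom{j}{n-j}
=\sum_{k=1}^n\sum_{j=0}^{k-1}\frac{a_{n-j}}{n-j}
=\sum_{k=1}^n a_k
=\sum_{k=1}^n\binom{n-k}{k}.
\]

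To finish, I would invoke the classical diagonal identity $\sum_{k\ge0}\binom{n-k}{k}=F_{n+1}$; subtracting off the $k=0$ term $\binom{n}{0}=1$ then yields $\sum_{k=1}^n\binom{n-k}{k}=F_{n+1}-1$, which is the assertion. I do not expect a genuine obstacle: the one point needing a moment's care is the verification that the exotic-looking lower limit $\lfloor n/2\rfloor$ can be replaced by $0$, after which the statement is a one-line consequence of Theorem~\ref{grat7c2} and a standard Fibonacci identity.
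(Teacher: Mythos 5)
Your proposal is correct and follows essentially the same route as the paper: substitute $a_k=\binom{n-k}{k}$ into Theorem~\ref{grat7c2} and invoke the diagonal Fibonacci identity $\sum_{k=0}^n\binom{n-k}{k}=F_{n+1}$, subtracting the $k=0$ term. Your explicit check that the lower limit $\lfloor n/2\rfloor$ may be replaced by $0$ (because $\binom{j}{n-j}=0$ for $j<n/2$) is a detail the paper leaves implicit, and it is a welcome clarification.
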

\begin{proof}
Use $a_k=\binom{n-k}k$ in Theorem~\ref{grat7c2}, noting that~\cite[Equation (6.130), p.303]{graham}:
\begin{equation}
\sum_{k=0}^n{\binom{n-k}k}=F_{n+1}.
\end{equation}
\end{proof}

\begin{proposition}
If $m$ and $n$ are non-negative integers and $p$ is a complex number, then
\begin{equation}
\sum_{k = 1}^n \sum_{j = 0}^{k - 1} \frac{(-1)^j}{k-j} \binom{p}{j}\binom{n-j}{m} = \binom{n-p}{m-p}\left( H_{n - p} - H_{m - p} + H_m \right).
\end{equation}
\end{proposition}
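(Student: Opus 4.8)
Because of the factor $(-1)^j\binom pj$ the summand does not have the form required by \eqref{main}, so I would argue directly, the plan being to reverse the order of summation and then pass to generating functions. Writing $\sum_{k=1}^n\sum_{j=0}^{k-1}=\sum_{j=0}^{n-1}\sum_{k=j+1}^n$, the inner sum telescopes, $\sum_{k=j+1}^n\frac1{k-j}=H_{n-j}$, and since the $j=n$ term would carry the factor $H_0=0$ the left-hand side equals $S:=\sum_{j=0}^n(-1)^j\binom pj\binom{n-j}m H_{n-j}$. It therefore suffices to prove $S=\binom{n-p}{m-p}\bigl(H_{n-p}-H_{m-p}+H_m\bigr)$; one may assume $m\le n$ (both sides vanish otherwise) and, since both sides are meromorphic in $p$, it is enough to do so for generic (non-integer) $p$.

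First I would record the ``$\ln$-free'' companion. From $\sum_{l\ge 0}\binom lm x^l=\frac{x^m}{(1-x)^{m+1}}$ and $\sum_{j\ge 0}(-1)^j\binom pj x^j=(1-x)^p$ (the terms with $j>n-m$ being immaterial for $[x^n]$) one gets $\sum_{j=0}^n(-1)^j\binom pj\binom{n-j}m=[x^n]x^m(1-x)^{p-m-1}=(-1)^{n-m}\binom{p-m-1}{n-m}=\binom{n-p}{m-p}$, which already fixes the shape of the answer. To bring in the harmonic numbers the key ingredient is the generating function
\[
\widehat G(x):=\sum_{l\ge 0}\binom lm H_l\,x^l=\frac{x^m\bigl(H_m-\ln(1-x)\bigr)}{(1-x)^{m+1}},
\]
which I would obtain from $\sum_{l\ge 1}H_l x^l=\frac{-\ln(1-x)}{1-x}$ by applying $\frac{x^m}{m!}\frac{d^m}{dx^m}$ and the Leibniz rule, the evaluation $\sum_{i=1}^m\binom mi(i-1)!(m-i)!=m!\,H_m$ producing the $H_m$. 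Since $\binom{n-j}m H_{n-j}=[x^n]x^j\widehat G(x)$ (again with $j>n-m$ immaterial), summing the resulting series in $j$ gives $S=[x^n](1-x)^p\widehat G(x)=[x^{n-m}](1-x)^{p-m-1}\bigl(H_m-\ln(1-x)\bigr)$.

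It remains to extract this coefficient. The $H_m$-part yields $H_m(-1)^{n-m}\binom{p-m-1}{n-m}=H_m\binom{n-p}{m-p}$. For the logarithmic part I would use $(1-x)^\alpha\ln(1-x)=\partial_\alpha(1-x)^\alpha$ together with $\partial_\alpha\binom\alpha k=\binom\alpha k(H_\alpha-H_{\alpha-k})$, which give $[x^{n-m}](1-x)^{p-m-1}\ln(1-x)=(-1)^{n-m}\binom{p-m-1}{n-m}\bigl(H_{p-m-1}-H_{p-n-1}\bigr)$, hence $S=\binom{n-p}{m-p}\bigl(H_m-H_{p-m-1}+H_{p-n-1}\bigr)$. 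The concluding step, which I expect to be the delicate one, is to identify $H_{p-m-1}-H_{p-n-1}$ with $H_{m-p}-H_{n-p}$: writing $H_x=\psi(x+1)+\gamma$, this is the reflection formula $\psi(1-z)-\psi(z)=\pi\cot\pi z$ used at $z=m-p+1$ and $z=n-p+1$, the cotangent terms cancelling because $m$ and $n$ are integers. This gives $S=\binom{n-p}{m-p}\bigl(H_{n-p}-H_{m-p}+H_m\bigr)$, as required. The two main obstacles are the closed form for $\widehat G(x)$---whose extra $+H_m$ is exactly what keeps the final answer from collapsing---and this last reflection-formula identity, which reconciles the arguments $p-m-1$, $p-n-1$ thrown up by the computation with the $m-p$, $n-p$ of the statement.
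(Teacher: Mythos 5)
Your argument is correct, but it follows a genuinely different route from the paper. The paper's proof is a one-line application of its identity \eqref{rzv2imh}: taking $a_k=(-1)^k\binom{p}{n-k}\binom{k}{m}$ there (note, contrary to your opening remark, the summand \emph{does} fit the Bat\i r template, since $a_{n-j}=(-1)^{n-j}\binom{p}{j}\binom{n-j}{m}$ with $n$ fixed), the double sum collapses to $(-1)^n\sum_{k=1}^n(-1)^k\binom{p}{n-k}\binom{k}{m}H_k$, which is then evaluated by citing Spie\ss's identity \cite[Corollary 5]{spiess90}. Your reduction by interchanging the order of summation, $\sum_{k=1}^n\sum_{j=0}^{k-1}=\sum_{j=0}^{n-1}\sum_{k=j+1}^n$ with $\sum_{k=j+1}^n\frac{1}{k-j}=H_{n-j}$, lands at exactly the same single sum $S=\sum_{j=0}^n(-1)^j\binom{p}{j}\binom{n-j}{m}H_{n-j}$; the difference is that you then prove the Spie\ss\ evaluation from scratch via the generating function $\sum_{l\ge0}\binom{l}{m}H_l x^l=x^m\bigl(H_m-\ln(1-x)\bigr)(1-x)^{-m-1}$, coefficient extraction through $\partial_\alpha(1-x)^\alpha$ and $\partial_\alpha\binom{\alpha}{k}=\binom{\alpha}{k}(H_\alpha-H_{\alpha-k})$, and the digamma reflection formula to convert $H_{p-m-1}-H_{p-n-1}$ into $H_{m-p}-H_{n-p}$ (valid since $m,n$ are integers, so the cotangent terms agree). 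I checked these steps and they are sound, including the extension of the $j$-range and the restriction to generic $p$ by meromorphy. What your approach buys is a self-contained proof that does not rely on the external reference and in fact re-derives Spie\ss's harmonic-number identity; what the paper's approach buys is brevity and alignment with its stated purpose, namely exhibiting known single-sum identities as consequences of Bat\i r's double-sum identity rather than re-proving them.
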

\begin{proof}
Use
\begin{equation}
a_k = (- 1)^k \binom{p}{n - k}\binom{k}{m}
\end{equation}
in~\eqref{rzv2imh}, since (\cite[Corollary 5]{spiess90})
\begin{equation}
\sum_{k = 1}^n (- 1)^k \binom{p}{n - k}\binom{k}{m} H_k = ( - 1)^n \binom{{n - p}}{{m - p}}\left( H_{n - p} - H_{m - p} + H_m \right).
\end{equation}
\end{proof}

\begin{proposition}
If $n$ is a non-negative integer, then
\begin{equation}\label{rc5dex1}
\sum_{k = 1}^n \sum_{j = 0}^{k - 1} \binom{n}{j} \frac{x^{n - j}}{k - j} = (1 + x)^n H_n - \sum_{k = 1}^n \frac{(1 + x)^{n - k}}{k}.
\end{equation}
In particular, 
\begin{equation}
\sum_{k = 1}^n \sum_{j = 0}^{k - 1} \frac{\binom{n}{j}}{k - j} = 2^n \left(H_n - \sum_{k = 1}^n \frac{1}{2^k k} \right) \label{y3jj1nf}.
\end{equation}
\end{proposition}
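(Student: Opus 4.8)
The plan is to recognize the left-hand side of \eqref{rc5dex1} as an instance of \eqref{rzv2imh}. Taking the sequence $a_k=\binom{n}{k}x^{k}$, so that $a_{n-j}=\binom{n}{n-j}x^{n-j}=\binom{n}{j}x^{n-j}$, identity \eqref{rzv2imh} immediately converts that double sum into the single sum $\sum_{k=1}^{n}\binom{n}{k}x^{k}H_k$. Hence the whole statement reduces to the binomial--harmonic evaluation
\[
\sum_{k=1}^{n}\binom{n}{k}x^{k}H_k=(1+x)^{n}H_n-\sum_{k=1}^{n}\frac{(1+x)^{n-k}}{k},
\]
after which \eqref{rc5dex1} is just a substitution.

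To prove this auxiliary identity I would use the integral representation $H_k=\int_0^1(1-t^{k})/(1-t)\,dt$. Multiplying by $\binom{n}{k}x^{k}$, summing over $k$, and applying the binomial theorem inside the integral collapses the sum to $\int_0^1\bigl((1+x)^{n}-(1+xt)^{n}\bigr)/(1-t)\,dt$, whose integrand is regular at $t=1$ because the numerator vanishes there. The substitution $u=1+xt$ rewrites this as $\int_{1}^{1+x}\bigl((1+x)^{n}-u^{n}\bigr)/\bigl((1+x)-u\bigr)\,du$; the analogous substitution $u=(1+x)w$ in $H_n=\int_0^1(1-w^{n})/(1-w)\,dw$ gives $(1+x)^{n}H_n=\int_{0}^{1+x}\bigl((1+x)^{n}-u^{n}\bigr)/\bigl((1+x)-u\bigr)\,du$. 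Subtracting the two integrals leaves $\int_{0}^{1}\bigl((1+x)^{n}-u^{n}\bigr)/\bigl((1+x)-u\bigr)\,du$, and expanding the quotient as $\sum_{j=0}^{n-1}(1+x)^{j}u^{n-1-j}$ and integrating termwise yields $\sum_{j=0}^{n-1}(1+x)^{j}/(n-j)=\sum_{k=1}^{n}(1+x)^{n-k}/k$, exactly as required. (The same evaluation can alternatively be obtained by Abel summation against the partial sums $\sum_{i\le k}\binom{n}{i}x^{i}$, or simply quoted from the literature on binomial transforms of harmonic numbers, in the spirit of the other single-sum inputs used in this section.) Finally, \eqref{y3jj1nf} is the specialization $x=1$: then $(1+x)^{n}=2^{n}$ and $(1+x)^{n-k}=2^{n-k}$, and factoring $2^{n}$ out of $2^{n}H_n-\sum_{k=1}^{n}2^{n-k}/k$ gives $2^{n}\bigl(H_n-\sum_{k=1}^{n}1/(2^{k}k)\bigr)$.

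The only genuine content is the auxiliary evaluation; everything else is bookkeeping. The points to watch are keeping the integration limits and Jacobians straight under the substitutions $u=1+xt$ and $u=(1+x)w$, and noting that the apparent pole at $t=1$ (equivalently $u=1+x$) is removable, so no principal-value subtleties enter. Since both sides of the auxiliary identity are polynomials in $x$, it suffices to verify it for real $x>0$, where every step is manifestly legitimate, and the general case (including the degenerate $x=-1$) then follows by polynomial identity.
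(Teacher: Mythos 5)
Your proposal is correct and follows the same route as the paper: both reduce \eqref{rc5dex1} to the single sum $\sum_{k=1}^n \binom{n}{k}x^k H_k$ by taking $a_k=\binom{n}{k}x^k$ in \eqref{rzv2imh}, and then invoke the Knuth--Boyadzhiev evaluation \eqref{KB_id}, with \eqref{y3jj1nf} obtained by setting $x=1$. The only difference is that the paper simply cites \eqref{KB_id}, whereas you prove it from scratch via $H_k=\int_0^1 (1-t^k)/(1-t)\,dt$; your substitutions, the removable singularity at $t=1$, the factorization of $((1+x)^n-u^n)/((1+x)-u)$, and the polynomial-in-$x$ argument extending from $x>0$ to all $x$ all check out, so your write-up is a self-contained version of the paper's citation-based proof.
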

\begin{proof}
Work with $a_k=\binom{n}{k} x^k$ in \eqref{rzv2imh} while making use of the Knuth-Boyadzhiev identity
\begin{equation}\label{KB_id}
\sum_{k=1}^n \binom{n}{k} H_k x^k = (1 + x)^n H_n - \sum_{k = 1}^n \frac{(1 + x)^{n - k}}{k}.
\end{equation}
\end{proof}

\begin{proposition}\label{b6x4u19}
If $m$ and $n$ are non-negative integers, then
\begin{equation}
\sum_{k = 1}^n {\sum_{j = 0}^{k - 1} {\frac{{\left( {n - j} \right)!}}{{k - j}}\binom{{n}}{j}\braces{{ m}}{{n - j}}} }  = n^m H_n  - \sum_{k = 1}^n {\frac{{\left( {n - k} \right)^m }}{k}} .
\end{equation}
\end{proposition}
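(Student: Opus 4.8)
The plan is to feed the right sequence into the single-sum identity \eqref{rzv2imh} and then to recognize the resulting sum as a binomial-transform form of the Knuth--Boyadzhiev identity. I would take
\[
a_k = k!\binom{n}{k}\braces{m}{k},\qquad k=1,2,\ldots,n,
\]
with $n$ regarded as a fixed parameter. Because $\binom{n}{j}=\binom{n}{n-j}$, the substitution $i=n-j$ shows that $a_{n-j}=(n-j)!\binom{n}{j}\braces{m}{n-j}$, so the left-hand side of \eqref{rzv2imh} is exactly the double sum in the proposition, while its right-hand side is $\sum_{k=1}^{n}k!\binom{n}{k}\braces{m}{k}H_k$. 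Thus everything reduces to proving the single-sum evaluation
\[
\sum_{k=1}^{n}k!\binom{n}{k}\braces{m}{k}H_k = n^m H_n-\sum_{k=1}^{n}\frac{(n-k)^m}{k}.
\]

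For this last step I would use the binomial-transform version of the Knuth--Boyadzhiev identity: if $(b_k)_{k\ge 0}$ is any sequence and $c_N=\sum_{k=0}^{N}\binom{N}{k}b_k$ is its binomial transform, then
\[
\sum_{k=1}^{n}\binom{n}{k}H_k b_k = c_n H_n-\sum_{k=1}^{n}\frac{c_{n-k}}{k}.
\]
This is just \eqref{KB_id} stated invariantly: for $b_k=x^k$ one has $c_N=(1+x)^N$ and the identity is \eqref{KB_id} verbatim, and since for fixed $n$ both sides are linear functionals of $(b_0,\ldots,b_n)\in\mathbb{C}^{n+1}$ while the geometric vectors $(1,x,\ldots,x^n)$ with $x\neq 0$ span $\mathbb{C}^{n+1}$ (Vandermonde), the general statement follows by linearity (alternatively one cites Boyadzhiev). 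Now apply it with $b_k=k!\braces{m}{k}$. Its binomial transform is
\[
c_N=\sum_{k=0}^{N}\binom{N}{k}k!\braces{m}{k}=N^m,
\]
the classical identity expanding an ordinary power in falling factorials (equivalently, classifying the $N^m$ functions $[m]\to[N]$ by the size of their image). Substituting $c_N=N^m$ gives exactly the displayed single-sum evaluation, and the proposition follows.

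The only genuine obstacle is having the binomial-transform form of \eqref{KB_id} available; once it is, the argument is the now-familiar routine of selecting $a_k$ in \eqref{rzv2imh} together with the standard Stirling identity $\sum_{k}k!\binom{N}{k}\braces{m}{k}=N^m$. I would add one clarifying sentence about degenerate cases: for $m=0$ both sides vanish (on the left $\braces{0}{n-j}=0$ since $n-j\ge 1$, and on the right $\sum_{k=1}^n\tfrac1k-H_n=0$ under the convention $0^0=1$), and for $n=0$ all sums are empty.
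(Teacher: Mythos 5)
Your proof is correct, but it follows a different route from the paper. The paper starts from the already-established polynomial identity \eqref{rc5dex1}, substitutes an exponential for $x$ and differentiates $m$ times at $x=0$, using $\frac{d^m}{dx^m}(1-e^x)^p\big|_{x=0}=(-1)^p p!\braces{m}{p}$; in effect the Stirling numbers enter through the exponential generating function. You instead return directly to \eqref{rzv2imh} with $a_k=k!\binom{n}{k}\braces{m}{k}$, upgrade the Knuth--Boyadzhiev identity \eqref{KB_id} to its general binomial-transform form $\sum_{k=1}^n\binom{n}{k}H_k b_k=c_nH_n-\sum_{k=1}^n c_{n-k}/k$ via a Vandermonde/linearity argument, and finish with the classical expansion $\sum_k\binom{N}{k}k!\braces{m}{k}=N^m$. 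The two proofs are equivalent in substance---your two ingredients are exactly what the paper's differentiation step encodes, since applying $d^m/dx^m$ at $0$ to $e^{Nx}=\sum_k\binom{N}{k}(e^x-1)^k$ is the same Stirling identity---but your version is purely finite and calculus-free, and it yields the sequence form of Knuth--Boyadzhiev as a reusable by-product. It also quietly sidesteps a slip in the paper's printed proof: substituting $-\exp(x)$ for $x$ in \eqref{rc5dex1} actually produces the sign-twisted companion identity $\sum_{k,j}(-1)^{n-j}\binom{n}{j}\frac{(n-j)^m}{k-j}=(-1)^n n!\braces{m}{n}H_n-\sum_{k=1}^n(-1)^{n-k}\frac{(n-k)!}{k}\braces{m}{n-k}$, whereas the stated proposition corresponds to the substitution $x\mapsto e^x-1$ (equivalently, replacing $1+x$ by $e^x$); your argument lands on the stated identity directly. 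Your remark on the degenerate cases $m=0$ (with $0^0=1$) and $n=0$ is a sensible addition.
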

\begin{proof}
Write $-\exp(x)$ for $x$ in~\eqref{rc5dex1} and differentiate $m$ times through the resulting equation and evaluate at $x=0$, using
\begin{equation}
\left. {\frac{{d^m }}{{dx^m }}\left( {1 - e^x} \right)^p } \right|_{x = 0}  = ( - 1)^p p!\braces{{ m}}{p}.
\end{equation}
\end{proof}

\begin{proposition}
If $n$ is a non-negative integer, then
\begin{equation}\label{ejo444m}
\sum_{k = 1}^n \sum_{j = 0}^{k - 1} \frac{\binom{n}{j}^2}{k - j} = \binom{{2n}}{n} \left(2H_n - H_{2n} \right)
\end{equation}
and
\begin{equation}\label{w08gqaa}
\sum_{k = 1}^n \sum_{j = 0}^{k - 1} \frac{1}{k - j}\binom{n}{j}\binom{2n}{n + j} = \binom{3n}{n} H_n - \sum_{k = 1}^n \frac{1}{k}\binom{3n - k}{n - k}.
\end{equation}
\end{proposition}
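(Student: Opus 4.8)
The plan is to route both displays through identity \eqref{rzv2imh}, namely $\sum_{k=1}^n\sum_{j=0}^{k-1}\frac{a_{n-j}}{k-j}=\sum_{k=1}^n a_k H_k$, with an appropriate choice of the sequence $(a_k)$ in each case, and then to evaluate the resulting single binomial--harmonic sum on the right.

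For \eqref{ejo444m} I would set $a_k=\binom nk^2$. Since $\binom n{n-j}=\binom nj$, the inner term $a_{n-j}$ is precisely $\binom nj^2$, so the left side of \eqref{rzv2imh} becomes the left side of \eqref{ejo444m} and the right side becomes $\sum_{k=1}^n\binom nk^2 H_k$. It then remains to invoke the classical evaluation $\sum_{k=1}^n\binom nk^2 H_k=\binom{2n}n\bigl(2H_n-H_{2n}\bigr)$, which is well documented in the literature on harmonic-number sums.

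For \eqref{w08gqaa} I would set $a_k=\binom nk\binom{2n}k$. Using $\binom{2n}{n+j}=\binom{2n}{n-j}$ together with $\binom n{n-j}=\binom nj$, one checks that $a_{n-j}=\binom nj\binom{2n}{n+j}$, so \eqref{rzv2imh} converts the left side of \eqref{w08gqaa} into $\sum_{k=1}^n\binom nk\binom{2n}k H_k$. To evaluate this sum I would write $\binom{2n}k=\binom{2n}{2n-k}=[y^{2n-k}](1+y)^{2n}$, pull the coefficient functional out of the sum so that $\sum_{k=1}^n\binom nk\binom{2n}k H_k=[y^{2n}]\bigl((1+y)^{2n}\sum_{k=1}^n\binom nk H_k y^k\bigr)$, apply the Knuth--Boyadzhiev identity \eqref{KB_id} to the inner sum, and then read off $[y^{2n}](1+y)^{3n}=\binom{3n}n$ and $[y^{2n}](1+y)^{3n-k}=\binom{3n-k}{n-k}$; this yields exactly $\binom{3n}n H_n-\sum_{k=1}^n\frac1k\binom{3n-k}{n-k}$.

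The binomial-symmetry checks and the coefficient bookkeeping are routine; the only real work is supplying the two closed forms for the single sums. For \eqref{ejo444m} this is the classical $\binom nk^2$--harmonic identity (which one may instead re-derive by the same coefficient-extraction maneuver, at the cost of the auxiliary evaluation $\sum_{k=1}^n\frac1k\binom{2n-k}{n-k}=\binom{2n}n(H_{2n}-H_n)$), and for \eqref{w08gqaa} it is the Knuth--Boyadzhiev computation above. In fact the same argument proves the common generalization $\sum_{k=1}^n\binom mk\binom nk H_k=\binom{m+n}n H_n-\sum_{k=1}^n\frac1k\binom{m+n-k}{n-k}$, of which both displays are the cases $m=n$ and $m=2n$.
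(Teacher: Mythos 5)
Your proposal is correct and follows the paper's route: both identities are obtained by substituting $a_k=\binom nk^2$ and $a_k=\binom nk\binom{2n}k$, respectively, into \eqref{rzv2imh}, and the symmetry checks you perform ($\binom n{n-j}=\binom nj$, $\binom{2n}{n-j}=\binom{2n}{n+j}$) are exactly what makes $a_{n-j}$ match the summands in \eqref{ejo444m} and \eqref{w08gqaa}. The only divergence is that for $\sum_{k=1}^n\binom nk\binom{2n}k H_k$ the paper simply cites Sofo--Srivastava (and for $\sum_{k=1}^n\binom nk^2H_k$ it cites Choi--Srivastava), whereas you re-derive the former from the Knuth--Boyadzhiev identity \eqref{KB_id} by coefficient extraction; that computation is sound and makes your argument self-contained, with the bonus generalization $\sum_{k=1}^n\binom mk\binom nk H_k=\binom{m+n}nH_n-\sum_{k=1}^n\frac1k\binom{m+n-k}{n-k}$ covering both cases at once.
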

\begin{proof}
To derive Identity~\eqref{ejo444m}, plug $a_k=\binom nk^2$ in~\eqref{rzv2imh} and use the fact that~\cite{choi11}:
\begin{equation}
\sum_{k=0}^n{\binom nk^2H_k}=\binom{{2n}}{n} \left(2H_n - H_{2n} \right).
\end{equation}
Identity~\eqref{w08gqaa} is obtained by setting $a_k=\binom nk\binom{2n}k$ in~\eqref{rzv2imh} and making use of~\cite{sofo14}:
\begin{equation}
\sum_{k=0}^n{\binom nk\binom{2n}kH_k}=\binom{3n}{n} H_n - \sum_{k = 1}^n \frac{1}{k}\binom{3n - k}{n - k}.
\end{equation}
\end{proof}

\begin{proposition}
If $n$ is a non-negative integer, then
\begin{equation}\label{dixon}
\sum_{k = 1}^n {\sum_{j = 0}^{k - 1} {\frac{{( - 1)^j }}{{n - j}}\binom{{n}}{j}^3 } }   
= \begin{cases}
 ( - 1)^{n/2} \binom{{n}}{{n/2}}\binom{{3n/2}}{n}-1,&\text{if $n$ is even;} \\ 
 1,&\text{if $n$ is odd;}\\ 
 \end{cases} 
\end{equation}
\end{proposition}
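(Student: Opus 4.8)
The plan is to specialize Theorem~\ref{grat7c2} and then reduce the resulting single sum to a classical evaluation of $\sum_k(-1)^k\binom nk^3$ (Dixon's identity).

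First I would match the left-hand side of \eqref{dixon} against the left-hand side of \eqref{sk16ujw}. The summand $\frac{(-1)^j}{n-j}\binom nj^3$ has the shape $a_{n-j}/(n-j)$ exactly when $a_{n-j}=(-1)^j\binom nj^3$; substituting $\ell=n-j$ gives $a_\ell=(-1)^{n-\ell}\binom n\ell^3$. Hence Theorem~\ref{grat7c2} with $a_k=(-1)^{n-k}\binom nk^3$ yields
$$\sum_{k=1}^n\sum_{j=0}^{k-1}\frac{(-1)^j}{n-j}\binom nj^3=\sum_{k=1}^n(-1)^{n-k}\binom nk^3=(-1)^n\left(\sum_{k=0}^n(-1)^k\binom nk^3-1\right),$$
where I have pulled out the factor $(-1)^n$ (using $(-1)^{-k}=(-1)^k$) and restored the $k=0$ term, which equals $1$.

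Next I would invoke Dixon's identity in the form
$$\sum_{k=0}^n(-1)^k\binom nk^3=\begin{cases}(-1)^{n/2}\dfrac{(3n/2)!}{((n/2)!)^3},& n\text{ even};\\[2mm] 0,& n\text{ odd},\end{cases}$$
and rewrite $\dfrac{(3n/2)!}{((n/2)!)^3}=\binom{n}{n/2}\binom{3n/2}{n}$. Feeding the two cases into the previous display gives \eqref{dixon}: for even $n$ the prefactor $(-1)^n=1$ produces $(-1)^{n/2}\binom{n}{n/2}\binom{3n/2}{n}-1$, while for odd $n$ we get $(-1)(0-1)=1$.

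I do not expect a genuine obstacle here; the main points requiring care are the index shift $j\mapsto n-j$, the bookkeeping of signs, and correctly restoring the $k=0$ term that is absent on the right-hand side of \eqref{sk16ujw}. The only external ingredient is Dixon's identity, which is standard, and everything else is a direct specialization of \eqref{sk16ujw}.
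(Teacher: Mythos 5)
Your proposal is correct and follows essentially the same route as the paper: specialize Theorem~\ref{grat7c2} and invoke Dixon's identity. In fact your choice $a_k=(-1)^{n-k}\binom nk^3$ (rather than the paper's stated $a_k=\binom nk^3$, which as written omits the alternating sign) and your explicit restoration of the $k=0$ term make the bookkeeping slightly more careful than the paper's own proof sketch.
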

\begin{proof}
Set $a_k=\binom nk^3$ in~\eqref{sk16ujw} and use Dixon's identity~\cite{dixon03}:
\begin{equation}
\sum_{k = 0}^n {(-1)^k\binom{{n}}{k}^3 }    
= \begin{cases}
 ( - 1)^{n/2} \binom{{n}}{{n/2}}\binom{{3n/2}}{n},&\text{if $n$ is even;} \\ 
 0,&\text{if $n$ is odd.} 
 \end{cases} 
\end{equation}
\end{proof}

\begin{proposition}
If $n$ is a non-negative integer, then
\begin{equation}
\sum_{k = 1}^n \sum_{j = 0}^{k - 1} \frac{2^{2j}}{k - j} \binom{2(n - j)}{n - j} = 2^{2n + 1} + \left(H_n - 2\right)(2n + 1)\binom{2n}{n}
\end{equation}
and
\begin{equation}
\sum_{k = 1}^n \sum_{j = 0}^{k - 1} \frac{2^{2j}}{2k - 2j - 1} \binom{2(n - j)}{n - j} = \left(O_{n + 1} - 1 \right)(2n + 1)\binom{2n}{n}.
\end{equation}
\end{proposition}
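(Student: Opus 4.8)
The plan is to recognize both sums as instances of \eqref{rzv2imh} and \eqref{ml4vyu4}. Abbreviating $c_k=\binom{2k}{k}/4^k$, I would apply those two identities with $a_k=4^{n-k}\binom{2k}{k}=4^nc_k$, so that $a_{n-j}=4^{j}\binom{2(n-j)}{n-j}=2^{2j}\binom{2(n-j)}{n-j}$. This collapses the double sums into single sums,
\begin{align*}
\sum_{k=1}^n\sum_{j=0}^{k-1}\frac{2^{2j}}{k-j}\binom{2(n-j)}{n-j}&=4^n\sum_{k=1}^nc_kH_k,\\
\sum_{k=1}^n\sum_{j=0}^{k-1}\frac{2^{2j}}{2k-2j-1}\binom{2(n-j)}{n-j}&=4^n\sum_{k=1}^nc_kO_k,
\end{align*}
so everything reduces to evaluating $\sum_{k=1}^nc_kH_k$ and $\sum_{k=1}^nc_kO_k$ in closed form.

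For this I would lean on two elementary facts about $c_k$: the classical partial-sum identity $C_k:=\sum_{j=1}^kc_j=(2k+1)c_k-1$, which follows in one line by induction from $(2k+1)c_k-(2k-1)c_{k-1}=c_k$, and the ratio $c_{k+1}=\frac{2k+1}{2k+2}c_k$. Summation by parts then gives
\begin{equation*}
\sum_{k=1}^nc_kH_k=C_nH_n-\sum_{k=1}^{n-1}\frac{C_k}{k+1}=\bigl((2n+1)c_n-1\bigr)H_n-\sum_{k=1}^{n-1}\frac{(2k+1)c_k-1}{k+1},
\end{equation*}
and the key observation is $\frac{(2k+1)c_k}{k+1}=2c_{k+1}$, which telescopes the remaining sum to $2(C_n-c_1)=2(2n+1)c_n-3$; combining with $\sum_{k=1}^{n-1}\frac1{k+1}=H_n-1$ yields $\sum_{k=1}^nc_kH_k=(2n+1)c_n(H_n-2)+2$. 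Multiplying through by $4^n$ and using $4^nc_n=\binom{2n}{n}$ produces the first identity.

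The odd case is entirely parallel: summation by parts against $O_k$ uses $O_{k+1}-O_k=\frac1{2k+1}$, and here the telescoping rests on the trivial $\frac{(2k+1)c_k}{2k+1}=c_k$, so the remaining sum becomes $C_{n-1}=(2n-1)c_{n-1}-1=2nc_n-1$; together with $\sum_{k=1}^{n-1}\frac1{2k+1}=O_n-1$ this gives $\sum_{k=1}^nc_kO_k=c_n\bigl((2n+1)O_n-2n\bigr)=(2n+1)(O_{n+1}-1)c_n$, and multiplying by $4^n$ gives the second identity. The only mildly delicate point is keeping track of the boundary terms in the Abel summation — the value $c_1=\frac12$ and the shift of index when telescoping — but there is no genuine obstacle; alternatively one can dispense with summation by parts and prove the two single-sum evaluations directly by induction on $n$, the inductive step reducing to $H_n-H_{n-1}=\frac1n$ (respectively $O_{n+1}-O_n=\frac1{2n+1}$) after invoking $c_{n-1}=\frac{2n}{2n-1}c_n$.
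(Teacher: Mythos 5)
Your reduction is exactly the paper's: both identities follow from \eqref{rzv2imh} and \eqref{ml4vyu4} with $a_k$ proportional to $2^{-2k}\binom{2k}{k}$ (your extra factor $4^n$ is harmless by linearity), so the whole proposition rests on the closed forms of $\sum_{k=1}^n c_kH_k$ and $\sum_{k=1}^n c_kO_k$ with $c_k=4^{-k}\binom{2k}{k}$. The difference is that the paper simply imports those two evaluations from the companion preprint of Adegoke and Frontczak, whereas you prove them from scratch by Abel summation against the partial sums $C_k=\sum_{j\le k}c_j=(2k+1)c_k-1$, using $2kc_k=(2k-1)c_{k-1}$ and $\tfrac{(2k+1)c_k}{k+1}=2c_{k+1}$. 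I checked the details: the partial-sum identity, the index shift giving $2(C_n-c_1)=2(2n+1)c_n-3$, the boundary terms $\sum_{k=1}^{n-1}\tfrac1{k+1}=H_n-1$ and $\sum_{k=1}^{n-1}\tfrac1{2k+1}=O_n-1$, and the final conversion $(2n+1)O_n-2n=(2n+1)(O_{n+1}-1)$ are all correct, and the resulting single-sum formulas agree with the ones the paper cites. So your argument is correct and has the advantage of being self-contained, at the cost of a somewhat longer computation; the paper's version is shorter but depends on an external (still unpublished) reference for the key sums.
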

\begin{proof}
Use $a_k=2^{-2k}\binom{2k}k$ in~\eqref{rzv2imh} and also in~\eqref{ml4vyu4} and employ the following identities from~\cite{Adegoke}:
\begin{align*}
\sum_{k = 1}^n 2^{- 2k} \binom{2k}{k} H_k = 2 + \left(H_n - 2 \right)(2n + 1) 2^{- 2n} \binom{{2n}}{n},\\
\sum_{k = 1}^n 2^{- 2k} \binom{2k}{k} O_k = \left(O_{n + 1} - 1 \right)(2n + 1) 2^{- 2n} \binom{{2n}}{n}.
\end{align*}
\end{proof}

\begin{proposition}
If $n$ is a non-negative integer, then
\begin{equation}
\sum_{k = 1}^n \sum_{j = 0}^{k - 1} \frac{2^{2j}}{k - j} \binom{2(n - j)}{n - j} O_{n - j} = \sum_{k = 1}^n \sum_{j = 0}^{k - 1} \frac{2^{2j} }{2k - 2j - 1} \binom{2(n - j)}{n - j} H_{n - j}.
\end{equation}
\end{proposition}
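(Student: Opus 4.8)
The plan is to evaluate the two sides of the claimed identity \emph{separately}, each by a single application of the general theorem, and to observe that the two resulting single sums are literally the same. No closed form for that single sum is needed; the whole point is its symmetry under interchanging $H_k$ and $O_k$.

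First I would apply \eqref{rzv2imh} with the choice $a_k = 2^{-2k}\binom{2k}{k}O_k$, so that $a_{n-j} = 2^{-2(n-j)}\binom{2(n-j)}{n-j}O_{n-j}$. Multiplying the identity through by $2^{2n}$ and using $2^{2n}\cdot 2^{-2(n-j)} = 2^{2j}$ gives
\[
\sum_{k=1}^n\sum_{j=0}^{k-1}\frac{2^{2j}}{k-j}\binom{2(n-j)}{n-j}O_{n-j}
= 2^{2n}\sum_{k=1}^n 2^{-2k}\binom{2k}{k}O_k H_k .
\]
Next I would apply \eqref{ml4vyu4} with $a_k = 2^{-2k}\binom{2k}{k}H_k$. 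The identical manipulation — substituting $a_{n-j}$, multiplying by $2^{2n}$, and simplifying the power of two — yields
\[
\sum_{k=1}^n\sum_{j=0}^{k-1}\frac{2^{2j}}{2k-2j-1}\binom{2(n-j)}{n-j}H_{n-j}
= 2^{2n}\sum_{k=1}^n 2^{-2k}\binom{2k}{k}H_k O_k .
\]

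Finally, since $O_k H_k = H_k O_k$, the right-hand sides of the last two displays coincide, and the stated equality of the two double sums follows immediately. I do not anticipate any genuine obstacle: the argument uses only the two particular cases \eqref{rzv2imh} and \eqref{ml4vyu4} already established, and the sum $\sum_{k=1}^n 2^{-2k}\binom{2k}{k}H_k O_k$ (which does not appear to admit an elementary closed form) never needs to be evaluated. The only point requiring care is the bookkeeping of the factor $2^{2n}$ together with the index shift $n-j$ appearing simultaneously inside the binomial coefficient, the power of two, and the harmonic (resp. odd harmonic) factor.
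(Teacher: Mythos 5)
Your proof is correct and follows essentially the same route as the paper: both apply \eqref{rzv2imh} with $a_k=2^{-2k}\binom{2k}{k}O_k$ and \eqref{ml4vyu4} with $a_k=2^{-2k}\binom{2k}{k}H_k$. The paper additionally cites Bat\i r's closed-form evaluation of $\sum_{k=1}^n 2^{-2k}\binom{2k}{k}O_kH_k$, which, as you rightly observe, is not needed, since both double sums reduce to that same single sum and the equality follows without evaluating it.
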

\begin{proof}
This result follows from~\eqref{rzv2imh} and~\eqref{ml4vyu4} and the following identity~\cite{batir21}:
\begin{align*}
\sum_{k = 1}^n 2^{- 2k} \binom{2k}{k} O_k H_k &= (2n + 1) 2^{- 2n} \binom{2n}{n} \left(H_n - 2\right)\left(O_n - \frac{2n}{2n + 1} \right) \\
&\qquad + \frac{2n + 1}{2(4n - 1)} \binom{2n}{n} - 2.
\end{align*}
\end{proof}

\begin{proposition}
If $n$ is a non-negative integer and $s$ is a complex number that is not a negative integer, then
\begin{equation}\label{oq208yi}
\sum_{k = 1}^n \sum_{j = 0}^{k - 1} \frac{(- 1)^j s}{k - j} \binom{s}{n - j} = s\binom{s - 1}{n} H_n + \binom{s - 1}{n} - (- 1)^n.
\end{equation}
\end{proposition}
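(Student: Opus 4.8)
The plan is to feed a suitable sequence into the single-sum identity \eqref{rzv2imh}. Choosing $a_k = (-1)^{n-k}\, s \binom{s}{k}$ gives $a_{n-j} = (-1)^{j}\, s \binom{s}{n-j}$, so the left-hand side of \eqref{rzv2imh} becomes exactly the double sum on the left of \eqref{oq208yi}, while its right-hand side becomes
$$\sum_{k=1}^n a_k H_k = (-1)^n s \sum_{k=1}^n (-1)^k \binom{s}{k} H_k.$$
Hence the whole identity reduces to finding a closed form for the alternating binomial--harmonic sum $\sum_{k=1}^n (-1)^k \binom{s}{k} H_k$.

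To evaluate this sum I would use Abel summation together with two standard facts: the alternating partial-sum identity $\sum_{k=0}^m (-1)^k \binom{s}{k} = (-1)^m \binom{s-1}{m}$ and the absorption identity $\frac{1}{k+1}\binom{s-1}{k} = \frac{1}{s}\binom{s}{k+1}$. Writing $H_{k+1}-H_k = \frac{1}{k+1}$ and noting $H_0 = 0$, Abel summation gives
$$\sum_{k=1}^n (-1)^k \binom{s}{k} H_k = (-1)^n \binom{s-1}{n} H_n - \sum_{k=0}^{n-1} \frac{(-1)^k}{k+1}\binom{s-1}{k}.$$
The remaining sum is handled by the absorption identity, which rewrites it as $\frac{1}{s}\sum_{m=1}^n (-1)^{m-1}\binom{s}{m}$; a further application of the partial-sum identity collapses this to $\frac{1}{s}\bigl(1 - (-1)^n \binom{s-1}{n}\bigr)$. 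Therefore
$$\sum_{k=1}^n (-1)^k \binom{s}{k} H_k = (-1)^n \binom{s-1}{n} H_n + \frac{1}{s}\bigl((-1)^n \binom{s-1}{n} - 1\bigr).$$

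Multiplying through by $(-1)^n s$ and collecting terms yields precisely $s\binom{s-1}{n}H_n + \binom{s-1}{n} - (-1)^n$, the right-hand side of \eqref{oq208yi}. Since both sides of \eqref{oq208yi} are polynomials in $s$, the identity in fact extends to all complex $s$; the stated hypothesis merely guarantees that the binomial coefficients and the factor $1/s$ occurring in the derivation are meaningful. I do not expect a genuine obstacle here: the only step requiring care is the Abel summation, and after that everything is routine simplification — the main pitfall is keeping track of signs through the telescoping sum. Alternatively, one could simply cite a known closed form for $\sum_{k} (-1)^k \binom{s}{k} H_k$ and skip the summation by parts.
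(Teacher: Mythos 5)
Your proof is correct, and the core reduction is the same as the paper's: you plug $a_k=(-1)^{n-k}s\binom{s}{k}$ (the paper uses $a_k=(-1)^k\binom{s}{k}$, the same choice up to the constant factor $(-1)^n s$) into \eqref{rzv2imh}, so everything hinges on the closed form of $\sum_{k=1}^n(-1)^k\binom{s}{k}H_k$. Where you differ is in how that sum is handled: the paper cites Bat\i r's identity \cite[Corollary~3]{batir21} for $\sum_{k=1}^n\binom{n+s}{k}(-1)^kH_k$ and applies the shift $s\mapsto s-n$, whereas you derive the closed form from scratch by Abel summation, using $\sum_{k=0}^m(-1)^k\binom{s}{k}=(-1)^m\binom{s-1}{m}$ and the absorption identity $\frac{1}{k+1}\binom{s-1}{k}=\frac1s\binom{s}{k+1}$; I checked the telescoping and signs and they work out, giving $\sum_{k=1}^n(-1)^k\binom{s}{k}H_k=(-1)^n\binom{s-1}{n}H_n+\frac1s\bigl((-1)^n\binom{s-1}{n}-1\bigr)$, which after multiplication by $(-1)^n s$ is exactly the right-hand side of \eqref{oq208yi}. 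Your self-contained derivation buys independence from the cited reference (and, as a bonus, it confirms that in the paper's shifted intermediate identity the final term should read $-1$ rather than $-\frac1s$ once both sides carry the factor $s$ — a typo that does not affect the proposition itself), while the paper's route is shorter at the cost of invoking an external result. Your closing remark that the identity extends to all complex $s$ by polynomiality is also sound, since both sides of \eqref{oq208yi} are polynomials in $s$ and the restriction only enters through the auxiliary division by $s$.
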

\begin{proof}
Batir derived the identity~\cite[Corollary 3]{batir21}
\begin{equation}
\sum_{k = 1}^n \binom{n+s}{k} (- 1)^k H_k = (-1)^n \binom{n + s - 1}{n} H_n + \frac{(-1)^n}{n+s} \binom{n + s - 1}{n} - \frac{1}{n+s}.
\end{equation}
Using the transformation $s\mapsto s-n$ this identity can be stated equivalently as
\begin{equation}
\sum_{k = 1}^n \binom{s}{k} (- 1)^k s H_k = s (-1)^n \binom{s - 1}{n} H_n + (-1)^n \binom{s - 1}{n} - \frac{1}{s}.
\end{equation}
Now, set $a_k=(-1)^k\binom{s}{k}$ in~\eqref{rzv2imh}.
\end{proof}

\begin{corollary}
If $n$ is a non-negative integer then
\begin{equation}
\sum_{k=1}^n \sum_{j=0}^{k-1} \frac{1}{k-j}\frac{2^{2j}}{2(n-j)-1} \binom{2(n-j)}{n-j} = 2^{2n+1} - \binom{2n}{n}(H_n+2)
\end{equation}
and
\begin{equation}
\sum_{k=1}^n \sum_{j=0}^{k-1} \frac{2^{2j}}{k-j} \binom{2(n-j)}{n-j} = 2^{2n+1} + (2n+1) \binom{2n}{n}(H_n-2).
\end{equation}
\end{corollary}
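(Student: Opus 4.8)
The plan is to deduce both identities directly from \eqref{oq208yi} by specializing the free parameter $s$ to $1/2$ and to $-1/2$. The only inputs beyond \eqref{oq208yi} are the classical half-integer binomial evaluations $\binom{-1/2}{m}=(-1)^m 4^{-m}\binom{2m}{m}$ and $\binom{1/2}{m}=(-1)^{m-1}\bigl(4^{m}(2m-1)\bigr)^{-1}\binom{2m}{m}$, together with the relation $\binom{-3/2}{m}=(2m+1)\binom{-1/2}{m}=(2m+1)(-1)^m 4^{-m}\binom{2m}{m}$, which follows from $\binom{x-1}{m}=\binom{x}{m}(x-m)/x$. Both $1/2$ and $-1/2$ are admissible in \eqref{oq208yi} since neither is a negative integer, and since $j\le k-1\le n-1$ we have $n-j\ge 1$ throughout, so none of the denominators $k-j$ or $2(n-j)-1$ vanishes.

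For the first identity I would put $s=1/2$ in \eqref{oq208yi}. Writing $\binom{1/2}{n-j}=(-1)^{n-j-1}\bigl(4^{n-j}(2(n-j)-1)\bigr)^{-1}\binom{2(n-j)}{n-j}$ and $4^{-(n-j)}=2^{2j}4^{-n}$, the left-hand summand of \eqref{oq208yi} becomes $\dfrac{(-1)^{n-1}}{2\cdot 4^{n}}\cdot\dfrac{2^{2j}}{(k-j)\bigl(2(n-j)-1\bigr)}\binom{2(n-j)}{n-j}$, so the left side equals $\dfrac{(-1)^{n-1}}{2\cdot 4^{n}}$ times the target double sum. On the right of \eqref{oq208yi}, $\binom{s-1}{n}=\binom{-1/2}{n}=(-1)^n 4^{-n}\binom{2n}{n}$, giving $(-1)^n\bigl(4^{-n}\binom{2n}{n}(H_n/2+1)-1\bigr)$. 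Equating the two sides and multiplying through by $(-1)^{n-1}2\cdot 4^{n}=(-1)^{n-1}2^{2n+1}$ produces exactly $2^{2n+1}-\binom{2n}{n}(H_n+2)$.

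For the second identity I would put $s=-1/2$ in \eqref{oq208yi}. Here $\binom{-1/2}{n-j}=(-1)^{n-j}4^{-(n-j)}\binom{2(n-j)}{n-j}$, so the left-hand summand becomes $-\dfrac{(-1)^n}{2\cdot 4^{n}}\cdot\dfrac{2^{2j}}{k-j}\binom{2(n-j)}{n-j}$ and the left side is $-\dfrac{(-1)^n}{2\cdot 4^{n}}$ times the target sum. The right side of \eqref{oq208yi} uses $\binom{s-1}{n}=\binom{-3/2}{n}=(2n+1)(-1)^n 4^{-n}\binom{2n}{n}$ and equals $\dfrac{(-1)^n}{4^{n}}(2n+1)\binom{2n}{n}(1-H_n/2)-(-1)^n$. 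Equating and multiplying by $-(-1)^n 2\cdot 4^{n}$ yields $2^{2n+1}+(2n+1)\binom{2n}{n}(H_n-2)$. The argument is entirely mechanical; the only point demanding care is the sign-and-power bookkeeping for the half-integer binomial coefficients, and a numerical check at $n=1$ and $n=2$ confirms the additive and multiplicative constants. There is no substantive obstacle.
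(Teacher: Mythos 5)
Your proposal is correct and is essentially the paper's own proof: the authors likewise obtain both identities by evaluating \eqref{oq208yi} at $s=1/2$ and $s=-1/2$ and invoking the same half-integer binomial evaluations $\binom{\pm 1/2}{r}$ and $\binom{-3/2}{r}$. Your sign-and-power bookkeeping checks out, so there is nothing further to add.
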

\begin{proof}
Evaluate~\eqref{oq208yi} at $s=-1/2$ and $s=1/2$, and keep in mind that
\begin{equation}
\binom{1/2}{r} = (-1)^{r+1} \frac{2^{-2r}}{2r-1} \binom{2r}{r}, \quad \binom{-1/2}{r} = (-1)^r 2^{-2r} \binom{2r}{r},
\end{equation}
and
\begin{equation}
\binom{-3/2}{r} = (-1)^r (2r+1) 2^{-2r} \binom{2r}{r}.
\end{equation}
\end{proof}

\begin{proposition}
If $n$ is a non-negative integer, then
\begin{equation}
\sum_{k = 1}^n {\sum_{j = 0}^{k - 1} {\frac{{( - 1)^j }}{{k - j}}\binom{{n}}{j}^{ - 1} } }  = \left( {n + 1} \right)\left( {\frac{{H_{n + 1} }}{{n + 2}} - \frac{{( - 1)^n  + 1}}{{\left( {n + 2} \right)^2 }}} \right).
\end{equation}
\end{proposition}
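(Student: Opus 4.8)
The plan is to feed the identity \eqref{rzv2imh} with a suitable choice of $(a_k)$ and then reduce the problem to a single sum of reciprocal binomial coefficients weighted by harmonic numbers. Matching the left-hand side of the statement against $\sum_{k=1}^n\sum_{j=0}^{k-1} a_{n-j}/(k-j)$, I would take $a_k=(-1)^{n-k}\binom{n}{k}^{-1}$, so that $a_{n-j}=(-1)^{j}\binom{n}{n-j}^{-1}=(-1)^{j}\binom{n}{j}^{-1}$ on the relevant range $0\le j\le k-1\le n-1$. Then \eqref{rzv2imh} yields
$$\sum_{k = 1}^n \sum_{j = 0}^{k - 1} \frac{(-1)^j}{k-j}\binom{n}{j}^{-1}=\sum_{k=1}^n a_kH_k=(-1)^n\sum_{k=1}^n\frac{(-1)^kH_k}{\binom nk},$$
and it remains to evaluate $T_n:=\sum_{k=1}^n(-1)^kH_k\binom nk^{-1}$ and to verify that $(-1)^nT_n$ coincides with the right-hand side of the claim.

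For $T_n$ I would use the beta-integral representation $\binom nk^{-1}=(n+1)\int_0^1 t^k(1-t)^{n-k}\,dt$, interchange the finite sum and the integral, and apply the finite generating function
$$\sum_{k=1}^n H_k w^k=\frac{1}{1-w}\left(\sum_{j=1}^n\frac{w^j}{j}-w^{n+1}H_n\right)$$
at $w=-t/(1-t)$; this last identity is an elementary consequence of swapping the order of summation, and also follows by putting $a_k=w^k$ in \eqref{rzv2imh} and comparing with \eqref{r=0}. The key simplification is that $1-w=1/(1-t)$, so the factor $(1-t)^n$ coming from the beta integrals absorbs the denominator and leaves a polynomial integrand: one obtains
$$T_n=(n+1)\int_0^1\left(\sum_{j=1}^n\frac{(-1)^j t^j(1-t)^{n+1-j}}{j}-(-1)^{n+1}t^{n+1}H_n\right)dt.$$
Integrating term by term using $\int_0^1 t^j(1-t)^{n+1-j}\,dt=\big((n+2)\binom{n+1}{j}\big)^{-1}$ and $\int_0^1 t^{n+1}\,dt=1/(n+2)$ gives
$$T_n=\frac{n+1}{n+2}\left(\sum_{j=1}^n\frac{(-1)^j}{j\binom{n+1}{j}}+(-1)^nH_n\right).$$

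To finish, I would reindex the remaining sum using $j\binom{n+1}{j}=(n+1)\binom{n}{j-1}$, which turns $\sum_{j=1}^n (-1)^j/\big(j\binom{n+1}{j}\big)$ into $-\tfrac{1}{n+1}\sum_{i=0}^{n-1}(-1)^i\binom ni^{-1}$, and then invoke the classical evaluation $\sum_{i=0}^n(-1)^i\binom ni^{-1}=\tfrac{n+1}{n+2}(1+(-1)^n)$ to isolate the $i=n$ term. Collecting everything and using $H_{n+1}=H_n+\tfrac{1}{n+1}$ should give $T_n=\tfrac{n+1}{n+2}\big((-1)^nH_{n+1}-\tfrac{1+(-1)^n}{n+2}\big)$; multiplying by $(-1)^n$ and noting $(-1)^n(1+(-1)^n)=1+(-1)^n$ then produces exactly $(n+1)\big(\tfrac{H_{n+1}}{n+2}-\tfrac{(-1)^n+1}{(n+2)^2}\big)$. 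The main obstacle is the evaluation of $T_n$: everything hinges on handling the reciprocal-binomial-times-harmonic-number sum, where the delicate points are the sign bookkeeping in the beta-integral step and the reindexing; should a packaged identity for $\sum_k(-1)^kH_k\binom nk^{-1}$ be available in the literature (as for the other propositions in this section), one could instead cite it directly and substitute into \eqref{rzv2imh}.
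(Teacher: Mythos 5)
Your proof is correct, but it takes a genuinely different route from the paper. The paper's proof is a one-liner: it sets $a_k=(-1)^k\binom{n}{k}^{-1}$ in \eqref{rzv2imh} and cites Bat\i r's evaluation of $\sum_{k=1}^n(-1)^k\binom{n}{k}^{-1}H_k$ from \cite{batir21}. You instead take $a_k=(-1)^{n-k}\binom{n}{k}^{-1}$ (the same choice up to the harmless factor $(-1)^n$) and then evaluate $T_n=\sum_{k=1}^n(-1)^kH_k\binom{n}{k}^{-1}$ from scratch, via the beta-integral representation $\binom{n}{k}^{-1}=(n+1)\int_0^1t^k(1-t)^{n-k}\,dt$, the finite generating function $\sum_{k=1}^nH_kw^k=\frac{1}{1-w}\bigl(\sum_{j=1}^nw^j/j-w^{n+1}H_n\bigr)$ at $w=-t/(1-t)$, and the classical sum $\sum_{i=0}^n(-1)^i\binom{n}{i}^{-1}=\frac{n+1}{n+2}\bigl(1+(-1)^n\bigr)$. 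I checked the intermediate steps ($\int_0^1t^j(1-t)^{n+1-j}\,dt=\frac{1}{(n+2)\binom{n+1}{j}}$, the reindexing via $j\binom{n+1}{j}=(n+1)\binom{n}{j-1}$, and the final recombination using $H_{n+1}=H_n+\frac{1}{n+1}$), and they all hold; your closed form $T_n=\frac{n+1}{n+2}\bigl((-1)^nH_{n+1}-\frac{1+(-1)^n}{n+2}\bigr)$ is right, and multiplying by $(-1)^n$ gives exactly the stated right-hand side. What your longer argument buys is self-containedness: it avoids the external citation and, as a side benefit, it pins down the sign of the auxiliary sum --- note that the identity as quoted in the paper from \cite{batir21} is missing a factor $(-1)^n$ on its right-hand side (e.g.\ at $n=3$ the sum equals $-\frac{5}{3}$, not $\frac{5}{3}$), although this cancels against the $(-1)^n$ coming from $a_{n-j}$ so the proposition itself is stated correctly, as your computation independently confirms. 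The paper's approach buys brevity.
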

\begin{proof}
Use $a_k=(-1)^k\binom nk^{-1}$ in~\eqref{rzv2imh} together with the following identity~\cite{batir21}:
\begin{equation}
\sum_{k = 1}^n {( - 1)^k \binom{{n}}{k}^{ - 1} H_k }  = \left( {n + 1} \right)\left( {\frac{{H_{n + 1} }}{{n + 2}} - \frac{{( - 1)^n  + 1}}{{\left( {n + 2} \right)^2 }}} \right).
\end{equation}
\end{proof}

\begin{lemma}
If $n$ is a non-negative integer and $p$ and $q$ are complex numbers that are not negative integers, then
\begin{equation}\label{wcjr4sk}
\sum_{k = 1}^n {\binom{{k + q}}{p}}  = \frac{{n + q + 1}}{{p + 1}}\binom{{n + q}}{p} - \binom{{q}}{{p + 1}} - \binom{q}{p}
\end{equation}
and
\begin{align}\label{q1iouiz}
\sum_{k = 1}^n {\binom{{k + q}}{p}H_{k + q} }  &= \binom{{n + q}}{p}\left( {\frac{1}{{p + 1}} + \frac{{n + q + 1}}{{p + 1}}\left( {H_{n + q}  - \frac{1}{{p + 1}}} \right)} \right)\nonumber\\
&\qquad - \binom{{q}}{{p + 1}}\left( {H_q  - \frac{1}{{p + 1}}} \right) - \binom{{q}}{p}H_q .
\end{align}
\end{lemma}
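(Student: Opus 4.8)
The plan is to establish both identities by discrete telescoping, relying only on the Pascal recurrence $\binom{m}{r}+\binom{m}{r-1}=\binom{m+1}{r}$, the absorption identity $\binom{m}{r}=\frac{m}{r}\binom{m-1}{r-1}$, and $H_{m+1}-H_m=\frac{1}{m+1}$. Since all of these hold for the Gamma/digamma continuations of $\binom{\cdot}{\cdot}$ and $H_{\cdot}$, the complex parameters $p,q$ cause no difficulty, and the sums in question are finite so no convergence issue arises.

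For~\eqref{wcjr4sk} I would write $\binom{k+q}{p}=\binom{k+q+1}{p+1}-\binom{k+q}{p+1}$ by Pascal; then $\sum_{k=1}^{n}\binom{k+q}{p}$ telescopes to $\binom{n+q+1}{p+1}-\binom{q+1}{p+1}$. Applying absorption in the form $\binom{n+q+1}{p+1}=\frac{n+q+1}{p+1}\binom{n+q}{p}$ and splitting $\binom{q+1}{p+1}=\binom{q}{p+1}+\binom{q}{p}$ by Pascal gives exactly the right-hand side of~\eqref{wcjr4sk}.

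For~\eqref{q1iouiz} the one bit of insight is to telescope the $H$-weighted analogue. Applying the discrete product rule to $\binom{k+q+1}{p+1}H_{k+q+1}-\binom{k+q}{p+1}H_{k+q}$, and then using Pascal for the binomial difference, $H_{k+q+1}-H_{k+q}=\frac{1}{k+q+1}$ for the harmonic difference, and absorption, this forward difference collapses neatly to $\binom{k+q}{p}\!\left(H_{k+q}+\frac{1}{p+1}\right)$. Summing over $1\le k\le n$, the left side telescopes; isolating the target sum and substituting~\eqref{wcjr4sk} (in its telescoped form $\binom{n+q+1}{p+1}-\binom{q+1}{p+1}$) for the residual $\sum\binom{k+q}{p}$, I obtain the compact closed form
\[
\sum_{k=1}^{n}\binom{k+q}{p}H_{k+q}
=\binom{n+q+1}{p+1}\!\left(H_{n+q+1}-\frac{1}{p+1}\right)-\binom{q+1}{p+1}\!\left(H_{q+1}-\frac{1}{p+1}\right).
\]

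What remains is to rewrite this so that every argument is $n+q$ or $q$ rather than $n+q+1$ or $q+1$. On the first term I would expand $H_{n+q+1}=H_{n+q}+\frac{1}{n+q+1}$ and absorb $\binom{n+q+1}{p+1}=\frac{n+q+1}{p+1}\binom{n+q}{p}$, whereupon the stray $\frac{1}{n+q+1}$ cancels against the numerator and produces the bracket $\frac{1}{p+1}+\frac{n+q+1}{p+1}\!\left(H_{n+q}-\frac{1}{p+1}\right)$ of~\eqref{q1iouiz}; on the boundary term I would similarly expand $H_{q+1}$ and split the binomial by Pascal, noting that the cross term $\frac{1}{q+1}\binom{q+1}{p+1}$ equals $\frac{1}{p+1}\binom{q}{p}$ by absorption and hence cancels, leaving $\binom{q}{p+1}\!\left(H_q-\frac{1}{p+1}\right)+\binom{q}{p}H_q$. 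The main obstacle is not conceptual but purely one of bookkeeping: tracking the $\frac{1}{p+1}$ and $\frac{1}{q+1}$ correction terms carefully enough that the three elementary identities conspire to give precisely~\eqref{q1iouiz}. (Equivalently one could run Abel summation directly with partial sums supplied by~\eqref{wcjr4sk}; the telescoping route above is chosen because it avoids an intermediate sum of the type $\sum 1/(k+q+1)$.)
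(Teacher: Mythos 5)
Your argument is correct, and it checks out in detail: the telescoping of $\binom{k+q+1}{p+1}H_{k+q+1}-\binom{k+q}{p+1}H_{k+q}$ does collapse to $\binom{k+q}{p}\bigl(H_{k+q}+\tfrac{1}{p+1}\bigr)$ via Pascal, absorption and $H_{m+1}-H_m=\tfrac{1}{m+1}$, and your subsequent bookkeeping (absorbing $\binom{n+q+1}{p+1}$, splitting $\binom{q+1}{p+1}$, and cancelling the cross term $\tfrac{1}{q+1}\binom{q+1}{p+1}=\tfrac{1}{p+1}\binom{q}{p}$) reproduces \eqref{q1iouiz} exactly. For \eqref{wcjr4sk} you follow the same route as the paper, namely telescoping Pascal's formula $\binom{k+q}{p}=\binom{k+q+1}{p+1}-\binom{k+q}{p+1}$. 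For \eqref{q1iouiz}, however, your method genuinely differs: the paper obtains it by differentiating \eqref{wcjr4sk} first with respect to $p$ and then with respect to $q$ (through the Gamma-function representation of the binomial coefficients, which turns derivatives into digamma, i.e.\ harmonic-number, factors) and then eliminating the unwanted $H_{q-p-1}$ terms between the two resulting equations. Your discrete summation-by-parts route is more elementary and self-contained -- it uses only the three finite identities you list, all valid for the Gamma/digamma continuations under the stated hypotheses on $p$ and $q$, and it never introduces $H_{q-p-1}$ or any elimination step -- whereas the paper's differentiation argument is quicker to state once one is comfortable manipulating digamma derivatives of parametrized binomial identities, and it extends mechanically to produce further weighted sums (e.g.\ with $H^{(2)}$ weights) by differentiating again. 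Either proof suffices for the lemma as used later in the paper.
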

\begin{proof}
Identity~\eqref{wcjr4sk} is a consequence of Pascal's formula:
\begin{equation}
\binom{{k + q}}{p} = \binom{{k + q + 1}}{{p + 1}} - \binom{{k + q}}{{p + 1}}.
\end{equation}
Identity~\eqref{q1iouiz} is obtained by differentiating~\eqref{wcjr4sk} first with respect to $p$, and then with respect to $q$ and eliminating $H_{q-p-1}$ between the two resulting equations.
\end{proof}
\begin{lemma}\label{xv2hi0j}
If $k$ is a non-negative integer and $r$ is a complex number that is not a negative integer, then
\begin{equation}
\binom{{k - 1/2}}{k+r} = ( - 1)^r r\binom{{2r}}{r}\frac{{C_k }}{{2^{2k + 2r} }}\binom{{k + r}}{{k + 1}}^{ - 1},
\end{equation}
where $C_j$ is the $j^{th}$ Catalan number, defined for every non-negative integer $j$ by
\begin{equation}
C_j=\frac1{j+1}\binom{2j}j.
\end{equation}
\end{lemma}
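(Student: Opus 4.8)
The plan is to derive the identity by factoring the generalized binomial coefficient $\binom{k-1/2}{k+r}$ as a product of three quantities, each of which is either elementary or already evaluated earlier in the paper. The first step is to apply the standard ``subset of a subset'' identity $\binom{x}{m+n}=\binom{x}{m}\binom{x-m}{n}\binom{m+n}{n}^{-1}$ (valid wherever the terms are defined, as one checks immediately by expanding in Gamma functions) with $x=k-\tfrac12$, $m=k$ and $n=r$. This gives
\[
\binom{k-1/2}{k+r}=\binom{k-1/2}{k}\,\binom{-1/2}{r}\,\binom{k+r}{r}^{-1}.
\]

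Next I would substitute the two closed forms $\binom{k-1/2}{k}=4^{-k}\binom{2k}{k}$, which is immediate from $\binom{k-1/2}{k}=\frac{(2k-1)!!}{2^k\,k!}=\frac{(2k)!}{4^k\,(k!)^2}$ for a non-negative integer $k$, and $\binom{-1/2}{r}=(-1)^r 2^{-2r}\binom{2r}{r}$, which is exactly the evaluation already recorded above. It then remains to turn the remaining factors into those appearing on the right-hand side: since $C_k=\frac{1}{k+1}\binom{2k}{k}$ we have $4^{-k}\binom{2k}{k}=(k+1)\,4^{-k}C_k$, and a one-line computation with factorials (or Gamma functions) shows $\binom{k+r}{r}^{-1}=\frac{r}{k+1}\binom{k+r}{k+1}^{-1}$, both sides being equal to $\frac{\Gamma(k+1)\Gamma(r+1)}{\Gamma(k+r+1)}$. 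Multiplying the three factors, the $(k+1)$'s cancel and $4^{-k}4^{-r}=2^{-2k-2r}$, which yields precisely
\[
\binom{k-1/2}{k+r}=(-1)^r\,r\binom{2r}{r}\frac{C_k}{2^{2k+2r}}\binom{k+r}{k+1}^{-1}.
\]

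I do not expect a genuinely hard step here; the whole proof is bookkeeping once the three-factor split is chosen, and the ``obstacle'' is really just spotting that this particular decomposition makes every piece a known quantity. The only point deserving a word of care is the meaning of the ingredients for complex (non-integer) $r$: the splitting identity, the evaluation of $\binom{-1/2}{r}$ and the relation between $\binom{k+r}{r}^{-1}$ and $\binom{k+r}{k+1}^{-1}$ are all to be interpreted through the Gamma function, with Legendre's duplication formula $\Gamma(2z)=\frac{2^{2z-1}}{\sqrt{\pi}}\,\Gamma(z)\Gamma(z+\tfrac12)$ supplying the central binomial coefficient $\binom{2r}{r}=\Gamma(2r+1)/\Gamma(r+1)^2$; with these conventions the computation above goes through verbatim.
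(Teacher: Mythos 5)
Your proof is correct and is essentially the paper's argument: the paper just says the identity follows by expressing the generalized binomial coefficients through the Gamma function, and your three-factor split (subset-of-a-subset identity, $\binom{k-1/2}{k}=4^{-k}\binom{2k}{k}$, $\binom{-1/2}{r}=(-1)^r2^{-2r}\binom{2r}{r}$, plus Legendre duplication) is exactly that Gamma-function bookkeeping carried out explicitly. The only caveat, which you inherit from the paper's statement rather than introduce, is that for non-integer complex $r$ the factor $(-1)^r$ (equivalently the evaluation of $\binom{-1/2}{r}$) must be read with the same formal convention the paper uses, since via the reflection formula the literal identity requires $(-1)^r=\cos(\pi r)$, i.e., it is an integer-$r$ identity extended formally.
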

\begin{proof}
A consequence of the generalized binomial coefficient, expressed in terms of the Gamma function, $\Gamma(z)$, namely,
\begin{equation}
\binom rs= \frac{{\Gamma (r + 1)}}{{\Gamma (s + 1)\Gamma (r - s + 1)}}.
\end{equation}
\end{proof}
\begin{proposition}
If $n$ is a non-negative integer and $p$ and $q$ are complex numbers that are not negative integers, then
\begin{equation}\label{e4d81bh}
\sum_{k = 1}^n {\sum_{j = 0}^{k - 1} {\frac{1}{{n - j}}\binom{{n - j + q}}{p}} }  = \frac{{n + q + 1}}{{p + 1}}\binom{{n + q}}{p} - \binom{{q}}{{p + 1}} - \binom{{q}}{p}
\end{equation}
and
\begin{align}\label{qukev6y}
\sum_{k = 1}^n {\sum_{j = 0}^{k - 1} {\frac{{\binom{{n - j + q}}{p}}}{{k - j + q}}} }  &= \binom{{n + q}}{p}\left( {\frac{1}{{p + 1}} + \frac{{n + q + 1}}{{p + 1}}\left( {H_{n + q}  - H_q  - \frac{1}{{p + 1}}} \right)} \right)\nonumber\\
&\qquad + \binom{{q}}{{p + 1}}\frac{1}{{p + 1}}.
\end{align}
\end{proposition}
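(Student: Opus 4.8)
The plan is to feed the single sequence $a_k=\binom{k+q}{p}$, $k=1,2,3,\ldots$, into the two general identities already at hand — Theorem~\ref{grat7c2} for~\eqref{e4d81bh} and equation~\eqref{n1e84dx} for~\eqref{qukev6y} — and to evaluate the resulting one-index sums by the Lemma containing~\eqref{wcjr4sk} and~\eqref{q1iouiz}. Since $q$ is not a negative integer, $\binom{k+q}{p}$ and $H_{k+q}$ are defined for every positive integer $k$, and $p\neq-1$ ensures $p+1\neq0$, so no extra hypotheses are needed.

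For~\eqref{e4d81bh} I would substitute $a_k=\binom{k+q}{p}$ into~\eqref{sk16ujw}: the left-hand side turns into $\sum_{k=1}^n\sum_{j=0}^{k-1}\frac{1}{n-j}\binom{n-j+q}{p}$, exactly the claimed double sum, while the right-hand side $\sum_{k=1}^n a_k=\sum_{k=1}^n\binom{k+q}{p}$ is evaluated immediately by~\eqref{wcjr4sk} to $\frac{n+q+1}{p+1}\binom{n+q}{p}-\binom{q}{p+1}-\binom{q}{p}$. That already proves~\eqref{e4d81bh}.

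For~\eqref{qukev6y} I would take $r=q$ and the same $a_k$ in~\eqref{n1e84dx}. The left-hand side becomes the double sum in~\eqref{qukev6y}, and the right-hand side is $\sum_{k=1}^n\binom{k+q}{p}H_{k+q}-H_q\sum_{k=1}^n\binom{k+q}{p}$; I would substitute~\eqref{q1iouiz} for the first sum and~\eqref{wcjr4sk} for the second, then group the terms according to the three binomial patterns $\binom{n+q}{p}$, $\binom{q}{p+1}$, $\binom{q}{p}$. The $\binom{q}{p}$ terms cancel, the $\binom{q}{p+1}$ terms collapse to $\binom{q}{p+1}/(p+1)$, and the $\binom{n+q}{p}$ terms assemble into $\binom{n+q}{p}\bigl(\frac{1}{p+1}+\frac{n+q+1}{p+1}(H_{n+q}-H_q-\frac{1}{p+1})\bigr)$, which is the stated right-hand side.

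The one spot that requires care is this last consolidation: one must check that the $-H_q\binom{q}{p+1}$ and $-H_q\binom{q}{p}$ pieces produced by~\eqref{q1iouiz} exactly annihilate $-H_q$ times~\eqref{wcjr4sk}, and that the remaining occurrences of $H_q$ combine with $H_{n+q}$ inside the $\binom{n+q}{p}$ bracket to give $H_{n+q}-H_q$ while the $-\frac{1}{(p+1)^2}$ term survives. Beyond this short algebraic check, the proof is pure substitution into previously proved results, so I do not expect any real obstacle.
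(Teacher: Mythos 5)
Your proposal is correct and follows exactly the paper's own proof: substitute $a_k=\binom{k+q}{p}$ into Theorem~\ref{grat7c2} together with~\eqref{wcjr4sk} for~\eqref{e4d81bh}, and into~\eqref{n1e84dx} with $r=q$ together with~\eqref{wcjr4sk} and~\eqref{q1iouiz} for~\eqref{qukev6y}. The cancellation of the $H_q$ terms that you flag does work out as you describe, so nothing is missing.
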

\begin{proof}
Identity~\eqref{e4d81bh} is obtained by putting $a_k=\binom{k+q}p$ in Theorem~\ref{grat7c2} and using~\eqref{wcjr4sk}. Set $a_k=\binom{k+q}p$ in~\eqref{n1e84dx} with $r=q$ and make use of~\eqref{wcjr4sk} and~\eqref{q1iouiz} to get~\eqref{qukev6y}.
\end{proof}

Our next result is a double sum identity that involves binomial coefficients, Catalan numbers, and odd harmonic numbers.
\begin{proposition}
If $n$ is a non-negative integer and $r\ne1/2$ is a complex number that is not a non-positive integer, then
\begin{align}
&\sum_{k = 1}^n {\sum_{j = 0}^{k - 1} {2^{2j} \binom{{n - j + r}}{{r - 1}}^{ - 1} \frac{{C_{n - j} }}{{2k - 2j - 1}}} }\nonumber\\
&\qquad  =  - \binom{{n + r}}{{r - 1}}^{ - 1} \frac{{C_n }}{{2r - 1}}\left( {1 + \left( {2n + 1} \right)\left( {O_n  + \frac{1}{{2r - 1}}} \right)} \right) + \frac{{2^{2n + 1} }}{{\left( {2r - 1} \right)^2 }}.
\end{align}
In particular,
\begin{equation}
\sum_{k = 1}^n {\sum_{j = 0}^{k - 1} {\frac{{2^{2j} C_{n - j} }}{{2k - 2j - 1}}} }  =  - C_n \left( {1 + \left( {2n + 1} \right)\left( {O_n  + 1} \right)} \right) + 2^{2n + 1} .
\end{equation}
\end{proposition}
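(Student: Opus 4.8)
The plan is to feed a suitable sequence into the odd-harmonic machine~\eqref{ml4vyu4} and then evaluate the resulting single sum $\sum_{k=1}^n a_kO_k$ in closed form, the evaluation itself being reduced via Lemma~\ref{xv2hi0j} to a sum of half-integer binomial coefficients weighted by odd harmonic numbers, which the binomial identities~\eqref{wcjr4sk} and~\eqref{q1iouiz} dispatch. First I would write $2^{2j}=2^{2n}2^{-2(n-j)}$, so that the double sum on the left equals $2^{2n}\sum_{k=1}^n\sum_{j=0}^{k-1}(2k-2j-1)^{-1}\tilde a_{n-j}$ with $\tilde a_m=2^{-2m}\binom{m+r}{r-1}^{-1}C_m$. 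Applying~\eqref{ml4vyu4} to $(\tilde a_k)$ then reduces the claim to the single-sum identity
\begin{equation}\label{prop:star}
\sum_{k=1}^n 2^{-2k}\binom{k+r}{r-1}^{-1}C_k\,O_k
= -\,2^{-2n}\binom{n+r}{r-1}^{-1}\frac{C_n}{2r-1}\Bigl(1+(2n+1)\Bigl(O_n+\tfrac1{2r-1}\Bigr)\Bigr)+\frac{2}{(2r-1)^2}.
\end{equation}

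Next, since $\binom{k+r}{r-1}=\binom{k+r}{k+1}$, Lemma~\ref{xv2hi0j} gives $2^{-2k}\binom{k+r}{r-1}^{-1}C_k=c_r\binom{k-1/2}{k+r}=c_r\binom{k-1/2}{-r-1/2}$ for the $k$-independent constant $c_r=(-1)^r2^{2r}\bigl(r\binom{2r}{r}\bigr)^{-1}$, which the Gamma-function form of the generalized binomial coefficient rewrites as $c_r=(-1)^r\Gamma(1/2)\Gamma(r)/\Gamma(r+1/2)$. Thus~\eqref{prop:star} is equivalent to evaluating $\sum_{k=1}^n\binom{k-1/2}{p}O_k$ with $p=-r-1/2$. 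For this I would use the elementary fact $H_{k-1/2}=2O_k+H_{-1/2}$ (with $H_{-1/2}=-2\ln2$) to write
$$\sum_{k=1}^n\binom{k-1/2}{p}O_k=\tfrac12\Bigl(\sum_{k=1}^n\binom{k-1/2}{p}H_{k-1/2}-H_{-1/2}\sum_{k=1}^n\binom{k-1/2}{p}\Bigr),$$
and then invoke~\eqref{q1iouiz} and~\eqref{wcjr4sk}, both specialized to $q=-1/2$. Pleasantly, the terms carrying $H_{-1/2}$ (equivalently $\ln2$) cancel identically, leaving a closed form in $\binom{n-1/2}{p}$, $O_n$, $\binom{-1/2}{p+1}$ and $n,p$.

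To finish, substitute $p+1=\tfrac12-r=-\tfrac12(2r-1)$, convert $\binom{n-1/2}{p}=\binom{n-1/2}{n+r}$ back to $c_r^{-1}2^{-2n}C_n\binom{n+r}{r-1}^{-1}$ by Lemma~\ref{xv2hi0j} once more, and simplify $c_r\binom{-1/2}{p+1}=c_r\binom{-1/2}{1/2-r}$ to $-2/(2r-1)$ using the reflection formula for $\Gamma$; collecting terms produces exactly the right-hand side of~\eqref{prop:star}, and multiplying through by $2^{2n}$ gives the general statement. The particular case then follows by setting $r=1$, where $\binom{n-j+1}{0}^{-1}=1$ and $2r-1=1$.

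The main obstacle is the bookkeeping with half-integer and complex-$r$ binomial coefficients: one must check that the auxiliary constant $c_r$ introduced by Lemma~\ref{xv2hi0j} cancels cleanly when the half-integer binomials $\binom{n-1/2}{n+r}$ and $\binom{-1/2}{1/2-r}$ are converted back to Catalan/central-binomial form, and that the factors of the shape $(-1)^r\cos(\pi r)$ thrown off by the reflection formula reduce to $1$ under the conventions already in force in Lemma~\ref{xv2hi0j}; and one must verify that the $\ln2$ contributions genuinely cancel, which is precisely what allows the clean $O_n$-expression on the right to emerge.
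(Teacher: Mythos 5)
Your argument is correct and is essentially the paper's own proof: the paper simply sets $q=-1/2$ and $p=-r-1/2$ in \eqref{qukev6y} and converts the half-integer binomials via Lemma~\ref{xv2hi0j}, and since \eqref{qukev6y} was itself obtained from \eqref{n1e84dx} together with \eqref{wcjr4sk} and \eqref{q1iouiz}, your route through \eqref{ml4vyu4} (the $r=-1/2$ case of \eqref{n1e84dx}) merely re-derives that specialization, with the same cancellation of the $H_{-1/2}$ (i.e., $\ln 2$) terms already built into the combination of \eqref{wcjr4sk} and \eqref{q1iouiz}. The bookkeeping you flag does work out cleanly: writing your constant in Gamma form as $c_r=\Gamma(r)\Gamma(1/2-r)/\sqrt{\pi}$ gives $c_r\binom{-1/2}{1/2-r}=\Gamma(1/2-r)/\Gamma(3/2-r)=-2/(2r-1)$ and $c_r\binom{n-1/2}{n+r}=2^{-2n}C_n\binom{n+r}{r-1}^{-1}$ directly, so no reflection-formula or $(-1)^r$ subtleties arise beyond the conventions already present in Lemma~\ref{xv2hi0j}.
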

\begin{proof}
With Lemma~\ref{xv2hi0j} in mind, set $q=-1/2$ and $p=-r-1/2$ in~\eqref{qukev6y}.
\end{proof}
\begin{lemma}[{\cite[Section 13]{Adegoke0}}]
If $n$ is a non-negative integer and $r$ is a complex number that is not a negative integer, then
\begin{equation}\label{ww5fa8g}
\sum_{k = 0}^n {( - 1)^k \binom{{n}}{k}\binom{{k}}{{k + r}}}  
= \begin{cases}
 \dfrac{{\sin (\pi r)}}{{\pi \left( {n + r} \right)}},&\text{if $r\ne 0$;} \\ 
 0,&\text{if $r= 0$, $n\ne 0$;}  
 \end{cases}
\end{equation}
and
\begin{equation}\label{mdaqu57}
\sum_{k = 0}^n {( - 1)^k \binom{{n}}{k}\binom{{k}}{{k + r}}H_{k + r} }  
= \begin{cases}
 \dfrac{{\sin (\pi r)}}{{\pi \left( {n + r} \right)}}\left( {H_{r - 1}  + \dfrac{1}{{n + r}}} \right),&\text{if $r\ne 0$;} \\ 
 -\dfrac1n,&\text{if $r=0$, $n\ne 0$.}  
 \end{cases}
\end{equation}
\end{lemma}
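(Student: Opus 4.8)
The plan is to reduce both identities to the functional equations of the Gamma and digamma functions. I would start from the Gamma--function form of the generalized binomial coefficient,
\begin{equation}
\binom{k}{k+r} = \frac{\Gamma(k+1)}{\Gamma(k+r+1)\,\Gamma(1-r)},
\end{equation}
which is an entire function of $r$. To prove \eqref{ww5fa8g}, assume first that $\Re(r)>0$, insert the Beta integral $\Gamma(k+1)\Gamma(r)/\Gamma(k+r+1)=\int_0^1 t^k(1-t)^{r-1}\,dt$, interchange the (finite) sum with the integral, and collapse $\sum_{k=0}^n(-1)^k\binom nk t^k=(1-t)^n$. What remains is $\frac{1}{\Gamma(r)\Gamma(1-r)}\int_0^1(1-t)^{n+r-1}\,dt=\frac{1}{\Gamma(r)\Gamma(1-r)(n+r)}$, and Euler's reflection formula $\Gamma(r)\Gamma(1-r)=\pi/\sin(\pi r)$ gives exactly the claimed value. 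Since both sides are entire in $r$, analytic continuation extends the identity to all admissible $r$; the line $r=0$, $n\ne 0$ is just $\sum_{k=0}^n(-1)^k\binom nk=0$.

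For \eqref{mdaqu57} I would differentiate \eqref{ww5fa8g} in $r$, which is legitimate term by term because the sum is finite. From the logarithmic derivative of the displayed Gamma form, $\frac{d}{dr}\binom{k}{k+r}=\binom{k}{k+r}\bigl(\psi(1-r)-\psi(k+r+1)\bigr)$, together with $H_{k+r}=\psi(k+r+1)+\gamma$, one gets
\begin{equation}
\binom{k}{k+r}H_{k+r}=-\frac{d}{dr}\binom{k}{k+r}+\bigl(\psi(1-r)+\gamma\bigr)\binom{k}{k+r}.
\end{equation}
Summing against $(-1)^k\binom nk$ and using \eqref{ww5fa8g} turns the left side of \eqref{mdaqu57} (for $r\ne0$) into
\begin{equation}
-\frac{d}{dr}\!\left(\frac{\sin(\pi r)}{\pi(n+r)}\right)+\bigl(\psi(1-r)+\gamma\bigr)\frac{\sin(\pi r)}{\pi(n+r)}.
\end{equation}
The derivative is elementary; after collecting terms, the two $\sin(\pi r)/\bigl(\pi(n+r)^2\bigr)$ contributions cancel, and what is left reduces to the single identity $\psi(1-r)-\psi(r)=\pi\cot(\pi r)$ (the digamma reflection formula) combined with $H_{r-1}=\psi(r)+\gamma$. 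The case $r=0$, $n\ne0$ then follows by letting $r\to0$ (the product $\frac{\sin(\pi r)}{\pi(n+r)}H_{r-1}$ tends to $-1/n$ because $H_{r-1}\sim-1/r$), or directly from the classical evaluation $\sum_{k=1}^n(-1)^k\binom nk H_k=-1/n$.

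I expect the only real work to be bookkeeping rather than anything deep: one must carry out the manipulations with $\Gamma$, $\psi$ and the Beta integral where these objects are regular (in particular for $\Re(r)>0$) and then invoke continuity or analytic continuation to reach the remaining admissible $r$. The points that need a moment of care are the removable singularity of $\sin(\pi r)/\bigl(\pi(n+r)\bigr)$ at $r=-n$, the cancellation of the apparent pole of $H_{r-1}$ at negative integers, and the fact that at a positive integer $r$ every binomial coefficient $\binom{k}{k+r}$ with $0\le k\le n$ vanishes, so that both sides of each identity are trivially $0$.
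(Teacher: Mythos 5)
Your proposal is correct, and it is worth noting that the paper does not actually prove this lemma at all: it imports both identities by citation from \cite[Section 13]{Adegoke0}, so your argument is a genuinely self-contained derivation rather than a variant of anything in the text. The route you take is sound: writing $\binom{k}{k+r}=\Gamma(k+1)/\bigl(\Gamma(k+r+1)\Gamma(1-r)\bigr)$, inserting the Beta integral for $\Re(r)>0$, collapsing $\sum_{k=0}^n(-1)^k\binom nk t^k=(1-t)^n$ and applying Euler's reflection formula gives \eqref{ww5fa8g}, and analytic continuation is legitimate because each summand is entire in $r$ while the right-hand side has only a removable singularity at $r=-n$. For \eqref{mdaqu57}, your bookkeeping checks out: differentiating \eqref{ww5fa8g} yields $\cos(\pi r)/(n+r)-\sin(\pi r)/\bigl(\pi(n+r)^2\bigr)$, the $(n+r)^{-2}$ terms match the $1/(n+r)$ inside the parentheses of the claimed right-hand side, and the remainder is exactly the digamma reflection formula $\psi(1-r)-\psi(r)=\pi\cot(\pi r)$ together with $H_{r-1}=\psi(r)+\gamma$. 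You also correctly isolate the delicate points: the decomposition $\binom{k}{k+r}H_{k+r}=-\frac{d}{dr}\binom{k}{k+r}+\bigl(\psi(1-r)+\gamma\bigr)\binom{k}{k+r}$ degenerates at positive integer $r$, but there every $\binom{k}{k+r}$ vanishes and both sides of both identities are trivially $0$; and the $r\to0$ limit $\frac{\sin(\pi r)}{\pi(n+r)}H_{r-1}\to-\frac1n$ recovers the classical $\sum_{k=1}^n(-1)^k\binom nk H_k=-\frac1n$, matching the stated $r=0$, $n\ne0$ case.
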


\begin{proposition}
If $n$ is a non-negative integer and $r$ is a complex number that is not a negative integer, then
\begin{align}\label{sin_id}
&\sum_{k = 1}^n \sum_{j = 0}^{k - 1} \frac{(- 1)^j}{k - j + r} \binom{n + r}{j} \nonumber \\  
&\qquad= (-1)^n \binom{n}{n+r}^{-1}
\begin{cases}
 \dfrac{{\sin (\pi r)}}{{\pi \left( {n + r} \right)}}\left( {\dfrac{1}{n + r}-\dfrac1r} \right),&\text{if $r\ne 0$;} \\ 
 -\dfrac1n,&\text{if $r=0$, $n\ne 0$.}  
 \end{cases}
\end{align}
\end{proposition}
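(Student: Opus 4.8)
The plan is to feed the master identity~\eqref{n1e84dx} a sequence $(a_k)$ tuned so that its left‑hand double sum is exactly the target, and then to read off the resulting single sums from~\eqref{ww5fa8g} and~\eqref{mdaqu57}. Matching the summand $\frac{(-1)^j}{k-j+r}\binom{n+r}{j}$ against the left side of~\eqref{n1e84dx} forces $a_{n-j}=(-1)^j\binom{n+r}{j}$, i.e.\ $a_k=(-1)^{n-k}\binom{n+r}{n-k}$. To put this in the precise shape demanded by the lemma I would invoke the Gamma‑function (``trinomial revision'') identity
$$\binom{n}{n+r}\binom{n+r}{j}=\binom{n}{j}\binom{n-j}{\,n-j+r\,},$$
valid whenever $r$ is not a positive integer, since both sides equal $n!/\bigl(j!\,\Gamma(1-r)\,\Gamma(n-j+r+1)\bigr)$. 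Taking $j=n-k$ and using $(-1)^{n-k}=(-1)^n(-1)^k$ and $\binom{n}{n-k}=\binom{n}{k}$, this exhibits the same sequence as $a_k=(-1)^n\binom{n}{n+r}^{-1}(-1)^k\binom{n}{k}\binom{k}{k+r}$.

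With this choice~\eqref{n1e84dx} becomes
$$\sum_{k=1}^n\sum_{j=0}^{k-1}\frac{(-1)^j\binom{n+r}{j}}{k-j+r}=(-1)^n\binom{n}{n+r}^{-1}\!\left(\sum_{k=1}^n(-1)^k\binom{n}{k}\binom{k}{k+r}H_{k+r}-H_r\sum_{k=1}^n(-1)^k\binom{n}{k}\binom{k}{k+r}\right).$$
I would then extend each inner sum down to $k=0$; the two extra terms introduced are $\binom{0}{r}H_r$ and $H_r\binom{0}{r}$, which cancel in the combination, so~\eqref{ww5fa8g} and~\eqref{mdaqu57} can be applied verbatim with nothing left over. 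For $r\ne0$ the parenthesis collapses to $\frac{\sin(\pi r)}{\pi(n+r)}\bigl(H_{r-1}+\frac{1}{n+r}-H_r\bigr)$, and $H_r-H_{r-1}=\psi(r+1)-\psi(r)=\frac1r$ turns this into $\frac{\sin(\pi r)}{\pi(n+r)}\bigl(\frac{1}{n+r}-\frac1r\bigr)$; for $r=0$ and $n\ne0$ the parenthesis is $-\frac1n$ and $\binom{n}{n}^{-1}=1$, giving $-(-1)^n/n$. These are exactly the two cases in~\eqref{sin_id}.

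The step I expect to need the most care is the degenerate value $r\in\{1,2,3,\dots\}$: there $\binom{n}{n+r}=0$, so the rewritten sequence $(a_k)$ is not literally defined and the lemma cannot be applied as stated. I would handle this by a rationality argument. The left side of~\eqref{sin_id} is a rational function of $r$ whose only possible poles lie in $\{-1,\dots,-n\}$, while, using $\Gamma(1-r)\sin(\pi r)=\pi/\Gamma(r)$, the right side simplifies to $\dfrac{(-1)^{n+1}n}{n!\,(n+r)}\,\displaystyle\prod_{k=1}^{n-1}(r+k)$, again rational with a single pole at $r=-n$. Since the two rational functions agree on the nonempty open set where the substitution above is legitimate, they agree everywhere both are defined, in particular at the positive integers. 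The remaining items — the sign and index bookkeeping in the reindexing, and the cancellation of the $k=0$ terms — are routine.
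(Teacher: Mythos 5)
Your proposal is correct and takes essentially the same route as the paper: it amounts to choosing $a_k=(-1)^k\binom{n}{k}\binom{k}{k+r}$ in \eqref{n1e84dx} and applying \eqref{ww5fa8g} and \eqref{mdaqu57}, with the trinomial-revision identity $\binom{n}{n+r}\binom{n+r}{j}=\binom{n}{j}\binom{n-j}{n-j+r}$ and the cancellation of the $k=0$ terms (which the paper leaves implicit) made explicit. Your additional rational-continuation argument for positive integer $r$, where $\binom{n}{n+r}=0$ and the stated right-hand side must be read via the Gamma-function interpretation, is a sound supplement that the paper's terse proof does not address.
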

\begin{proof}
Set $a_k=( - 1)^k \binom{{n}}{k}\binom{{k}}{{k + r}}$ in~\eqref{n1e84dx} and use~\eqref{ww5fa8g} and~\eqref{mdaqu57}.
\end{proof}

\begin{corollary}
If $n$ is a non-negative integer, then
\begin{equation}
\sum_{k = 1}^n \sum_{j = 0}^{k - 1} \frac{(- 1)^j}{2(k - j) + 1} \frac{\binom{2n+1}{2j} \binom{2j}{j}}{\binom{n}{j}} 2^{-2j}
= (-1)^{n+1} \frac{n(n+1)}{(2n+1)^2} 2^{-2n} \binom{2(n+1)}{n+1}.
\end{equation}
\end{corollary}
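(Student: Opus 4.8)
The plan is to read off the corollary as the $r=\tfrac12$ instance of the preceding proposition, namely identity~\eqref{sin_id}, after converting the generalized binomial coefficients into ordinary ones.

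First I would rewrite the left-hand side. Putting $r=\tfrac12$ in~\eqref{sin_id} turns the denominator $k-j+r$ into $\tfrac12\bigl(2(k-j)+1\bigr)$, so $\tfrac1{k-j+r}=\tfrac2{2(k-j)+1}$; consequently the left-hand side of~\eqref{sin_id} equals \emph{twice} the double sum in the corollary, provided the summand $\binom{n+1/2}{j}$ can be identified with $2^{-2j}\binom{2n+1}{2j}\binom{2j}{j}\big/\binom nj$. That identification is the first computation I would carry out: writing $\binom rs=\Gamma(r+1)\big/\bigl(\Gamma(s+1)\Gamma(r-s+1)\bigr)$ and using $\Gamma\bigl(m+\tfrac12\bigr)=\dfrac{(2m)!}{4^m\,m!}\sqrt\pi$ for $m\in\{n,\,n-j\}$ gives
\begin{equation}
\binom{n+1/2}{j}=\frac{(2n+1)!\,(n-j)!}{2^{2j}\,j!\,n!\,(2n+1-2j)!}=2^{-2j}\,\frac{\binom{2n+1}{2j}\binom{2j}{j}}{\binom nj},
\end{equation}
which is exactly the summand appearing in the corollary.

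Next I would evaluate the right-hand side of~\eqref{sin_id} at $r=\tfrac12$. Here $\sin(\pi/2)=1$ and $n+\tfrac12=\tfrac12(2n+1)$, and the same Gamma evaluation yields $\binom{n}{n+1/2}^{-1}=\Gamma\bigl(n+\tfrac32\bigr)\Gamma\bigl(\tfrac12\bigr)\big/n!=\dfrac{\pi\,(2n+1)!}{2\cdot4^n\,(n!)^2}$. Plugging these in, together with $\dfrac1{n+1/2}-\dfrac1{1/2}=-\dfrac{4n}{2n+1}$ and $\dfrac{(2n+1)!}{(2n+1)^2}=\dfrac{(2n)!}{2n+1}$, collapses the right-hand side of~\eqref{sin_id} to $(-1)^{n+1}\dfrac{4n}{(2n+1)4^n}\binom{2n}{n}$. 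Dividing by the factor $2$ noted above, the double sum of the corollary equals $(-1)^{n+1}\dfrac{2n}{(2n+1)4^n}\binom{2n}{n}$.

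It then remains to match this with the advertised closed form. From the Pascal-type relation $\binom{2n+2}{n+1}=\dfrac{(2n+2)(2n+1)}{(n+1)^2}\binom{2n}{n}=\dfrac{2(2n+1)}{n+1}\binom{2n}{n}$ one obtains $\dfrac{n(n+1)}{(2n+1)^2}\,2^{-2n}\binom{2n+2}{n+1}=\dfrac{2n}{(2n+1)4^n}\binom{2n}{n}$, so the two expressions coincide and the corollary follows. I expect the only real point of care to be the consistent bookkeeping of the powers of $2$ and of the $2n+1$ factors: everything rests on the single evaluation $\Gamma\bigl(m+\tfrac12\bigr)=\frac{(2m)!}{4^m m!}\sqrt\pi$, and once that is applied uniformly the remaining manipulations are elementary identities among integer binomial coefficients.
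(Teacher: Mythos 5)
Your proposal is correct and follows essentially the same route as the paper: specialize \eqref{sin_id} at $r=\tfrac12$ and simplify using the Gamma-function evaluations of $\binom{n+1/2}{j}$ and $\binom{n}{n+1/2}$, which are precisely the two conversion identities the paper invokes. The only cosmetic difference is that you derive those conversions explicitly and pass through the intermediate form $(-1)^{n+1}\tfrac{2n}{(2n+1)4^n}\binom{2n}{n}$ before matching the stated closed form, and your bookkeeping of the factor $2$ from $\tfrac1{k-j+1/2}=\tfrac2{2(k-j)+1}$ is handled correctly.
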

\begin{proof}
Set $r=1/2$ in \eqref{sin_id} and simplify using
\begin{equation}
\binom{n+1/2}{j} = \frac{\binom{2n+1}{2j} \binom{2j}{j}}{\binom{n}{j}} 2^{-2j} \quad\text{and}\quad \binom{n}{n+1/2} = \frac{1}{\pi}\frac{2^{2n+2}}{n+1} \binom{2(n+1)}{n+1}^{-1}.
\end{equation}
\end{proof}

\section{Applications to binomial transforms}

Two sequences of complex numbers $(s_n)$ and $(\sigma_n)$ are called a binomial transform pair if these sequences are connected by the relations
\begin{equation}
\sigma_n = \sum_{k = 0}^n \binom{n}{k} (-1)^k s_k \qquad\iff\qquad s_n = \sum_{k = 0}^n \binom{n}{k} (-1)^k \sigma_k.
\end{equation}

\begin{theorem}\label{bintrans}
Let $(a_k)$ be a sequence of complex numbers. Let $(s_k)$ and $(\sigma_k)$ be a binomial transform pair. Then
\begin{equation}
\sum_{k = 1}^n (- 1)^k \sum_{j = 0}^{k - 1} (- 1)^j \binom{k}{j} a_{n - j} s_{k - j} = \sum_{k = 1}^n a_k \sigma_k - s_0 \sum_{k = 1}^n a_k.
\end{equation}
\end{theorem}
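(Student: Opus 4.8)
The plan is to collapse the left-hand double sum, by a single change of summation variables, into $\sum_{k=1}^n a_k(\sigma_k-s_0)$, and then to read off the result from the defining relation of a binomial transform pair: $\sigma_m=\sum_{\ell=0}^m\binom m\ell(-1)^\ell s_\ell$, equivalently $\sum_{\ell=1}^m\binom m\ell(-1)^\ell s_\ell=\sigma_m-s_0$.

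First I would combine the two sign factors. Since $(-1)^k(-1)^j=(-1)^{k+j}=(-1)^{k-j}$, the summand is, up to that sign, a convolution of the type appearing in~\eqref{main}, with the binomial coefficient a function of the pair of subscripts $(n-j,\,k-j)$ carried by $a$ and $s$. Then I would introduce the new indices $m=n-j$, $\ell=k-j$. As $(k,j)$ runs over $1\le k\le n$, $0\le j\le k-1$, the pair $(m,\ell)$ runs bijectively over $1\le\ell\le m\le n$: the inverse substitution is $j=n-m$, $k=\ell+n-m$, and the four inequalities $k\ge1$, $k\le n$, $j\ge0$, $j\le k-1$ translate exactly into $\ell\ge1$, $\ell\le m$, $m\le n$. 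Under this substitution the binomial becomes $\binom m\ell$, one has $a_{n-j}=a_m$ and $s_{k-j}=s_\ell$, and the sign collapses to $(-1)^\ell$ because $k+j=\ell+2(n-m)$. Hence the left-hand side equals
\begin{align*}
\sum_{m=1}^n\sum_{\ell=1}^m(-1)^\ell\binom m\ell a_m s_\ell
&=\sum_{m=1}^n a_m\sum_{\ell=1}^m(-1)^\ell\binom m\ell s_\ell \\
&=\sum_{m=1}^n a_m(\sigma_m-s_0)=\sum_{k=1}^n a_k\sigma_k-s_0\sum_{k=1}^n a_k,
\end{align*}
which is the claim.

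So the argument is essentially bookkeeping; the only points that need genuine verification are that $(k,j)\mapsto(n-j,\,k-j)$ is a bijection between the two triangular index sets and that the combined sign reduces to $(-1)^\ell$. The step I would be most careful about — the real hinge — is that this reduction works precisely because the binomial coefficient depends on the pair $(n-j,\,k-j)$ in a way that matches the subscripts of $a_{n-j}$ and $s_{k-j}$; were it to depend on $k$ and $j$ in an unmatched way, the inner sum would fail to telescope to $\sigma_m-s_0$ and the statement would have to be reexamined, so verifying the exact form of that coefficient is the first thing I would do.

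An alternative route is to apply~\eqref{main} directly with $b_j=\sigma_j-\sigma_{j-1}$, since then $\sum_{j=1}^k b_j=\sigma_k-\sigma_0=\sigma_k-s_0$ reproduces the right-hand side at once; but one would still have to match the resulting left-hand side $\sum_{k=1}^n\sum_{j=0}^{k-1}a_{n-j}(\sigma_{k-j}-\sigma_{k-j-1})$ against the binomial-weighted sum, so the reindexing above looks the most economical.
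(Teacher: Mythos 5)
There is a genuine gap, and it sits exactly at the step you flagged as the hinge. Under your substitution $m=n-j$, $\ell=k-j$ (so $j=n-m$, $k=\ell+n-m$), the coefficient $\binom{k}{j}$ does \emph{not} become $\binom{m}{\ell}$; it becomes $\binom{\ell+n-m}{n-m}$. Hence the inner sum is $\sum_{\ell=1}^{m}(-1)^{\ell}\binom{\ell+n-m}{n-m}s_{\ell}$, which does not telescope to $\sigma_m-s_0$, and the argument breaks. In fact the identity as printed is false: take $n=2$ and the binomial transform pair $s_k\equiv 1$, $\sigma_0=1$, $\sigma_k=0$ for $k\ge 1$; the left-hand side equals $-2a_1$ while the right-hand side equals $-(a_1+a_2)$. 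What your change of variables does prove is the corrected statement in which $\binom{k}{j}$ is replaced by $\binom{n-j}{k-j}$, since then the coefficient genuinely transforms into $\binom{m}{\ell}$ and the inner sum reduces to $\sigma_m-s_0$ exactly as you wrote; the sign bookkeeping $(-1)^k(-1)^j=(-1)^{\ell}$ is fine.

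For comparison, the paper's own proof commits the same sleight of hand: it applies \eqref{main} with $b_j=\binom{k}{j}(-1)^{j}s_j$, but this ``sequence'' depends on the outer summation index $k$, which \eqref{main} does not permit --- in \eqref{main} the outer index of the right-hand side corresponds to $n-j$, not to $k$, on the left-hand side. Carried out legitimately, that substitution produces $\binom{n-j}{k-j}$ on the left, i.e., the corrected identity above (and the same correction is needed in the Fibonacci, Lucas, and Bernoulli propositions deduced from this theorem). So your instinct that the whole matter hinges on whether the binomial coefficient depends on the pair $(n-j,\,k-j)$ in a matched way was exactly right; the proposal fails only because that verification was asserted rather than performed, and performing it shows the stated theorem itself must be amended.
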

\begin{proof}
Work with \eqref{main} and apply it to $b_j=\binom{k}{j}(-1)^j s_j$. Note that
$$\sum_{j=1}^k b_j = \sum_{j=1}^k \binom{k}{j} (-1)^j s_j = \sum_{j=0}^k \binom{k}{j} (-1)^j s_j - s_0 = \sigma_k - s_0.$$
\end{proof}

\begin{proposition}
Let $(a_k)$ be a sequence of complex numbers. Let $F_n$ be the Fibonacci sequence and $L_n$ be the Lucas sequence, respectively. Then
\begin{equation}
\sum_{k = 1}^n \sum_{j = 0}^{k - 1} \binom{k}{j} a_{n - j} F_{k - j} = \sum_{k = 1}^n a_k F_{2k} 
\end{equation}
and
\begin{equation}
\sum_{k = 1}^n \sum_{j = 0}^{k - 1} \binom{k}{j} a_{n - j} L_{k - j} = \sum_{k = 1}^n a_k (L_{2k} - 2).
\end{equation}
\end{proposition}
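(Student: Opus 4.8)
The plan is to read off both identities from Theorem~\ref{bintrans} by plugging in the right binomial transform pair. For the Fibonacci case I would take $s_k=(-1)^kF_k$. Since the binomial transform is an involution up to the alternating sign, its partner is forced to be
\[
\sigma_n=\sum_{k=0}^n\binom nk(-1)^ks_k=\sum_{k=0}^n\binom nkF_k .
\]
The key auxiliary fact is the classical evaluation $\sum_{k=0}^n\binom nkF_k=F_{2n}$, which drops out of the Binet form $F_k=(\alpha^k-\beta^k)/\sqrt5$ together with $1+\alpha=\alpha^2$ and $1+\beta=\beta^2$: indeed $\sum_k\binom nkF_k=\bigl((1+\alpha)^n-(1+\beta)^n\bigr)/\sqrt5=(\alpha^{2n}-\beta^{2n})/\sqrt5=F_{2n}$. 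Hence $\bigl((-1)^kF_k\bigr)$ and $\bigl(F_{2k}\bigr)$ form a binomial transform pair, and $s_0=F_0=0$.

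The second step is pure sign bookkeeping. Substituting $s_{k-j}=(-1)^{k-j}F_{k-j}$ into the left-hand side of Theorem~\ref{bintrans}, the three sign factors $(-1)^k$, $(-1)^j$ and $(-1)^{k-j}$ combine to $(-1)^{2k}=1$, so the left-hand side collapses to exactly $\sum_{k=1}^n\sum_{j=0}^{k-1}\binom kj a_{n-j}F_{k-j}$. On the right-hand side the term $s_0\sum_{k=1}^na_k$ vanishes because $s_0=0$, leaving $\sum_{k=1}^na_k F_{2k}$, which is the first assertion.

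For the Lucas identity I would run the identical argument with $s_k=(-1)^kL_k$, whose binomial partner is $\sigma_n=\sum_{k=0}^n\binom nkL_k=L_{2n}$ by the same Binet computation with $L_k=\alpha^k+\beta^k$ (now using $\alpha^{2n}+\beta^{2n}=L_{2n}$). The only change is that $s_0=L_0=2\ne0$, so Theorem~\ref{bintrans} now yields $\sum_{k=1}^na_k L_{2k}-2\sum_{k=1}^na_k=\sum_{k=1}^na_k(L_{2k}-2)$, which is the second assertion.

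There is no real obstacle: the entire content lies in choosing the pair and tracking signs. The only point requiring care is the value of $s_0$ — getting it wrong would corrupt the constant term in the Lucas version — but $s_0=F_0=0$ and $s_0=L_0=2$ are immediate once the pair is written down explicitly, so this is more a matter of bookkeeping than of difficulty.
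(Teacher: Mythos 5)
Your proposal is correct and follows exactly the paper's route: apply Theorem~\ref{bintrans} to the binomial transform pairs $\{s_n,\sigma_n\}=\{(-1)^nF_n,F_{2n}\}$ and $\{(-1)^nL_n,L_{2n}\}$, with the sign cancellation $(-1)^k(-1)^j(-1)^{k-j}=1$ and the values $s_0=0$, $s_0=2$ handling the constant term. You merely supply the Binet-form verification of the pairs, which the paper takes as known.
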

\begin{proof}
Apply Theorem \ref{bintrans} in turn to the binomial transform pairs $\{s_n,\sigma_n\}=\{(-1)^n F_n, F_{2n}\}$ 
and $\{s_n,\sigma_n\}=\{(-1)^n L_n, L_{2n}\}$.
\end{proof}

\begin{proposition}
Let $(a_k)$ be a sequence of complex numbers. Let $B_n$ be the sequence Bernoulli numbers. Then
\begin{equation}
\sum_{k = 1}^n \sum_{j = 0}^{k - 1} \binom{k}{j} a_{n - j} B_{k - j} = \sum_{k = 1}^n a_k \left ( (-1)^k B_{k} - 1 \right ). 
\end{equation}
\end{proposition}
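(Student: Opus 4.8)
The plan is to deduce the identity directly from Theorem~\ref{bintrans} by choosing an appropriate binomial transform pair, exactly in the spirit of the two preceding propositions. The left-hand side of Theorem~\ref{bintrans} carries the extra factor $(-1)^{k}(-1)^{j}$ compared with the double sum in the statement, so to make the signs disappear one is forced to take $s_{m}=(-1)^{m}B_{m}$ rather than $s_{m}=B_{m}$; then $(-1)^{k}(-1)^{j}s_{k-j}=(-1)^{k}(-1)^{j}(-1)^{k-j}B_{k-j}=B_{k-j}$, since $(-1)^{2k}=1$.

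First I would compute the companion sequence $\sigma_{n}=\sum_{k=0}^{n}\binom{n}{k}(-1)^{k}s_{k}=\sum_{k=0}^{n}\binom{n}{k}B_{k}$. Multiplying the exponential generating function $\frac{x}{e^{x}-1}=\sum_{k\ge0}B_{k}\frac{x^{k}}{k!}$ by $e^{x}$ gives, on one hand, $\frac{xe^{x}}{e^{x}-1}=\sum_{n\ge0}\Bigl(\sum_{k=0}^{n}\binom{n}{k}B_{k}\Bigr)\frac{x^{n}}{n!}$, and, on the other, $\frac{xe^{x}}{e^{x}-1}=\frac{-x}{e^{-x}-1}=\sum_{n\ge0}(-1)^{n}B_{n}\frac{x^{n}}{n!}$; comparing coefficients yields the classical identity $\sum_{k=0}^{n}\binom{n}{k}B_{k}=(-1)^{n}B_{n}$. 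Hence $\sigma_{n}=(-1)^{n}B_{n}=s_{n}$, so $\{s_{n},\sigma_{n}\}=\{(-1)^{n}B_{n},(-1)^{n}B_{n}\}$ is a binomial transform pair with itself; the second relation in the definition of a binomial transform pair then holds automatically, the signed binomial transform being an involution.

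Finally I would substitute into Theorem~\ref{bintrans}. Using $s_{0}=B_{0}=1$ and $\sigma_{k}=(-1)^{k}B_{k}$, the right-hand side becomes $\sum_{k=1}^{n}a_{k}(-1)^{k}B_{k}-\sum_{k=1}^{n}a_{k}=\sum_{k=1}^{n}a_{k}\bigl((-1)^{k}B_{k}-1\bigr)$, while the left-hand side collapses, by the sign cancellation noted above, to $\sum_{k=1}^{n}\sum_{j=0}^{k-1}\binom{k}{j}a_{n-j}B_{k-j}$, which is precisely the claimed identity. There is no genuine obstacle here: the only points requiring care are the sign bookkeeping that dictates the choice $s_{m}=(-1)^{m}B_{m}$ and the verification of $\sum_{k=0}^{n}\binom{n}{k}B_{k}=(-1)^{n}B_{n}$ (valid in the convention $B_{1}=-\tfrac12$ used throughout the paper), both of which are routine.
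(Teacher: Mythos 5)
Your proof is correct and follows exactly the paper's route: the paper likewise applies Theorem~\ref{bintrans} to the self-paired binomial transform $\{s_n,\sigma_n\}=\{(-1)^nB_n,(-1)^nB_n\}$, with your sign bookkeeping and the identity $\sum_{k=0}^n\binom{n}{k}B_k=(-1)^nB_n$ supplying the details the paper leaves implicit.
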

\begin{proof}
Apply Theorem \ref{bintrans} to the binomial transform pair $\{s_n,\sigma_n\}=\{(-1)^n B_n, (-1)^n B_n\}$. 
\end{proof}

\begin{theorem}
Let $(s_k)$ and $(\sigma_k)$ be a binomial transform pair. Let $m$, $n$, and $r$ be non-negative integers. Then
\begin{align}
&\sum_{k = 1}^n \sum_{j = 0}^{k - 1} \frac{(- 1)^j}{n - j + 1}\binom{n}{j}\sum_{p = 0}^r (- 1)^p \binom{r}{p} s_{k + p + m - j}\nonumber\\
&\qquad= \frac{(- 1)^n}{n + 1} \sum_{p = 0}^m (- 1)^p \binom{m}{p} \left( \sigma_{n + p + r} - \sigma_{p + r} \right).
\end{align}
In particular,
\begin{align}
\sum_{k = 1}^n {\sum_{j = 0}^{k - 1} {\frac{{( - 1)^j }}{{n - j + 1}}\binom{{n}}{j}\sum_{p = 0}^r {( - 1)^p \binom{{r}}{p}s_{k + p - j} } } }  &= \frac{{( - 1)^n }}{{n + 1}}\left( {\sigma _{n + r}  - \sigma _r } \right),\\
\sum_{k = 1}^n {\sum_{j = 0}^{k - 1} {\frac{{( - 1)^j }}{{n - j + 1}}\binom{{n}}{j}s_{k + m - j} } }  &= \frac{{( - 1)^n }}{{n + 1}}\sum_{p = 0}^m {( - 1)^p \binom{{m}}{p}\left( {\sigma _{n + p}  - \sigma _p } \right)} ,
\end{align}
and
\begin{equation}\label{lzb3fn0}
\sum_{k = 1}^n \sum_{j = 0}^{k - 1} \frac{(- 1)^j}{n - j + 1}\binom{n}{j} s_{k - j} = \frac{(- 1)^n}{n + 1}(\sigma_n - \sigma_0).
\end{equation}
\end{theorem}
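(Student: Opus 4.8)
The plan is to feed the right data into the master identity~\eqref{main}. I would take
\[
a_\ell=\frac{(-1)^\ell}{\ell+1}\binom{n}{\ell},\qquad b_\ell=\sum_{p=0}^r(-1)^p\binom{r}{p}s_{\ell+m+p},
\]
so that $b_{k-j}=\sum_{p=0}^r(-1)^p\binom{r}{p}s_{k+m+p-j}$. Because $\binom{n}{n-j}=\binom nj$ and $(-1)^n(-1)^{n-j}=(-1)^j$, multiplying the left-hand side of~\eqref{main} by $(-1)^n$ reproduces term by term the left-hand side of the theorem. Hence everything reduces to evaluating $(-1)^n\sum_{k=1}^n a_k\sum_{j=1}^k b_j$ and matching it with the claimed right-hand side.

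For the inner partial sums I would write $\frac{1}{k+1}\binom nk=\frac{1}{n+1}\binom{n+1}{k+1}$ and interchange the order of summation in $k$ and $j$, obtaining
\[
(-1)^n\sum_{k=1}^n a_k\sum_{j=1}^k b_j=\frac{(-1)^n}{n+1}\sum_{p=0}^r(-1)^p\binom rp\sum_{j=1}^n s_{j+m+p}\Bigl(\sum_{k=j}^n(-1)^k\binom{n+1}{k+1}\Bigr).
\]
After the shift $i=k+1$ and using $\sum_{i=0}^{n+1}(-1)^i\binom{n+1}{i}=0$, the inner bracket collapses to the standard partial alternating binomial sum $\sum_{i=0}^j(-1)^i\binom{n+1}{i}=(-1)^j\binom nj$. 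Restoring the $j=0$ term then turns the remaining $j$-sum into $\sum_{j=0}^n(-1)^j\binom nj s_{j+m+p}-s_{m+p}$.

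The real work is a \emph{shift formula} for a binomial transform pair $(s_k),(\sigma_k)$: for non-negative integers $N$ and $q$,
\[
\sum_{j=0}^N(-1)^j\binom Nj s_{j+q}=\sum_{i=0}^q(-1)^i\binom qi\sigma_{N+i}.
\]
I would prove it by inserting $s_{j+q}=\sum_\ell(-1)^\ell\binom{j+q}{\ell}\sigma_\ell$, swapping the sums, and evaluating $\sum_{j=0}^N(-1)^j\binom Nj\binom{j+q}{\ell}$ through the elementary identity $\sum_{j=0}^N(-1)^j\binom Nj P(j)=(-1)^N(\Delta^NP)(0)$, valid for any polynomial $P$, together with $\Delta_j\binom{j+q}{\ell}=\binom{j+q}{\ell-1}$; this yields $\sum_{j=0}^N(-1)^j\binom Nj\binom{j+q}{\ell}=(-1)^N\binom{q}{\ell-N}$, and re-indexing $\ell=N+i$ finishes it. Applying the shift formula with $(N,q)=(n,m+p)$, and with $(N,q)=(0,m+p)$ (which is just $s_{m+p}=\sum_i(-1)^i\binom{m+p}{i}\sigma_i$), the $j$-sum above becomes $\sum_{i=0}^{m+p}(-1)^i\binom{m+p}{i}(\sigma_{n+i}-\sigma_i)$.

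At that point the candidate right-hand side reads $\frac{(-1)^n}{n+1}\sum_{p=0}^r(-1)^p\binom rp\sum_{i=0}^{m+p}(-1)^i\binom{m+p}{i}c_i$ with $c_i:=\sigma_{n+i}-\sigma_i$, and the last step is the purely combinatorial collapse
\[
\sum_{p=0}^r(-1)^p\binom rp\sum_{i=0}^{m+p}(-1)^i\binom{m+p}{i}c_i=\sum_{p=0}^m(-1)^p\binom mp c_{p+r},
\]
valid for an arbitrary sequence $(c_i)$. I would derive it either by the same finite-difference device (swap the sums, use $\sum_{p=0}^r(-1)^p\binom rp\binom{m+p}{i}=(-1)^r\binom{m}{i-r}$, re-index $i=p+r$), or conceptually by noting that the inner sum is the binomial transform of $(c_i)$ at $m+p$ and invoking the shift formula once more (the binomial transform being an involution). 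Putting $c_i=\sigma_{n+i}-\sigma_i$ then gives the theorem, and the three displayed particular cases are the specializations $m=0$, then $r=0$, then $m=r=0$. The main obstacle is not conceptual but organizational: correctly carrying the double modification ``shift by $m$, difference by $r$'' through the interchanges of summation, and recognizing that the shift formula (hence the finite-difference identity for alternating binomial sums) is exactly what converts the $s$-partial sums into the $\sigma$-differences.
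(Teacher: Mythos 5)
Your argument is correct, and it enters exactly where the paper does: you feed $a_k=(-1)^k\binom{n}{k}/(k+1)$ (your $(-1)^n$-normalized version of the paper's choice) together with the $r$-fold differenced, $m$-shifted sequence $b_\ell=\sum_{p=0}^r(-1)^p\binom{r}{p}s_{\ell+m+p}$ into \eqref{main}, and then everything hinges on evaluating $\sum_{k=1}^n a_k\sum_{j=1}^k b_j$. The difference is in how that evaluation is done: the paper disposes of it in one line by citing Theorems 6.3 and 7.9 of \cite{Adegoke0}, which package precisely the binomial-transform shift/difference formulas needed, whereas you rebuild that machinery from scratch — the partial alternating sum $\sum_{i=0}^{j}(-1)^i\binom{n+1}{i}=(-1)^j\binom{n}{j}$ after the $\frac{1}{k+1}\binom{n}{k}=\frac{1}{n+1}\binom{n+1}{k+1}$ absorption and interchange of summation, the finite-difference evaluation $\sum_{j=0}^{N}(-1)^j\binom{N}{j}\binom{j+q}{\ell}=(-1)^N\binom{q}{\ell-N}$, the resulting shift formula $\sum_{j=0}^{N}(-1)^j\binom{N}{j}s_{j+q}=\sum_{i=0}^{q}(-1)^i\binom{q}{i}\sigma_{N+i}$, and the final collapse of the double $(p,i)$-sum to $\sum_{p=0}^{m}(-1)^p\binom{m}{p}(\sigma_{n+p+r}-\sigma_{p+r})$. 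I checked these steps (the interchanges, the $\Delta^N$ computations, the re-indexings, and the restoration of the $j=0$ term) and they are all sound, and your identification of the three displayed special cases as $m=0$, $r=0$, and $m=r=0$ is right, including \eqref{lzb3fn0}. What your route buys is a self-contained proof that does not depend on the unpublished preprint; what the paper's route buys is brevity, at the cost of leaning on external statements not reproduced in the text — in effect you have supplied the content of the cited theorems yourself.
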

\begin{proof}
Set $a_k = (- 1)^k \binom{n}{k}(k + 1)^{- 1}$ in~\eqref{main} and apply~\cite[Theorems 6.3 and 7.9]{Adegoke0}.
\end{proof}

\begin{proposition}
If $n$ is a non-negative integer and $x$ is a complex number, then
\begin{equation}\label{fh8f890}
\sum_{k = 1}^n \sum_{j = 0}^{k - 1} \frac{(-1)^j}{n - j + 1} \binom{n}{j} x^{k-j} H_{k - j} = \frac{(- 1)^{n}}{n+1} 
\left ( (1-x)^n H_n - \sum_{k=1}^n \frac{(1-x)^{n-k}}{k} \right ).
\end{equation}
In particular,
\begin{equation}
\sum_{k = 1}^n \sum_{j = 0}^{k - 1} \frac{(-1)^j}{n - j + 1} \binom{n}{j} H_{k - j} = \frac{(- 1)^{n+1}}{n(n+1)}. 
\end{equation}
\end{proposition}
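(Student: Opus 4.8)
The plan is to obtain \eqref{fh8f890} as the instance of \eqref{lzb3fn0} corresponding to the sequence $s_m = x^m H_m$, and then to rewrite the associated binomial transform $\sigma_n$ by means of the Knuth--Boyadzhiev identity \eqref{KB_id}.

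First I would fix the complex parameter $x$ and set $s_m = x^m H_m$ for $m \ge 0$, so that $s_0 = 0$ because $H_0 = 0$. Defining $\sigma_n = \sum_{k=0}^n \binom{n}{k}(-1)^k s_k$ makes $(s_k)$ and $(\sigma_k)$ a binomial transform pair, since the signed binomial transform is an involution; hence \eqref{lzb3fn0} applies verbatim and gives
\[
\sum_{k = 1}^n \sum_{j = 0}^{k - 1} \frac{(-1)^j}{n - j + 1}\binom{n}{j} x^{k-j} H_{k-j} = \frac{(-1)^n}{n+1}\bigl(\sigma_n - \sigma_0\bigr),
\]
and here $\sigma_0 = s_0 = 0$, so all that remains is to identify $\sigma_n$.

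Next I would compute $\sigma_n = \sum_{k=0}^n \binom{n}{k}(-1)^k x^k H_k = \sum_{k=1}^n \binom{n}{k}(-x)^k H_k$, the $k=0$ term dropping out. Replacing $x$ by $-x$ in \eqref{KB_id} --- which is legitimate, \eqref{KB_id} being a polynomial identity in $x$ --- turns its right-hand side into $(1-x)^n H_n - \sum_{k=1}^n (1-x)^{n-k}/k$, so $\sigma_n$ equals precisely the bracketed expression on the right of \eqref{fh8f890}. Substituting this back into the displayed formula yields the main identity. Finally, putting $x = 1$ (with $n \ge 1$) kills $(1-x)^n$ and every term of $\sum_{k=1}^n (1-x)^{n-k}/k$ except the $k=n$ term, which contributes $1/n$, so the right-hand side collapses to $\tfrac{(-1)^n}{n+1}\bigl(0 - \tfrac1n\bigr) = \tfrac{(-1)^{n+1}}{n(n+1)}$, the stated particular case.

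There is essentially no real obstacle here: the only points requiring a moment's care are that $\sigma_0$ vanishes (because $H_0 = 0$) and that the substitution $x \mapsto -x$ in \eqref{KB_id} is valid. If one preferred to avoid invoking \eqref{lzb3fn0} as a black box, the same conclusion would follow from \eqref{main} with $a_k = (-1)^k\binom{n}{k}/(k+1)$ and $b_m = x^m H_m$, after checking $a_{n-j} = (-1)^n\,(-1)^j\binom{n}{j}/(n-j+1)$; but this route would then require re-deriving the evaluation $\sum_{k=1}^n \frac{(-1)^k\binom{n}{k}}{k+1}\sum_{j=1}^k x^j H_j = \frac{1}{n+1}\sigma_n$, which is exactly the content already packaged in \eqref{lzb3fn0}.
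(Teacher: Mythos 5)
Your proof is correct and follows essentially the same route as the paper: both apply \eqref{lzb3fn0} to the binomial transform pair $s_n = x^n H_n$, $\sigma_n = (1-x)^n H_n - \sum_{k=1}^n (1-x)^{n-k}/k$, obtained from the Knuth--Boyadzhiev identity \eqref{KB_id} via $x\mapsto -x$. Your added checks (that $\sigma_0=0$ and the $x=1$ specialization giving the particular case) are exactly the routine details the paper leaves implicit.
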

\begin{proof}
Work in~\eqref{lzb3fn0} with the binomial transform pair 
\begin{equation}
s_n = x^n H_n \quad\text{and}\quad \sigma_n = (1-x)^n H_n - \sum_{k=1}^n \frac{(1-x)^{n-k}}{k},
\end{equation}
which follows from the Knuth-Boyadzhiev identity \eqref{KB_id}.
\end{proof}

\begin{proposition}
If $n$ is a positive integer and $r$ is a non-negative integer such that $r<n$, then
\begin{equation}
\sum_{k = 1}^n {\sum_{j = 0}^k {\frac{{( - 1)^j }}{{n - j + 1}}\binom{{n}}{j}\left( {k - j} \right)^r H_{k - j} } }  = \frac{{( - 1)^{n + 1} }}{{n + 1}}\sum_{k = 0}^r {\frac{{( - 1)^k k!}}{{n - k}}\braces{{ r}}{k}} .
\end{equation}
\end{proposition}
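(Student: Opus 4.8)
The plan is to obtain this identity as the special case $x=1$ of the preceding Proposition (equation~\eqref{fh8f890}), after first generalizing that proposition to carry an extra polynomial factor $(k-j)^r$. Concretely, I would start from~\eqref{lzb3fn0} and feed in the binomial transform pair coming from the Knuth--Boyadzhiev identity, but with $s_n$ replaced by $n^r x^n H_n$. The partner sequence $\sigma_n$ is then the binomial transform of $n^r x^n H_n$, which one computes by applying the operator $x\,\frac{d}{dx}$ a total of $r$ times to the generating relation behind~\eqref{KB_id}; each application brings down a factor $k$ inside the sum on the left and correspondingly differentiates $(1-x)^n H_n - \sum_{k=1}^n (1-x)^{n-k}/k$ on the right. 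This yields
\begin{equation*}
\sum_{k = 1}^n \sum_{j = 0}^{k - 1} \frac{(-1)^j}{n - j + 1} \binom{n}{j} (k-j)^r x^{k-j} H_{k - j} = \frac{(- 1)^{n}}{n+1}\,\Phi_r(x),
\end{equation*}
where $\Phi_r(x)$ is the $r$-fold action of $x\,d/dx$ on $(1-x)^n H_n - \sum_{k=1}^n (1-x)^{n-k}/k$.

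The second step is to evaluate $\Phi_r(1)$. Since $r<n$, every term $(1-x)^n H_n$ and its first $r$ derivatives vanish at $x=1$, so only the sum $-\sum_{k=1}^n (1-x)^{n-k}/k$ contributes. Writing $u=1-x$, so that $x\,d/dx = (u-1)(-d/du)$, or more simply differentiating $(1-x)^{n-k}$ directly, one finds that at $x=1$ only the terms with $n-k \le r$ survive and, among those, the repeated application of $x\,d/dx$ produces a combinatorial coefficient. Collecting these contributions should produce exactly $\sum_{k=0}^r \frac{(-1)^k k!}{n-k}\braces{r}{k}$ up to the sign and the factor $(n+1)^{-1}$; here I would invoke the Stirling-number evaluation of the finite-difference/derivative action, precisely the identity
$\left.\frac{d^m}{dx^m}\left(1-e^x\right)^p\right|_{x=0} = (-1)^p p!\,\braces{m}{p}$
already used in Proposition~\ref{b6x4u19}, after the substitution that turns $(1-x)^{n-k}$ into an exponential form. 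This bookkeeping of which derivative order hits which power of $(1-x)$ is where the argument is most delicate.

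An alternative, and probably cleaner, route is to differentiate~\eqref{fh8f890} itself: apply $\left(x\frac{d}{dx}\right)^r$ to both sides of~\eqref{fh8f890} and then set $x=1$. On the left this directly inserts the factor $(k-j)^r$ (note $x^{k-j}\to 1$ at $x=1$), matching the claimed left-hand side; on the right one must compute $\left.\left(x\frac{d}{dx}\right)^r\left((1-x)^n H_n - \sum_{k=1}^n (1-x)^{n-k}/k\right)\right|_{x=1}$, which is the same computation as above. Either way, the main obstacle is the explicit evaluation of this $r$-th derivative at $x=1$: one must show it equals $-\sum_{k=0}^r \frac{(-1)^k k!}{n-k}\braces{r}{k}$, and the hypothesis $r<n$ is exactly what guarantees the $(1-x)^n H_n$ piece and the ``large $k$'' tail of the sum drop out so that the answer is this finite, clean expression. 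Once that derivative is identified, combining with the factor $(-1)^n/(n+1)$ and absorbing a sign gives the stated formula.
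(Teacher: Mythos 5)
Your ``alternative route'' is exactly the paper's proof: writing $\exp t$ for $x$ in~\eqref{fh8f890} and differentiating $r$ times at $t=0$ is the same as applying $\left(x\frac{d}{dx}\right)^r$ and setting $x=1$, with the $(1-x)^nH_n$ term killed by $r<n$ and the remaining sum evaluated via the Stirling identity $\left.\frac{d^m}{dx^m}(1-e^x)^p\right|_{x=0}=(-1)^pp!\braces{m}{p}$ from Proposition~\ref{b6x4u19}, after which the re-indexing $k\mapsto n-k$ gives the stated right-hand side. So your proposal is correct and essentially the same approach as the paper; the only loose end is carrying out that last routine evaluation explicitly (and noting the $j=k$ term vanishes since $H_0=0$).
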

\begin{proof}
Write $\exp x$ for $x$ in~\eqref{fh8f890} and proceed as in the proof of Proposition~\ref{b6x4u19}.
\end{proof}
\begin{proposition}
If $n$ is a non-negative integer, then
\begin{align}
&\sum_{k = 1}^n {\sum_{j = 0}^{k - 1} {( - 1)^j 2^{2j - 2k} \frac{{2(k - j) + 1}}{{(n - j + 1)(k - j + 1)}}\binom{{n}}{j}C_{k - j} \left( {H_{k - j + 1}  - O_{k - j + 1} } \right)} }\nonumber\\
&\qquad  = ( - 1)^{n + 1} 2^{ - 2n} \frac{{2n + 1}}{{\left( {n + 1} \right)^2 }}C_n \left( {H_{n + 1} - O_{n + 1} } \right).
\end{align}
\end{proposition}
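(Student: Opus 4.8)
The plan is to read the stated identity directly off of~\eqref{lzb3fn0} for a carefully chosen binomial transform pair. Put
\[
s_m := 2^{-2m}\,\frac{2m+1}{m+1}\,C_m\,(H_{m+1}-O_{m+1}),\qquad \sigma_m := -\,s_m \qquad(m\ge 0).
\]
Because $H_1=O_1=1$ we have $s_0=\sigma_0=0$, so $\sigma_0=s_0$ as a binomial transform pair requires; granting for the moment that $\{s_m,\sigma_m\}$ really is a binomial transform pair, identity~\eqref{lzb3fn0} gives at once
\[
\sum_{k=1}^n\sum_{j=0}^{k-1}\frac{(-1)^j}{n-j+1}\binom nj\,s_{k-j}
=\frac{(-1)^n}{n+1}\,(\sigma_n-\sigma_0)=\frac{(-1)^{n+1}}{n+1}\,s_n .
\]
Now the inner summand on the left is exactly $s_{k-j}=2^{2j-2k}\frac{2(k-j)+1}{k-j+1}C_{k-j}(H_{k-j+1}-O_{k-j+1})$ (set $m=k-j$), while $\frac{(-1)^{n+1}}{n+1}s_n=(-1)^{n+1}2^{-2n}\frac{2n+1}{(n+1)^2}C_n(H_{n+1}-O_{n+1})$, which is precisely the asserted right-hand side. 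So the whole statement reduces to verifying the binomial transform pair.

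The one substantive point is therefore to prove
\[
\sum_{k=0}^n\binom nk(-1)^k\,2^{-2k}\frac{2k+1}{k+1}C_k(H_{k+1}-O_{k+1})=-\,2^{-2n}\frac{2n+1}{n+1}C_n(H_{n+1}-O_{n+1});
\]
the reverse transform direction is then automatic, since the signed binomial transform is an involution. I would establish this by a generating-function argument: the ordinary generating function of $2^{-2k}\binom{2k}{k}$ is $(1-x)^{-1/2}$, which is a fixed point of the signed binomial transform $f(x)\mapsto\frac1{1-x}f\!\bigl(\tfrac{-x}{1-x}\bigr)$, and, writing $2^{-2k}\tfrac{2k+1}{k+1}C_k=\tfrac{2k+1}{(k+1)^2}2^{-2k}\binom{2k}{k}$, the additional factors $\tfrac{2k+1}{(k+1)^2}$ and $H_{k+1}-O_{k+1}$ are produced by an integration and by a parameter-derivative, at the relevant special values, of a $\Gamma$-parametrized family of the shape $\Gamma(k+c)\big/\bigl(\Gamma(k+1+b)\,k!\bigr)$ specializing to $2^{-2k}\binom{2k}{k}$; tracking how each such operation interacts with the involution yields the overall sign $-1$. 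Alternatively, this finite sum is of exactly the type of the $\bigl(2^{-2k}\binom{2k}{k},\,O_k,\,H_k\bigr)$ identities already invoked above, and one may simply cite it from~\cite{batir21}.

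Granting the pair, the remainder is bookkeeping: substitute into~\eqref{lzb3fn0}, re-index with $m=k-j$, and collect the powers of $2$ together with the factor $\tfrac{2n+1}{(n+1)^2}C_n$; the only points needing attention are the innocuous $2^{2j-2k}=2^{-2m}$ and $\tfrac{(-1)^n}{n+1}\cdot(-1)=(-1)^{n+1}\cdot\tfrac1{n+1}$. Thus the main obstacle is concentrated entirely in establishing that $\{s_m,-s_m\}$ is a binomial transform pair; everything downstream is a direct appeal to~\eqref{lzb3fn0}.
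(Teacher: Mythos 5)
Your proposal is correct and is essentially the paper's own proof: the paper handles your ``one substantive point'' simply by citing the identity $\sum_{k=0}^n(-1)^k\binom nk 2^{-2k}\tfrac{2k+1}{k+1}C_k(H_{k+1}-O_{k+1})=-2^{-2n}\tfrac{2n+1}{n+1}C_n(H_{n+1}-O_{n+1})$ from \cite[Prop.~13.22]{Adegoke0} (not \cite{batir21}) and substituting it into~\eqref{lzb3fn0}, exactly your reduction with the pair $\{s_m,-s_m\}$. Apart from that citation replacing your sketched generating-function justification, the arguments coincide.
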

\begin{proof}
Use the following known result (\cite[Prop. 13.22]{Adegoke0}):
\begin{equation}
\sum_{k = 0}^n {( - 1)^k \binom{{n}}{k}2^{ - 2k} \frac{{2k + 1}}{{k + 1}}C_k \left( {H_{k + 1} - O_{k + 1} } \right)} = - 2^{ - 2n} \frac{{2n + 1}}{{n + 1}}C_n \left( {H_{n + 1} - O_{n + 1} } \right)
\end{equation}
in~\eqref{lzb3fn0}.
\end{proof}

\begin{proposition}
If $n$ is a non-negative integer, then
\begin{equation}
\sum_{k = 1}^n {( - 1)^k \sum_{j = 0}^{k - 1} {\frac{{B_{k - j + 1} }}{{\left( {k - j + 1} \right)\left( {n - j + 1} \right)}}\binom{{n}}{j}} }  = \frac{{( - 1)^n }}{{\left( {n + 1} \right)^2 }}\left( {( - 1)^{n + 1} B_{n + 1}  + \frac{{n - 1}}{2}} \right).
\end{equation}
\end{proposition}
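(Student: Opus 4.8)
The plan is to read the left-hand side as an instance of identity~\eqref{lzb3fn0} applied to the binomial transform pair generated by $s_n = (-1)^n B_{n+1}/(n+1)$. With this choice, for $1\le k\le n$ and $0\le j\le k-1$ one has $k-j\ge 1$ and
$$(-1)^j s_{k-j} = (-1)^j(-1)^{k-j}\frac{B_{k-j+1}}{k-j+1} = (-1)^k\frac{B_{k-j+1}}{k-j+1},$$
so the double sum on the left of~\eqref{lzb3fn0} is precisely the double sum in the proposition, with the factor $(-1)^k$ pulled outside the inner sum. It therefore remains only to identify the binomial transform $(\sigma_n)$ of $(s_n)$ and to simplify the right-hand side $\frac{(-1)^n}{n+1}(\sigma_n-\sigma_0)$ of~\eqref{lzb3fn0}.

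The computation of $\sigma_n$ is the one step that needs a little work. By definition,
$$\sigma_n = \sum_{k=0}^n\binom{n}{k}(-1)^k s_k = \sum_{k=0}^n\binom{n}{k}\frac{B_{k+1}}{k+1},$$
and the absorption identity $\frac{n+1}{k+1}\binom{n}{k}=\binom{n+1}{k+1}$ gives $(n+1)\sigma_n = \sum_{k=0}^n\binom{n+1}{k+1}B_{k+1} = \sum_{m=1}^{n+1}\binom{n+1}{m}B_m$. Applying the classical reflection formula $\sum_{m=0}^{N}\binom{N}{m}B_m=(-1)^NB_N$ (with the convention $B_1=-\frac12$ used throughout the paper) and subtracting the $m=0$ term $B_0=1$ yields
$$\sigma_n = \frac{(-1)^{n+1}B_{n+1}-1}{n+1},\qquad n\ge 0,$$
and in particular $\sigma_0=B_1=-\frac12$.

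Substituting these into~\eqref{lzb3fn0} finishes the proof:
$$\frac{(-1)^n}{n+1}\bigl(\sigma_n-\sigma_0\bigr) = \frac{(-1)^n}{n+1}\left(\frac{(-1)^{n+1}B_{n+1}-1}{n+1}+\frac12\right) = \frac{(-1)^n}{(n+1)^2}\left((-1)^{n+1}B_{n+1}+\frac{n-1}{2}\right),$$
which is the asserted right-hand side (the case $n=0$ being trivially $0=0$). There is no genuine obstacle here: the whole argument is bookkeeping once the pair $(s_n),(\sigma_n)$ has been recognized, the only points deserving attention being the sign juggling in the last display and the consistency of the Bernoulli-number convention when the reflection formula is invoked.
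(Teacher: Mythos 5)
Your proposal is correct and follows essentially the same route as the paper: both apply \eqref{lzb3fn0} to $s_k=(-1)^kB_{k+1}/(k+1)$, whose signed binomial transform is $\sigma_n=(-1)^{n+1}B_{n+1}/(n+1)-1/(n+1)$, and then simplify $\frac{(-1)^n}{n+1}(\sigma_n-\sigma_0)$. The only difference is that the paper cites this closed form for $\sigma_n$ from the literature, while you rederive it via absorption and the reflection identity $\sum_{m=0}^{N}\binom{N}{m}B_m=(-1)^NB_N$ (valid with $B_1=-\tfrac12$), which is a correct, self-contained substitute.
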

\begin{proof}
Use the identity (\cite[Prop. 13.33]{Adegoke0}):
\begin{equation}
\sum_{k = 0}^n {\binom{{n}}{k}\frac{{B_{k + 1} }}{{k + 1}}}  = ( - 1)^{n + 1} \frac{{B_{n + 1} }}{{n + 1}} - \frac{1}{{n + 1}}
\end{equation}
in~\eqref{lzb3fn0}.
\end{proof}
\begin{proposition}
If $n$ is a non-negative integer, then
\begin{equation}
\sum_{k = 1}^n {\sum_{j = 0}^{k - 1} {\frac{{( - 1)^j }}{{n - j + 1}}\binom{{n}}{j}\binom{{2\left( {k - j} \right)}}{{k - j}}} }  = \frac{1}{{n + 1}}\left( {\sum_{k = 0}^{\left\lfloor {n/2} \right\rfloor } {\binom{{n}}{k}\binom{{n - k}}{k}}  - ( - 1)^n } \right).
\end{equation}
\end{proposition}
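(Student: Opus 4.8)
The plan is to read the left-hand side as an instance of~\eqref{lzb3fn0} and to supply the matching binomial transform partner. Take $s_m=\binom{2m}{m}$ for $m\ge 0$; then the inner double sum is precisely $\sum_{k=1}^n\sum_{j=0}^{k-1}\frac{(-1)^j}{n-j+1}\binom{n}{j}s_{k-j}$, so by~\eqref{lzb3fn0} its value is $\frac{(-1)^n}{n+1}(\sigma_n-\sigma_0)$, where $\sigma_n=\sum_{k=0}^n\binom{n}{k}(-1)^k\binom{2k}{k}$.

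The substantive step is to evaluate $\sigma_n$ in closed form. I would do this with generating functions: since $\sum_{m\ge0}\binom{2m}{m}x^m=(1-4x)^{-1/2}$ and the signed binomial transform acts by $f(x)\mapsto \frac{1}{1-x}f\!\left(\frac{-x}{1-x}\right)$, the ordinary generating function of $(\sigma_n)$ is
\[
\frac{1}{1-x}\left(1+\frac{4x}{1-x}\right)^{-1/2}=\bigl((1-x)(1+3x)\bigr)^{-1/2}=(1+2x-3x^2)^{-1/2}.
\]
This is $(1-2x-3x^2)^{-1/2}$ with $x\mapsto -x$, and $(1-2x-3x^2)^{-1/2}=\sum_{n\ge0}T_nx^n$ with $T_n=\sum_{k=0}^{\lfloor n/2\rfloor}\binom{n}{k}\binom{n-k}{k}$ the central trinomial coefficient (the coefficient of $x^n$ in $(1+x+x^2)^n$). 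Hence $\sigma_n=(-1)^nT_n=(-1)^n\sum_{k=0}^{\lfloor n/2\rfloor}\binom{n}{k}\binom{n-k}{k}$. Equivalently, one can sidestep generating functions by checking the inverse relation $\binom{2n}{n}=\sum_{k=0}^n\binom{n}{k}T_k$, which reduces to the algebraic identity $(1-x)^2-2x(1-x)-3x^2=1-4x$, and then appeal to the involutivity of the signed binomial transform to conclude that $\{s_n,\sigma_n\}=\{\binom{2n}{n},(-1)^nT_n\}$ is a binomial transform pair in the paper's sense.

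With $\sigma_0=T_0=1$, plugging into~\eqref{lzb3fn0} gives
\[
\sum_{k=1}^n\sum_{j=0}^{k-1}\frac{(-1)^j}{n-j+1}\binom{n}{j}\binom{2(k-j)}{k-j}=\frac{(-1)^n}{n+1}\bigl((-1)^nT_n-1\bigr)=\frac{1}{n+1}\bigl(T_n-(-1)^n\bigr),
\]
which is the claimed identity after writing out $T_n=\sum_{k=0}^{\lfloor n/2\rfloor}\binom{n}{k}\binom{n-k}{k}$. The only real obstacle is the determination of $\sigma_n$, i.e.\ recognizing that the signed binomial transform of the central binomial coefficients is (up to sign) the central trinomial sequence; once that evaluation is in hand, the rest is a routine specialization of~\eqref{lzb3fn0}.
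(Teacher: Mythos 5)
Your proposal is correct and follows essentially the same route as the paper: specialize~\eqref{lzb3fn0} to $s_m=\binom{2m}{m}$ and use the evaluation $\sum_{k=0}^n(-1)^k\binom{n}{k}\binom{2k}{k}=(-1)^n\sum_{k=0}^{\lfloor n/2\rfloor}\binom{n}{k}\binom{n-k}{k}$. The only difference is that the paper imports this evaluation by citing \cite[Prop.~6]{Adegoke1}, whereas you prove it yourself (correctly) via the generating-function form of the signed binomial transform, $(1-4x)^{-1/2}\mapsto\bigl((1-x)(1+3x)\bigr)^{-1/2}$, which makes your argument self-contained.
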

\begin{proof}
It is known that~\cite[Prop. 6]{Adegoke1}:
\begin{equation}
\sum_{k = 0}^n {( - 1)^k \binom{{n}}{k}\binom{{2k}}{k}}  = ( - 1)^n \sum_{k = 0}^{\left\lfloor {n/2} \right\rfloor } {\binom{{n}}{k}\binom{{n - k}}{k}} .
\end{equation}
Use this result in~\eqref{lzb3fn0}.
\end{proof}

\begin{proposition}
If $n$ is a non-negative integer, then
\begin{equation}
\sum_{k = 1}^n {\sum_{j = 0}^{k - 1} {\frac{{( - 1)^j }}{{n - j + 1}}\binom{{n}}{j}C_{k - j + 1} } }  = \frac{1}{{n + 1}}\left( {\sum_{k = 0}^{\left\lfloor {n/2} \right\rfloor } {\binom{{n}}{{2k}}C_k }  - ( - 1)^n } \right).
\end{equation}
\end{proposition}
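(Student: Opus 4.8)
The plan is to deploy the master identity \eqref{lzb3fn0} once more, this time with the binomial transform pair
\[
s_n = C_{n+1}, \qquad \sigma_n = (-1)^n \sum_{k=0}^{\lfloor n/2 \rfloor} \binom{n}{2k} C_k .
\]
For this to be legitimate one must check that these two sequences are indeed related by the defining relations of a binomial transform pair; equivalently, one must establish the identity
\[
\sum_{k=0}^n (-1)^k \binom{n}{k} C_{k+1} = (-1)^n \sum_{k=0}^{\lfloor n/2 \rfloor} \binom{n}{2k} C_k .
\]
This is the classical statement that the shifted Catalan numbers $C_{n+1}$ form the ordinary binomial transform of the Motzkin numbers $M_n = \sum_{k} \binom{n}{2k} C_k$, and it is structurally identical to the central binomial identity invoked in the preceding proposition; accordingly I would simply cite it (as in \cite{Adegoke1}). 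Should a self-contained argument be preferred, it follows from a short generating-function computation: writing $\mathcal{C}(x) = \sum_{n \ge 0} C_n x^n$ and $\mathcal{M}(x) = \sum_{n \ge 0} M_n x^n$, the binomial transform relation $\sum_n \big( \sum_k \binom{n}{k} M_k \big) x^n = \frac{1}{1-x}\,\mathcal{M}\!\left(\frac{x}{1-x}\right)$ is shown to coincide with $\big( \mathcal{C}(x) - 1 \big)/x = \sum_{n \ge 0} C_{n+1} x^n$ by using the functional equations $x\,\mathcal{C}(x)^2 - \mathcal{C}(x) + 1 = 0$ and $x^2\,\mathcal{M}(x)^2 + (x-1)\,\mathcal{M}(x) + 1 = 0$.

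With the transform pair in hand the remaining step is routine bookkeeping. Since $\sigma_0 = C_1 = 1$, equation \eqref{lzb3fn0} gives
\[
\sum_{k = 1}^n \sum_{j = 0}^{k-1} \frac{(-1)^j}{n-j+1} \binom{n}{j} C_{k-j+1} = \frac{(-1)^n}{n+1}\big( \sigma_n - \sigma_0 \big) = \frac{1}{n+1}\left( (-1)^n \sigma_n - (-1)^n \right),
\]
and substituting $(-1)^n \sigma_n = \sum_{k=0}^{\lfloor n/2 \rfloor} \binom{n}{2k} C_k$ yields exactly the asserted formula.

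The only genuine obstacle is the Catalan--Motzkin binomial transform identity; but since it is standard and the paper has just used a completely parallel identity for $\binom{2k}{k}$, this step costs essentially nothing. Everything else is a direct specialization of the earlier theorem.
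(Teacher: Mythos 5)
Your proposal is correct and follows essentially the same route as the paper: it specializes \eqref{lzb3fn0} to the pair $s_n=C_{n+1}$, $\sigma_n=(-1)^n\sum_{k}\binom{n}{2k}C_k$, whose validity is exactly the cited identity $\sum_{k=0}^n(-1)^k\binom{n}{k}C_{k+1}=(-1)^n\sum_{k=0}^{\lfloor n/2\rfloor}\binom{n}{2k}C_k$ from \cite{Adegoke1}, and the bookkeeping with $\sigma_0=1$ matches the paper's. Your optional generating-function verification of that Catalan--Motzkin transform is a nice self-contained supplement but does not change the argument.
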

\begin{proof}
Use the result~\cite[Prop. 6]{Adegoke1}:
\begin{equation}
\sum_{k = 0}^n {( - 1)^k \binom{{n}}{k}C_{k + 1} }  = ( - 1)^n \sum_{k = 0}^{\left\lfloor {n/2} \right\rfloor } {\binom{{n}}{{2k}}C_k } 
\end{equation}
in~\eqref{lzb3fn0}.
\end{proof}


\begin{thebibliography}{99}

\bibitem{Adegoke1} K. Adegoke, A short proof of Knuth's old sum, manuscript, (2024).

\bibitem{Adegoke0}
K. Adegoke, Binomial transforms and the binomial convolution of sequences, Preprint, (2025). arXiv:2507.04179[math.CO].

\bibitem{Adegoke}
K. Adegoke and R. Frontczak, Three combinatorial sums involving central binomial coefficients and harmonic numbers, Preprint (2025), Under review.

\bibitem{batir18} 
N. Bat\i r, Remarks on V\u alean's master theorem of series, \emph{J. Class. Anal.} {\bf 13} (2018), 79--82.

\bibitem{batir21} N. Bat\i r, Finite binomial sum identities with harmonic numbers, \emph{J. Integer Seq.} {\bf 24} (2021), Article 21.4.3.

\bibitem{choi11} J. Choi and H. M. Srivastava, Summation formulas involving harmonic numbers and generalized harmonic numbers, \emph{Math. Comput. Modelling} {\bf 54} (2011), 2220--2234.

\bibitem{dixon03} A. C. Dixon, Summation of a certain series, \emph{Proceedings of the London Mathematical Society} {\bf 35} (1903), 284--289.

\bibitem{graham} R. L. Graham, D. E. Knuth and O. Patashnik, \emph{Concrete Mathematics: A Foundation for Computer Science}, Addison-Wesley, (1994).

\bibitem{guo15} B. Guo and F. Qi, An explicit formula for Bernoulli numbers in terms of Stirling numbers of the second kind, \emph{J. Ana. Num. Theor.} {\bf 3}:1 (2015), 27--30.

\bibitem{JinSun}
H.-T. Jin and L. H. Sun, On Spie\ss's conjecture on harmonic numbers, \emph{Discrete Appl. Math.} {\bf 161} (2013), 2038--2041.

\bibitem{kellner14} B. C. Kellner, Identities between polynomials related to Stirling and harmonic numbers, {\it Integers} {\bf 14} (2014), \#A54.

\bibitem{sofo14} A. Sofo and S. Srivastava, A family of shifted harmonic sums, \emph{Ramanunjan Journal} {\bf } (2014), \url{DOI 10.1007/s11139-014-9600-9}.

\bibitem{spiess90} 
J. Spie\ss, Some identities involving harmonic numbers, \emph{ Math. Comp.} {\bf 55} (1990), 839--863.






\end{thebibliography}
\end{document}